\newcommand{\dst}{\displaystyle}
\DeclareSymbolFontAlphabet{\amsmathbb}{AMSb}
\newcommand*\shadedbox[1]{%
\colorbox{shadebrushcolor}{\hspace{1em}#1\hspace{1em}}}
{\endEmphEqMainEnv}
\newcommand{\solEps}				{u^\varepsilon}
\newcommand{\solShift}				{\tilde{u}}
\newcommand{\modelOpShift}			{\tilde{A}} 
\newcommand{\modelFormShift}			{\tilde{a}} 
\newtheorem{theorem}{Theorem}[section]
\newtheorem{lemma}[theorem]{Lemma}
\newtheorem{proposition}{Proposition}[section]
\theoremstyle{definition}
\newtheorem{remark}{Remark}
\newcommand{\ep}{\varepsilon}
\newcommand{\f}{\frac}
\definecolor{mygreen}{rgb}{0,0,0}
\definecolor{myorange}{rgb}{0,0,0}
\definecolor{myred}{rgb}{0,0,0}
\definecolor{myblue}{rgb}{0,0,0}
\newcommand{\inserted}[1]{{\color{black}#1}} 
\def\t{\tilde}
\newcommand{\TM}{T}
\newcommand{\VI}{\mathcal{V}_\infty}
\newcommand{\tT}{\t\TM}
\newcommand{\LM}{L}
\newcommand{\uin}{\sol^0}
\newcommand{\tO}{\t\Omega}
\newcommand{\norme}[1]{ {\big\lVert  #1\big\rVert}}
\newcommand{\initNoise}				{\zeta}
\DeclareMathOperator*{\argmin}{argmin}
\newcommand{\dsp}					{\displaystyle}
\newcommand{\R}						{\mathbbm{R}}
\newcommand{\N}						{\mathbbm{N}}
\newcommand{\Mtot}{M_{\mathrm{tot}}}
\newcommand{\eps}					{\varepsilon}
\newcommand{\vareps}				{\varepsilon}
\newcommand{\abs}[1]				{\left| #1 \right|}
\newcommand{\norm}[1]				{\left\|#1\right\|}  
\newcommand{\diff}					{\mathrm{d}}     
\newcommand{\deriv}[1]				{\frac{\mathrm{d}}{\mathrm{d}#1}}
\newcommand{\Cst}					{C^{\text{st}}}
\newcommand{\p}						{\partial}
\newcommand{\sol}					{u}
\newcommand{\solF}					{u}
\newcommand{\solFinf}				{u_{\infty}}
\newcommand{\solS}					{u^{\varepsilon}}
\newcommand{\solSinf}				{u^{\varepsilon}_{\infty}}
\newcommand{\Ltwo}[1][0,L]{\mathrm{L}^2(#1)}
\newcommand{\Hone}[1][0,L]{\mathrm{H}^1(#1)}
\newcommand{\HoneR}[1][0,L]{\mathrm{H}^1_R(#1)}
\newcommand{\Htwo}[1][0,L]{\mathrm{H}^2(#1)}
\newcommand{\Hthree}[1][0,L]{\mathrm{H}^3(#1)}
\newcommand{\Czero}[1][0,T]{\mathrm{C}^0(#1)}
\newcommand{\Cone}[1][0,T]{\mathrm{C}^1(#1)}
\newcommand{\state}					{z}
\newcommand{\stateDof}					{\mathrm{\state}}
\newcommand{\stateI}				{z_\infty}
\newcommand{\observ}				{y}
\newcommand{\adjoint}				{q}
\newcommand{\stateSpaceI}				{\mathcal{Z_\infty}}                                	
\newcommand{\stateSpace}				{\mathcal{Z}}
\newcommand{\Id}					{\mathrm{Id}}                              	
\newcommand{\modelOp}				{A} 
\newcommand{\modelOpI}				{A_\infty} 
\newcommand{\obsOp}					{C}
\newcommand{\obsOpDof}				{\mathrm{\obsOp}}
\newcommand{\initState}				{\zeta}                                	
\newcommand{\criter}				{\mathscr{J}}
\newcommand{\T}{T}
\newcommand{\cov}{\varPi}
\newcommand{\covDof}{\Pi}
\title[Asymptotics in inverse problems for depolymerization] 
      {Asymptotic approaches in inverse problems for depolymerization estimation}
\author[Marie Doumic and Philippe Moireau]{}
 \keywords{Depolymerisation; Becker-D\"oring system; asymptotic models; transparent boundary condition; Fokker-Planck equation; observability inequality; Carleman inequalities; error estimates; Tikhonov regularisation; Kalman filtering}
 \email{marie.doumic@inria.fr}
 \email{philippe.moireau@inria.fr}
\date{\today}
\thanks{$^*$ Corresponding author: Philippe Moireau}
\begin{document}
\begin{abstract}

Depolymerization reactions constitute frequent experiments, for instance in biochemistry for the study of amyloid fibrils. The quantities experimentally observed are related to the time dynamics of a quantity averaged over all polymer sizes, such as the total polymerised mass or the mean size of particles. The question analysed here is to link this measurement to the initial size distribution. To do so, we first derive, from the initial reaction system
\[ \mathcal{C}_i \stackrel{b}{\longrightarrow} \mathcal{C}_{i-1} + \mathcal{C}_1,\qquad i\geq i_0\geq 2,\]
two asymptotic models: at first order, a backward transport equation, and at second order, an advection-diffusion/Fokker-Planck equation complemented with a mixed boundary condition at $x=0$. We estimate their distance to the original system solution. We then turn to the inverse problem, {\it i.e.}, how to estimate the initial size distribution from the time measurement of an average quantity, given by a moment of the solution. This question has been  studied in~\cite{armiento:2016} for the first order asymptotic model, and we analyse here the second order asymptotic. Thanks to Carleman inequalities and to log-convexity estimates, we prove observability results and error estimates for a Tikhonov regularization. We then develop a Kalman-based observer approach, and implement it on simulated observations. Despite its severely ill-posed character, the second-order approach appears numerically  more accurate than the first-order one.

\end{abstract}

\maketitle

\medskip
\centerline{\scshape Marie Doumic}
\medskip
{\footnotesize
\centerline{Inria \& Sorbonne Universit\'e}
\centerline{Team MAMBA, Inria Paris, 2 rue Simone Iff}
\centerline{Paris, 75012, France}
} 

\medskip

\centerline{\scshape Philippe Moireau$^*$}
\medskip
{\footnotesize
\centerline{Inria -- LMS,Ecole Polytechnique, CNRS, Institut Polytechnique de Paris}
\centerline{Team M$\sf{\Xi}$DISIM, Inria Saclay - Ile-de-France, Turing Building}
\centerline{Palaiseau, 91128, France}
}

\bigskip


\centerline{\today}


\tableofcontents


\section{Introduction}

The problem under study has been first inspired by experiments carried out for the PrP protein, responsible of Creutzfeldt-Jakob's disease, where depolymerisation has been isolated from other possible reactions in order to study the (in)stability of the polymers~\cite{armiento:2016}. This kind of experiment may be encountered in many other application fields, as for instance actin filaments~\cite{Bendes_2022}, lyzosyme amyloids~\cite{Bellova_2010}, thermoplastics~\cite{Van_2007}, polyethylene-terephthalate (PET) depolymerisation  for its recycling~\cite{Karpati_2019}, etc. The experimental protocol considered here consists in subjecting polymers of different sizes  to depolymerisation conditions - for instance by stirring - and observing their size decay through the temporal measurement of averaged quantities, such as the total polymerised mass (related to the first-order moment of the solution) or the average size of polymers (related to the second-order moment of the solution). In this article, we address the issue of estimating the initial size distribution from such measurements.

\subsection{Departure point: the depolymerisation model}

Let us denote  $\mathcal{C}_i$ the  polymers containing $i$ monomers and $\mathcal C$ the monomers. Depolymerisation is a chain reaction modelled by  the following reaction system
\begin{equation}\label{eq:depol}
\mathcal{C}_i \stackrel{b}{\longrightarrow} \mathcal{C}_{i-1} + \mathcal{C},\qquad i\geq i_0+1\geq 2,
\end{equation}
where the depolymerisation rate $b>0$ is assumed to be constant, and $i_0$ the smallest polymer size. We denote $C_i(\tau)$ the concentration of polymers containing $i$ monomers at time $\tau$. The system is closed: no other reaction, no creation or degradation of polymers, so that assuming that  the initial concentrations satisfy $C(0)+\sum\limits_{i=i_0}^\infty i C_i(0)=\Mtot >0,$ this quantity is conserved over time.

Applying the mass balance equation,  we obtain the following infinite dimensional system 
\begin{equation}\label{eq:depol-ode}
	\deriv{\tau} C_i = b(C_{i+1} - C_i),\qquad i\geq i_0+1, \qquad \deriv{\tau} C_{i_0}=bC_{i_0+1},\qquad \deriv{\tau} C = \sum\limits_{i=i_0+1}^\infty b C_i.
\end{equation}
The experimental protocol is then modeled as the (noisy) measurement of
\[\tau \mapsto \sum\limits_{i=i_0}^\infty i^k C_i(\tau)
\]
on the time window $\tau \in [0,{\mathcal T}],$ for a given $k\in \N.$ Observing the polymerised mass corresponds to $k=1$~\cite{batzli2015agitation}, whereas light scattering methods are modeled by $k=2$~\cite{some2013light}.

{\inserted{Our depolymerisation model may be seen as a specific case of the Becker-D\"oring system, which describes polymerisation-depolymerisation chain reactions~\cite{BeckerDoring_1935}. A specificity of many polymeric systems is that, in average, polymers contain a very high number of monomers, leading to formally define 
\[\ep\coloneqq\f{1}{i_M}\coloneqq\f{\sum\limits_{i=i_0}^\infty C_i(0)}{\sum\limits_{i=i_0}^\infty i C_i(0)},
\]
where $i_M\gg 1$ denotes the initial average polymer size. It is thus natural to look for continuous approximations of~\eqref{eq:depol-ode}, despite the fact that in our case the average polymer size decays with time.

\subsection{Continuous approximations of the depolymerisation model}

Many studies have thus been carried out to derive approximate size-continuous models for the Becker-D\"oring system in the limit $\ep\to 0$, deriving various boundary conditions according to the physics considered. At first order, the so-called {\it Lifshitz-Slyozov system}~\cite{LifshitzSlyozov_1961} is obtained, which consists in the nonlinear coupling of a continuous transport equation with the mass conservation equation. This first-order system does not require any boundary condition for well-posedness if the flux around $x=0$ is backward, {\it i.e.} if depolymerisation dominates polymerisation, as in our case. This is the most studied case, since it is used to model gelation phenomena such as Ostwald ripening or Ouzo effect. Weak convergence results in measure spaces have been obtained in this case, see~\cite{ColletGoudonPoupaudVasseur_2002,LaurencotMischler_2002,Schlichting_2019}. To take into account nucleation, {\it i.e.} the spontaneous creation of polymers out of monomers, the Lifshitz-Slyozov system complemented by  a non-homogeneous Dirichlet boundary condition has been studied more recently in~\cite{Calvo_2018,deschamps2017quasi,CalvoHingantYvinec_2021}.}

At second order, the system~\eqref{eq:depol-ode} may be approximated by an advection-diffusion / Fokker-Planck equation, which needs to be complemented by a boundary condition at $x=0.$ Several types of boundary conditions have been proposed, depending on the problem considered. In~\cite{Conlon_2010,Velazquez_1998}, a homogeneous Dirichlet boundary condition is considered, and its long-time asymptotics is analysed in~\cite{ConlonDabkowski_2022}. In~\cite{ConlonSchlichting_2019}, a non-homogeneous Dirichlet boundary condition is imposed, 
derived from the equilibrium solution of the Fokker-Planck equation. Another non-homogeneous boundary condition is proposed 
in~\cite{ColletGoudonPoupaudVasseur_2002}, linking $\solS$ to the concentration of monomers. 
This condition is obtained in order to have mass conservation of the full system, when polymerisation is also considered, see also~\cite{Goudon2019}.  The asymptotics is no more valid in our case, {\it i.e.} when polymerisation vanishes; we thus propose another boundary condition, that we call {\it absorbing}  since it is designed in order to remain as close as possible to the original system~\eqref{eq:depol-ode}, and in reference to the field of artificial boundary conditions~\cite{halpern1986artificial}.

\subsection{Outline of the article and main results}

Our case is simpler than the full Becker-D\"oring system for two reasons: we only consider constant depolymerisation rates $b_i\equiv b$ for all $i\geq i_0,$ and the absence of polymerisation makes our system linear with a constant outflux. We are thus able to derive two error estimates for the approximate systems. First, we derive an $L^2$ error estimate between the discrete system~\eqref{eq:depol-ode} and the first-order approximate system~\eqref{eq:first-order} (Prop.~\ref{prop:first-order:estim}). This estimate reveals to be in the order of $\ep$. Second, we derive an $L^2$ error estimate between~\eqref{eq:depol-ode} and a second-order approximate system~\eqref{eq:second-order} complemented with an {\it absorbing} boundary condition (Prop.~\ref{prop:second-order:estim}), which reveals to be in the order of $\ep^{3/2}.$ This improved error estimate shows that
the absorbing boundary condition in~\eqref{eq:second-order} is well-adapted to our purely depolymerising system. Up to our knowledge, it has not been proposed before, and no quantitative error estimates have been obtained yet for systems including polymerisation.

Once equipped with these two approximate models, which are more easily tractable numerically and theoretically than~\eqref{eq:depol-ode}, we turn in Section~\ref{sec:inverse} to settle the main purpose of our paper: 
How to estimate the initial size distribution from moments measurements? This inverse problem has already been solved when using the first-order approximate system~\eqref{eq:first-order}:  we only recall the main results of the article~\cite{armiento:2016} in Section~\ref{subsec:inverse:advection}. In Section~\ref{subsec:inverse:advection-diffusion}, we explain in detail the inverse problem setting for the second-order approximate system~\eqref{eq:second-order}, that we set in a finite domain $[0,L]$ in~\eqref{eq:second-order-L} and, by using a similar set of differential equations as for the first-order system, reduce to the following inverse problem: How to estimate the initial condition not from the time-dynamics of a moment of the solution, but from the time-dynamics of the boundary value $t\mapsto \sol(t,0)$? Finally, Subsection~\ref{subsec:expostab} is dedicated to the preliminary analysis and two useful inequalities on the direct problem~\eqref{eq:second-order-L}. 

Section~\ref{sec:inverse:anal} provides the inverse problem solution. Our reformulation in terms of  observation at the boundary makes our problem close to controllability issues, studied in similar systems in~\cite{Coron:2005vd,Cornilleau:2012dba}. To obtain an observability result, we thus
use two main inequalities: a Carleman-type inequality (Theorems~\ref{coroll:Carleman} and~\ref{th:carleman}) on the one-hand, inspired by~\cite{Cornilleau:2012dba}, and a log-convexity argument (Proposition~\ref{prop:logconv} in Section~\ref{subsec:expostab}), inspired by~\cite{Bardos:1973aa}. These two inequalities lead us to our main observability result, stated in Theorem~\ref{th:initobserv}. In turn, this result leads us to propose a generalised Tikhonov regularisation, for which we prove an error estimate in Theorem~\ref{th:tikhonov}.

\inserted{\subsection{Some notation considerations}
In what follows, we  first study the direct problems over the whole space (Section~\ref{sec:direct}). When doing so we use the index $\cdot_\infty,$ for instance $\solFinf$ and $\solSinf$ respectively for the first-order and second-order solution, $\stateSpaceI$ for the state space with $x\in (0,\infty),$ etc. We then turn to the problem over an interval $[0,\LM],$ more convenient both to obtain certain estimates and to be closer to the numerical solution. In such cases, we simply drop the indices $\infty,$ considering for instance $\solEps,$ $\modelOp$, $\stateSpace$ etc. The link between the problem in a finite and in an infinite state space, and our modelling choices, are explained in Section~\ref{subsec:inverse:advection-diffusion}. In Section~\ref{sec:inverse:anal}, we drop the indices $\cdot^{\ep,1}$ for simplicity.}

\section{Asymptotic models and error estimates}
\label{sec:direct}

As a first step in our inverse problem setting, let us study the direct problems linked to two successive approximations of~\eqref{eq:depol-ode} in the limit $\ep\to 0$. The first order approximation is a backward transport equation, studied in Section~\ref{subsec:direct:advection}, and the second order is an advection-diffusion or Fokker-Planck equation, addressed in Section~\ref{subsec:direct:advection-diffusion}. \inserted{We illustrate these estimates by numerical simulations in Section~\ref{sec:num}.}\subsection{First order asymptotics: a pure advection model}
\label{subsec:direct:advection}
We aim at formulating an asymptotic model of \eqref{eq:depol-ode} in the large size limit, {\it i.e.} when $\ep \to 0$. Introducing the change of variables $t= \tau \vareps$ with $t \in [0,T]$ and $T=\ep \mathcal{T}$ a finite time, we have for $c_i^\vareps: t \mapsto C_i(t/\vareps)$,
\begin{equation}
\label{eq:BD}
\deriv{t}c_i^\vareps=\frac{b}{\vareps}(c_{i+1}^\vareps-c_i^\vareps), \qquad i\geq i_0+1,\qquad \deriv{t}c_{i_0}^\eps=\f{b}{\eps}c_{i_0+1}^\eps, \qquad c_i^\vareps(0)=c_i^{0,\eps}.
\end{equation}
We also assume that 
\[
	\sum_{i \geq i_0} \vareps  |c_i^\vareps(0)|^2 < \infty,
\]
namely $(c_i^\ep(0))_{i\geq i_0} \in \ell_2(\N)$. We easily deduce that this is still the case for all $t > 0$,  {\it i.e.} $(c_i^\ep(t))_{i\geq i_0} \in \ell_2(\N)$.

Let us now define the scaled size variable $x_i^\vareps \coloneqq \vareps (i-i_0)$, and the piecewise constant interpolant $c^\vareps \in \Czero[\R^+;\Ltwo[\R^+]]$ defined by \begin{equation}\label{def:ueps}
	c^\vareps(t,x) \coloneqq c_{i+1}^\ep, \quad t \geq 0,\,   x \in [x_i^\ep,x_{i+1}^\ep).
\end{equation}

Assuming $\vareps$ small, we prove in the next proposition that the limit of $c^\vareps$ is $\solFinf$ solution of 
\begin{equation}
\label{eq:first-order}
\left\{
\begin{aligned}
	\partial_t \solFinf - b \partial_x \solFinf&=0,&  (t,x) \in  [0,T]\times \R^+ ,\\
	\solFinf(0,x) &= \sol^0(x), & x \in \R^+,
\end{aligned}\right.
\end{equation}
for a convenient initial condition $\sol^0$ close to $c^\ep (0,x)=(c_i^{0,\ep})_{i\geq i_0+1}$.
Assuming that $\sol^0(x) \in \mathrm{H}^k(\R_+^*),\, k \geq 1$, the method of characteristics or a simple application of semi-group theory gives 
\[
	\solFinf \in \Czero[(0,T);\mathrm{H}^k(\R_+^*)] \cap \Cone[(0,T);\Ltwo[\R_+]].
\]
In order to compare the solution of \eqref{eq:BD} and the solution of \eqref{eq:first-order}, we introduce the discrete norm
\[
 \norm{u}_{2,\vareps}^2 \coloneqq \sum_{i \geq i_0} \vareps  |u(x_i^\vareps)|^2,
\]
which is well-defined for functions in $\Hone[\R_+^*]$, consistent with the $\Ltwo[\R_+]$-norm as $\varepsilon$ tends to 0, and which coincides with the $\Ltwo[\R^+]-$norm for piecewise constant functions defined on the grid $(x_i^\ep),$ as $c^\ep(t,\cdot)$ is. We have the following error estimate between $\sol^\ep$ representing $c_i^\ep$ and $\sol_\infty$.

\begin{proposition}\label{prop:first-order:estim}
Let  $\sol^0 \in \mathrm{H}^2(\R^+)$ and $\solFinf\in  \Czero[(0,T);\mathrm{H}^2(\R_+^*)] \cap \Cone[(0,T);\Ltwo[\R_+]]$ solution to~\eqref{eq:first-order}. Let $(c_i^{0,\ep})\in \ell_2 (\N)$, $(c_i^\ep)$ solution to~\eqref{eq:BD} and $c^\ep$ defined by~\eqref{def:ueps}  such that 
\[\|c^\vareps(0,\cdot)- \sol^0 \|_{2,\vareps} \leq \varepsilon.\] 
There exists a constant $\Cst >0$  such that for all $t > 0$, we have
	 \[
	 \norm{ c^\vareps(t,\cdot)- \solFinf(t,\cdot)}_{2,\vareps} \leq  \vareps \inserted{(1+\Cst \Vert\sol^0\Vert_{\Htwo}t)}.
	 \]
\end{proposition}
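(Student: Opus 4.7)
The idea is to run a discrete-in-space energy estimate on the error $e_i(t) := c_{i+1}^\varepsilon(t) - \solFinf(t, x_i^\varepsilon)$ defined for $i \geq i_0$, for which one immediately verifies $\norm{c^\varepsilon(t,\cdot)-\solFinf(t,\cdot)}_{2,\varepsilon}^2 = \sum_{i \geq i_0} \varepsilon\, |e_i(t)|^2$, so this is exactly the quantity to control.

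The first step is a consistency analysis. Setting $v_i(t) := \solFinf(t, x_i^\varepsilon)$ and using $\p_t \solFinf = b\, \p_x \solFinf$ together with the Taylor-integral identity $v_{i+1} - v_i = \varepsilon\, \p_x \solFinf(t, x_i^\varepsilon) + \int_{x_i^\varepsilon}^{x_{i+1}^\varepsilon} \int_{x_i^\varepsilon}^y \p_{xx} \solFinf(t, z)\, dz\, dy$, one obtains $\p_t v_i = \frac{b}{\varepsilon}(v_{i+1} - v_i) - R_i(t)$ with, after Cauchy--Schwarz on the double integral,
$$|R_i(t)|^2 \leq b^2\, \varepsilon\, \|\p_{xx} \solFinf(t,\cdot)\|_{L^2([x_i^\varepsilon, x_{i+1}^\varepsilon])}^2.$$
Summing in the discrete norm yields $\norm{R(t)}_{2,\varepsilon} \leq b\,\varepsilon\, \|\p_{xx} \solFinf(t,\cdot)\|_{L^2(\R^+)}$. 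Since $\solFinf(t,x) = \sol^0(x+bt)$, the transport preserves the $H^2$-norm, and hence $\norm{R(t)}_{2,\varepsilon} \leq b\,\varepsilon\, \|\sol^0\|_{\Htwo}$.

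The second step is the energy identity. Subtracting the $v_i$-equation from $\p_t c_{i+1}^\varepsilon = \frac{b}{\varepsilon}(c_{i+2}^\varepsilon - c_{i+1}^\varepsilon)$ (which holds for every $i \geq i_0$) gives $\p_t e_i = \frac{b}{\varepsilon}(e_{i+1} - e_i) + R_i$. Multiplying by $\varepsilon\, e_i$, summing over $i \geq i_0$, and applying the discrete identity $2 e_i(e_{i+1}-e_i) = e_{i+1}^2 - e_i^2 - (e_{i+1}-e_i)^2$, the telescoping sum (whose tail at infinity vanishes by square-summability) produces
$$\frac{d}{dt} \norm{e(t)}_{2,\varepsilon}^2 = -b\, e_{i_0}^2(t) - b \sum_{i \geq i_0} (e_{i+1}(t) - e_i(t))^2 + 2\varepsilon \sum_{i \geq i_0} e_i(t) R_i(t).$$
The first two terms on the right-hand side carry favourable signs --- reflecting the outflow character of the transport at $x=0$, which is exactly why the limit problem requires no boundary condition --- so they can simply be discarded, and a final Cauchy--Schwarz on the source term yields $\frac{d}{dt} \norm{e}_{2,\varepsilon} \leq \norm{R}_{2,\varepsilon} \leq b\,\varepsilon\, \|\sol^0\|_{\Htwo}$.

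Integrating in time and invoking the hypothesis $\norm{e(0)}_{2,\varepsilon} \leq \varepsilon$ delivers $\norm{e(t)}_{2,\varepsilon} \leq \varepsilon\,\bigl(1 + b\, \|\sol^0\|_{\Htwo}\, t\bigr)$, which is the claimed bound with $\Cst = b$. The only delicate point in the argument is the handling of the boundary term $-b e_{i_0}^2$ produced by the discrete summation by parts: no matching contribution from the special equation $\p_t c_{i_0}^\varepsilon = \frac{b}{\varepsilon} c_{i_0+1}^\varepsilon$ is needed because $e_{i_0}$ involves $c_{i_0+1}^\varepsilon$ rather than $c_{i_0}^\varepsilon$, and its favourable sign means the boundary at $x = 0$ dissipates energy rather than introducing an obstacle.
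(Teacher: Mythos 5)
Your proof is correct and follows essentially the same route as the paper's: the same grid error $e_i = c_{i+1}^\varepsilon - \solFinf(t,x_i^\varepsilon)$, the same Taylor expansion with integral remainder for the consistency term, the same discrete summation by parts producing the dissipative boundary term at $i_0$ (with the same observation that the special equation for $c_{i_0}^\varepsilon$ never enters), and the same Cauchy--Schwarz plus Gronwall-in-integral-form conclusion. The only cosmetic differences are that you keep the exact identity $2e_i(e_{i+1}-e_i)=e_{i+1}^2-e_i^2-(e_{i+1}-e_i)^2$ where the paper uses Young's inequality, and your passage from $\frac{\diff}{\diff t}\norm{e}^2\leq 2\norm{e}\,\norm{R}$ to $\frac{\diff}{\diff t}\norm{e}\leq\norm{R}$ should, to be safe where $\norm{e}$ vanishes, be phrased via the integral Gronwall lemma the paper cites.
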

{\inserted{\begin{remark}
If we simply assume $\sol^0(x_i^\ep)=c^\ep(0,x_i)=c_{i+1}^{0,\ep},$  the right-hand side becomes $\Cst \ep  \Vert\sol^0\Vert_{\Htwo} t.$ Our assumption means that when $\ep\to 0$ the distribution $(c_i^{0,\ep})$ is the discretisation of a smooth function $\sol^0.$ In a numerical analysis perspective, our estimate is an error estimate for a semi-discretisation of the transport equation~\cite{Hundsdorfer_2007}. Following~\cite{ColletGoudonPoupaudVasseur_2002,LaurencotMischler_2002}, we could also look for equivalent assumptions easier to justify from a modelling perspective, of the kind
\[\vert c_{i+1}^{0} - c_i^0 \vert \leq K_i/i_M,\qquad  \vert c_{i+1}^{0} - 2 c_i^0 +c_{i-1}^0 \vert \leq K_i/i_M^2,
\]
with $\norm{K_i}_{2,\ep} <\infty$ and $i_M=1/\ep.$ 
    \end{remark}}}
\begin{proof}

Let \inserted{$c^\ep$ defined by~\eqref{def:ueps}} and introduce $\tilde{\sol}^{\vareps}(t,\cdot) \coloneqq c^\vareps(t,\cdot) - \solFinf(t,\cdot)$. We have for all $i\geq i_0$
\begin{align}\label{eq:eq-diff-asympt-0}
	\frac{\diff}{\diff t} \tilde{\sol}^{\vareps}(t,x^\vareps_i) &= 	\frac{b}{\vareps}(c^\vareps(t,x^\vareps_{i+1}) - c^\vareps(t,x^\vareps_{i})) - b \partial_x \solFinf(t,x^\vareps_i) \notag \\
	&=  \frac{b}{\vareps}(\tilde{\sol}^{\vareps}(t,x^\vareps_{i+1}) - \tilde{\sol}^{\vareps}(t,x^\vareps_{i})) \notag \\
	&\hspace{1cm}+ \left[ \frac{b}{\vareps}(\solFinf(t,x^\vareps_{i+1}) - \solFinf(t,x^\vareps_{i})) - b \partial_x \solFinf (t,x_i) \right],
\end{align}
By Taylor's expansion with remainder in integral form applied to $\solFinf(t,\cdot) \in \Htwo[x_i^\vareps,x_{i+1}^\vareps]$ we have 
\[
	\solFinf(t,x^\vareps_{i+1}) = \solFinf(t,x^\vareps_{i}) + \vareps \partial_x \solFinf(t,x^\vareps_{i}) +  \int_{x_i}^{x_{i+1}} \partial^2_{x} \solFinf(t,x) ( x_{i+1} - x )\, \diff x,
\]
we obtain
\begin{multline}\label{eq:eq-diff-asympt-0b}
	\frac{\diff}{\diff t} \tilde{\sol}^{\vareps}(t,x^\vareps_i) = \frac{b}{\vareps}\bigl(\tilde{\sol}^{\vareps}(t,x^\vareps_{i+1}) - \tilde{\sol}^{\vareps}(t,x^\vareps_{i})\bigr) +\frac{b}{\vareps}\int_{x_i}^{x_{i+1}} \partial^2_{x} \solFinf(t,x) (x_{i+1} - x)\, \diff x.
\end{multline}
Multiplying this equation by $\tilde{\sol}_i(t) \coloneqq \tilde{\sol}^{\vareps}(t,x^\vareps_i)$, we obtain
\begin{align}\label{eq:eq-diff-asympt-0c}
	\f{1}{2}\frac{\diff}{\diff t} | \tilde{u}_i|^2 &= \frac{b}{\vareps}\left(\tilde{u}_i \tilde{u}_{i+1}-(\tilde{u}_i)^2\right) + \frac{b}{\vareps}\int_{x_i}^{x_{i+1}} \tilde{u}_i \partial^2_{x} \solFinf(t,x) (x_{i+1} - x)\, \diff x \notag \\ 
&\inserted{\leq \frac{b}{2\vareps}\left(|\tilde{u}_{i+1}|^2-|\tilde{u}_i|^2\right)} + \frac{b}{\vareps}\int_{x_i}^{x_{i+1}} \tilde{u}_i \partial^2_{x} \solFinf(t,x) (x_{i+1} - x)\, \diff x.
\end{align}
By summing these quantities --~with $|\tilde{u}_i(t)|^2 \xrightarrow[]{i\to \infty} 0$~--  we find 
\[
	{\f{1}{2}}\frac{\diff}{\diff t}  \sum_{i \geq i_0} \vareps|\tilde{u}_i(t)|^2 \leq -\frac{b}{2} |\tilde{u}_{i_0}|^2 +  \sum_{i \geq i_0} {b \tilde{u}_i(t)} \int_{x_i}^{x_{i+1}} \partial^2_{x} \solFinf(t,x) (x_{i+1} - x)\, \diff x, \\
\]	
which implies
\begin{multline}\label{eq:estimate-l2-1}
	\sum_{i \geq i_0} \vareps|\tilde{u}_i(t)|^2 \leq   \sum_{i \geq i_0} \vareps|\tilde{u}_i(0)|^2  \\ 
	+ 2 \int_0^t \left(\sum_{i \geq i_0} \vareps|\tilde{u}_i(\inserted{s})|^2\right)^{\frac{1}2} \left( \sum_{i \geq i_0} \frac{b^2}{\vareps} \left(\int_{x_i}^{x_{i+1}} \partial^2_{x} \solFinf(s,x) (x_{i+1} - x) \diff x\right)^2\right)^{\frac{1}2} \diff s.
\end{multline}
Using Cauchy-Schwarz inequality, we find
\begin{align*}
&\sum_{i \geq i_0}\frac{1}{\vareps} \left(\int_{x_i}^{x_{i+1}} \partial^2_{x} \solFinf(s,x) (x_{i+1} - x) \diff x\right)^2 \\
&\hspace{2cm}\leq \sum_{i \geq i_0} \frac{1}{\vareps} \left( \int_{x_i}^{x_{i+1}} \abs{\partial^2_{x} \solFinf(s,x)}^2 \diff x \right) \left(  \int_{x_i}^{x_{i+1}} (x_{i+1} - x)^2 \diff x \right)\\
&\hspace{2cm}\leq \frac{\vareps^2}{{3}}   \int_{x_{i_0}}^{\infty} \abs{\partial^2_{x} \solFinf(s,x)}^2 \diff x ,
\end{align*}
leading to
\begin{multline}\label{eq:estimate-l2}
	\sum_{i \geq i_0} \vareps|\tilde{u}_i(s)|^2 \leq   \sum_{i \geq i_0} \vareps|\tilde{u}_i(0)|^2  \\ 
	+ \frac{2\vareps{b}}{\sqrt{3}} \int_0^t \left(\sum_{i \geq i_0} \vareps|\tilde{u}_i(s)|^2\right)^{\frac{1}2} \left( \int_{0}^{\infty} |\partial_{x}^2 \solFinf  (s,x)|^2 \diff x\right)^{\frac{1}2} \diff s.
\end{multline}
We then recall the following Gronwall's lemma \cite[Theorem 5]{dragomir2003some}.
\begin{lemma}[Gronwall Lemma]\label{lemma:Gronwall}
Let $f : [a, b] \to \R_+$   a continuous function which satisfies the following relation
\[
	\frac{1}{2} f(t)^2\leq \frac{1}{2} f_0^2 +\int_a^t g(s)f(s)\diff s, \quad t \in [a,b],
\]
where $f_0\in \R$  and $g$ is a nonnegative continuous function in $[a, b].$ Then the following estimate holds
\[
	\vert f(t)\vert \leq \vert f_0\vert + \int_a^t g(s) \diff s,\quad t\in [a,b].
\]	
\end{lemma}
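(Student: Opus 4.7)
The statement is a classical Bihari-type integral inequality, and my plan is to recast the integral hypothesis as a differential inequality for a well-chosen antiderivative. Introduce
\[
U(t) \coloneqq f_0^2 + 2\int_a^t g(s) f(s)\,ds,
\]
which is $C^1$ on $[a,b]$ with $U'(t) = 2g(t)f(t) \geq 0$, so $U$ is nondecreasing with $U(a) = f_0^2$; the hypothesis then reads $f(t)^2 \leq U(t)$, and in particular $U \geq 0$ on $[a,b]$.

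Formally one would like to differentiate $\sqrt{U}$ directly to get $\frac{d}{dt}\sqrt{U(t)} = \frac{g(t)f(t)}{\sqrt{U(t)}} \leq g(t)$, but $\sqrt{U}$ need not be differentiable at points where $U$ vanishes (for instance if $f_0 = 0$ and $g \equiv 0$ on an initial segment). The standard remedy is a $\delta$-regularization: for any $\delta > 0$ set $U_\delta \coloneqq U + \delta$, so that $\sqrt{U_\delta}$ is $C^1$ and bounded below away from $0$ on $[a,b]$. Using $f(t) \leq \sqrt{U(t)}$ (legitimate since $f$ takes values in $\R_+$) together with $U \leq U_\delta$, one computes
\[
\frac{d}{dt}\sqrt{U_\delta(t)} \;=\; \frac{g(t) f(t)}{\sqrt{U_\delta(t)}} \;\leq\; \frac{g(t)\sqrt{U(t)}}{\sqrt{U_\delta(t)}} \;\leq\; g(t).
\]

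Integrating from $a$ to $t$ and using $U_\delta(a) = f_0^2 + \delta$ gives $\sqrt{U_\delta(t)} \leq \sqrt{f_0^2 + \delta} + \int_a^t g(s)\,ds$. Since $|f(t)| = f(t) \leq \sqrt{U(t)} \leq \sqrt{U_\delta(t)}$, sending $\delta \to 0^+$ yields the claimed bound $|f(t)| \leq |f_0| + \int_a^t g(s)\,ds$. The only mildly subtle point is the need for the $\delta$-regularization to sidestep the non-differentiability of $\sqrt{U}$ at its zeros; apart from that, the argument is a one-line chain-rule computation followed by integration.
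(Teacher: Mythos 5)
Your proof is correct. The paper itself does not prove this lemma---it simply quotes it from Dragomir's monograph (Theorem 5 there), so there is no internal argument to compare against; your derivation is the standard one for this Ou-Iang/quadratic-Gronwall type inequality: majorize $f^2$ by the nondecreasing $C^1$ function $U$, differentiate $\sqrt{U+\delta}$ to get the pointwise bound by $g$, integrate, and let $\delta\to 0^+$. The one genuine subtlety---that $\sqrt{U}$ may fail to be differentiable where $U$ vanishes---is exactly the point you identify and handle with the $\delta$-regularization, and every inequality in the chain ($f\leq\sqrt{U}\leq\sqrt{U_\delta}$, $\sqrt{U}/\sqrt{U_\delta}\leq 1$, $\sqrt{f_0^2+\delta}\to|f_0|$) is justified. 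The argument is complete as written.
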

Hence applying Lemma~\ref{lemma:Gronwall} to \eqref{eq:estimate-l2}, \inserted{by taking
\[g(s)=\f{\ep b}{\sqrt{3}}\left( \int_{0}^{\infty} |\partial_{x}^2 \solFinf  (s,x)|^2 \diff x\right)^{\frac{1}2} \leq \f{\ep b}{\sqrt{3}} \Vert \sol^0\Vert_{H^2(\R^+)}
\]}
we  obtain
\[
	\left(\sum_{i \geq i_0} \vareps|\tilde{u}_i(t)|^2\right)^{\frac{1}2} \leq \inserted{\eps+}\frac{\varepsilon \inserted{b} t}{\sqrt{3}}\inserted{\Vert u^0\Vert_{H^2(\R^+)}},
\]
which finally leads to
\[
		\norm{ c^\vareps(t,\cdot) - \solFinf(t,\cdot)}_{2,\varepsilon} \lesssim  \vareps \inserted{(1+t\Vert u^0\Vert_{H^2(\R^+)})},
\]
and concludes the proof.
\end{proof}

\subsection{Second order asymptotics: an advection-diffusion model}\label{sec:second-order}
\label{subsec:direct:advection-diffusion}
We now go to the second order approximation, and consider the following system
\begin{equation}\label{eq:second-order}
\left\{
\begin{aligned}
        \partial_t \solSinf-b \partial_x\solSinf- \f{b\ep}{2} \partial_{x}^2 \solSinf &= 0, & (t,x)\in  [0,T]\times\R^+, \\
        \partial_t\solSinf(t,0) - b \partial_x \solSinf(t,0) &=0, & t\in[0,T],  \\
        \solSinf(0,x) &= \sol^\vareps(0,x),&  x\in \R^+.
\end{aligned}
\right.
\end{equation}
\inserted{The advection-diffusion equation satisfied by $\solSinf$ comes naturally from a Taylor expansion of the equation, see {\it e.g.}~\cite{Velazquez_1998,ColletGoudonPoupaudVasseur_2002}. The idea of the transport equation as a boundary condition is to lead to a solution as close as possible to the solution in the whole space, hence without any boundary condition: the transport equation is then obtained as the first order approximation of the exact transparent boundary condition, see~e.g.~\cite{halpern1986artificial}. This led us to state the first-order equation, without the corrective diffusion term, and to guess that since it only concerns a boundary layer the error is not made worse. 

We can also justify the boundary condition as follows. An important specificity of our model, differently from the full Becker-D\"oring system where $i_0=1$, is the conservation of the number of polymers, defined by
\[P_0\coloneqq \ep \sum\limits_{i=i_0}^\infty c_i^\ep.\]
We look for $u^\ep_\infty (t,x_i)=c_{i+1}+o(\ep),$ and having already obtained the transport-diffusion equation for $u^\ep_\infty,$  we want to preserve the conservation of polymers for the continuous model. We first have
\[ \deriv{t}c_{i_0}^\ep = \f{b}{\ep}c_{i_0+1}^\ep = \f{b}{\ep} u^\ep_\infty(t,0) +o(1),\quad \deriv{t}c_{i_0+1}^\ep =b\f{c^\ep_{i_0+2}-c^\ep_{i_0+1}}{\ep}=b\partial_x u^\ep_\infty(t,0) +o(1), \]
and by a Taylor expansion, for $i\geq i_0+1$ and $x\in (x_{i-1},x_i)$ we have
\[u(t,x)=u(t,x_i)-\int_x^{x_i} \p_x u(t.s)\diff s,\]
hence integrating in $(x_{i-1},x_i)$ we get
\[\begin{array}{lll}\dst\int_{x_{i-1}}^{x_i} u(t,x)\diff x &= &\ep u(t,x_i) -\int_{x_{i-1}}^{x_i} \p_x u(t,s) \int_{x_{i-1}}^s \diff x \diff s 
\\
&= &\ep c_{i+1}(t)  -\int_{x_{i-1}}^{x_i} \p_x u(t,s) (s-x_{i-1}) \diff s,
\end{array}\]
and using the approximation $\int_{x_{i-1}}^{x_i} \p_x u(t,s) (s-x_{i-1}) \diff s=\f{\ep}{2} \int_{x_{i-1}}^{x_i} \p_x u(t,s)\diff s +o(\ep^2)$ we get
\begin{equation}\label{approx_ci} c_{i+1}^\ep=\f{1}{\ep}\int_{x_{i-1}}^{x_i} u^\ep_\infty(t,x)\diff x +\f{1}{2} \int_{x_{i-1}}^{x_i} \partial_x u^\ep_\infty (t,s)\diff s +o(\ep)
.\end{equation}
Gathering all this, we formally have
\[\deriv{t} P_0= {b} u^\ep_\infty(t,0) + \ep b\partial_x u^\ep_\infty(t,0) +   \deriv{t} \int_0^\infty u^\ep_\infty(t,x)\diff x +\f{\ep}{2} \deriv{t} \int_0^\infty\partial_x u^\ep_\infty (t,s)\diff s +o(\ep)\]
hence, applying the transport-diffusion equation,
\[\begin{array}{lll}
0&=&\f{b}{\ep} u^\ep_\infty(t,0) + b\partial_x u^\ep_\infty(t,0)+ \f{1}{\ep} \int_0^\infty (b \partial_x+ \f{b\ep}{2} \partial_x^2 ) u^\ep_\infty \diff x   -\f{1}{2} \deriv{t} u^\ep_\infty (t,0)+o(1)
\\ \\
&=&\f{b}{\ep} u^\ep_\infty(t,0) + b\partial_x u^\ep_\infty(t,0)- \f{b}{\ep} u^\ep_\infty (t,0) -\f{b}{2} \partial_x  u^\ep_\infty (t,0)   -\f{1}{2} \partial_t u^\ep_\infty (t,0)+o(1) 
\\ \\
&=& \f{b}{2}\partial_x u^\ep_\infty(t,0)  -\f{1}{2} \partial_t u^\ep_\infty (t,0)+o(1),
\end{array}
\]
which provides us with the transport equation for the boundary condition.}  

To study first the well-posedness and some regularity properties of the equation, we define $\stateSpaceI = \Ltwo[\R^+] \times \R$, equipped with the norm
\[
    \forall \state = (u,v) \in \stateSpaceI, \quad \norm{\state}_{\stateSpaceI}^2 = \norm{u}_{\Ltwo[\R^+]}^2 + |v|^2.
\]
We introduce the operator $(\modelOpI,\mathcal{D}(\modelOpI))$ defined by
\[
    \modelOp_{\infty} \state = \begin{pmatrix}
    - b u' -\f{b\ep}{2} u'' \\
    - b\sqrt{\f{\vareps}{2}} u'(0)
    \end{pmatrix},
\]
where
\[
    \mathcal{D}(\modelOpI) = \left\{ \state=\begin{pmatrix}u \\ v \end{pmatrix}\in \stateSpaceI \, \Big| \, u \in \mathrm{H}^2(\R^+), \, \sqrt{\f{\ep}{2}} u(0) = v \right\},
\]
such that a solution $\solSinf$ of \eqref{eq:second-order} is defined from the $\stateI = \begin{pmatrix}\solSinf \\ v \end{pmatrix}$ solution of the initial value problem
\begin{equation}\label{eq:semigroup}
\begin{cases}
	\dsp \deriv{t} \stateI + \modelOpI \stateI = 0, \quad t > 0 \\
	\stateI(0) = \stateI^0 
\end{cases}
\end{equation}
with $\stateI^0 = \begin{pmatrix} \solSinf(0,x) \\ \sqrt{\f{\ep}{2}} \solSinf(0,0) \end{pmatrix}$.

\begin{proposition}\label{prop:second-order:exist}
	The operator $(\modelOpI,\mathcal{D}(\modelOpI))$ is maximal {accretive} in $\stateSpaceI$. Therefore, for any initial condition $\stateI^0 \in \stateSpaceI$, there exists a unique mild solution $\stateI \in \Czero[(0,T);\stateSpaceI]$ to \eqref{eq:semigroup}.	Furthermore for any initial condition $\stateI^0 \in \mathcal{D}(\modelOpI)$, there exists a unique strict solution to \eqref{eq:semigroup} with
	\[
		\stateI \in \Czero[(0,T);\mathcal{D}(\modelOp_\infty)] \cap \Cone[(0,T);\stateSpaceI].
	\]
\end{proposition}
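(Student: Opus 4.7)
The plan is to apply the Hille--Yosida / Lumer--Phillips theorem: once $\modelOpI$ is shown to be maximal accretive on $\stateSpaceI$, the existence and regularity of mild and strict solutions to~\eqref{eq:semigroup} follow from standard semigroup theory.

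For accretivity, I will compute $\langle \modelOpI \state, \state\rangle_{\stateSpaceI}$ for $\state = (u,v) \in \mathcal{D}(\modelOpI)$ by integration by parts, using that $u \in \mathrm{H}^2(\R^+)$ forces $u, u' \to 0$ at infinity. The first component yields
\begin{equation*}
-b\int_0^\infty u'u\,\diff x = \f{b}{2}u(0)^2, \qquad -\f{b\ep}{2}\int_0^\infty u'' u\,\diff x = \f{b\ep}{2}u'(0)u(0) + \f{b\ep}{2}\|u'\|_{\Ltwo[\R^+]}^2,
\end{equation*}
while the second component contributes $-b\sqrt{\ep/2}\,u'(0)\,v = -\f{b\ep}{2}u'(0)u(0)$ thanks to the domain coupling $v = \sqrt{\ep/2}\,u(0)$. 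The cross boundary terms cancel exactly (this cancellation is precisely what the absorbing boundary condition is designed to achieve), leaving
\begin{equation*}
\langle \modelOpI \state, \state\rangle_{\stateSpaceI} = \f{b}{2}u(0)^2 + \f{b\ep}{2}\|u'\|_{\Ltwo[\R^+]}^2 \geq 0.
\end{equation*}

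For maximality, I will show that $\Id + \modelOpI$ is surjective: given $(g,h)\in \stateSpaceI$, the problem is to find $(u,v)\in \mathcal{D}(\modelOpI)$ with $u - bu' - \f{b\ep}{2}u'' = g$ on $\R^+$ and $v - b\sqrt{\ep/2}\,u'(0) = h$. Eliminating $v = \sqrt{\ep/2}\,u(0)$ turns the second equation into the Robin-type boundary condition $u(0) - bu'(0) = \sqrt{2/\ep}\,h$. The natural approach is to recast the reduced problem variationally on $\mathrm{H}^1(\R^+)$: find $u$ such that
\begin{equation*}
a(u,\varphi) := \int_0^\infty u\varphi\,\diff x - b\int_0^\infty u'\varphi\,\diff x + \f{b\ep}{2}\int_0^\infty u'\varphi'\,\diff x + \f{\ep}{2}u(0)\varphi(0) = \int_0^\infty g\varphi\,\diff x + \sqrt{\ep/2}\,h\,\varphi(0)
\end{equation*}
for all $\varphi \in \mathrm{H}^1(\R^+)$. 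The same integration-by-parts identity as above gives $a(u,u) \geq \|u\|_{\Ltwo[\R^+]}^2 + \f{b\ep}{2}\|u'\|_{\Ltwo[\R^+]}^2$, so $a$ is coercive; continuity is straightforward from Cauchy--Schwarz and the trace continuity of $u\mapsto u(0)$ on $\mathrm{H}^1(\R^+)$. Lax--Milgram then produces a unique $u\in \mathrm{H}^1(\R^+)$, and the strong form $u'' = \f{2}{b\ep}(bu' + g - u) \in \Ltwo[\R^+]$ bootstraps it to $u \in \mathrm{H}^2(\R^+)$; setting $v = \sqrt{\ep/2}\,u(0)$ returns the required element of $\mathcal{D}(\modelOpI)$.

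With maximal accretivity in hand, the Hille--Yosida theorem immediately yields that $-\modelOpI$ generates a $C_0$-semigroup of contractions on $\stateSpaceI$, which provides the mild solution for any $\stateI^0\in \stateSpaceI$ and the strict solution with the announced regularity for $\stateI^0 \in \mathcal{D}(\modelOpI)$. The only genuinely nonstandard point is the maximality step, where the coupling between the bulk unknown $u$ and the boundary unknown $v$ has to be resolved by eliminating $v$ through the domain constraint and handling the resulting Robin problem; the rest reduces to textbook semigroup arguments.
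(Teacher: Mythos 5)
Your proposal is correct and follows essentially the same route as the paper: accretivity via the same integration by parts with the cross boundary terms cancelling through the domain constraint $v=\sqrt{\varepsilon/2}\,u(0)$, maximality via Lax--Milgram plus an elliptic bootstrap to $\mathrm{H}^2$, and then Lumer--Phillips. Your elimination of $v$ to obtain a Robin problem on $\mathrm{H}^1(\R^+)$ is just an isomorphic rewriting of the paper's variational formulation on the constrained space $\VI$ (the only quibble is the sign in your expression for $u''$, which should be $u''=\tfrac{2}{b\varepsilon}(u-bu'-g)$, immaterial for the conclusion $u''\in\Ltwo[\R^+]$).
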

\begin{proof}
First, we show that $\modelOpI$ is accretive. We have indeed, for all $\state = (u,v) \in \mathcal{D}(\modelOp_\infty)$,
\begin{align*}
 (\modelOpI \state,\state)_{\stateSpaceI} &=- \int_0^\infty b u u' \diff x - \int_0^\infty \f{b\eps}{2} u u'' \diff x- b\sqrt{\f{\eps}{2}} u'(0)v  \\
&= \frac{b}2 u(0)^2 + \f{b\ep}{2} \int_0^\infty | u'|^2\diff x {\geq} 0.    
\end{align*}

Second, in order to show that $\modelOpI$ is maximal, we introduce the bilinear form
\begin{multline}\label{eq:blilinear}
    a_{\infty,\lambda}((u,v),(w,k)) = \lambda(u,w)_{\Ltwo[\R^+]} + \lambda v k \\
    	- b( u', w)_{\Ltwo[\R^+]} + \f{b\ep}{2} (u', w')_{\Ltwo[\R^+]},
\end{multline}
defined in 
\[
	\VI = \Bigl\{ (u,v)\in \Hone[\R^+] \times \R \, | \, \sqrt{\f{\ep}{2}} u(0) = v \Bigr\}.
\]		
Let $(f,g) \in \stateSpaceI$ and $\lambda > 0$. We seek $\state = (u,v) \in \mathcal{D}(\modelOpI)$ such that $(\lambda \Id + \modelOpI)\state = (f,g)$. This means that for all $(w,k) \in \mathcal{V}$
\begin{equation}\label{eq:LaxMil}
    a_{\infty,\lambda}((u,v),(w,k)) = (f,w)_{\Ltwo[\R^+]} + gk,
\end{equation}
For $\lambda > 0$, $a_{\infty,\lambda}$ is coercive \inserted{in $\VI$} since
    \[
        a_{\infty,\lambda}((u,w),(u,w)) = \lambda(u,w)_{\Ltwo[\R^+]} + \lambda \abs{v}^2 + \f{b\ep}{2} (u',u')_{\Ltwo(\R^+)} + \frac{b}2 u(0)^2.
    \]
Therefore, from Lax-Milgram theorem, there exists one and only one solution $(u,w) \in  \VI$ to \eqref{eq:LaxMil}. Furthermore, for all $w \in {D(\R^+) =} \mathrm{C}^\infty_c(\R^{+\inserted{*}})$, we have
\[
\langle \f{b\ep}{2}  u'', w \rangle_{D'(\R^+),D(\R^+)} = -(f,w)_{\Ltwo[\R^+]} - (bu',w)_{\Ltwo[\R^+]} +\lambda (u,w)_{\Ltwo[\R^+]} 
\]
with $f-bu' \in \Ltwo[\R^+]$. Therefore, $ u'' \in \Ltwo[\R^+]$, hence $u \in \Htwo[\R^+]$, hence $(u,v) \in \mathcal{D}(\modelOpI)$. The existence then follows Lumer-Philips theorem, see for instance \cite[Theorem 2.6, Chapter 1 of Part II]{bensoussan-daprato-book}.
\end{proof}

Before proving an asymptotic estimate between the solutions to~\eqref{eq:BD} and to~\eqref{eq:second-order}, we  need to state a regularity result for~\eqref{eq:second-order}. This is given by the following lemma.

\begin{lemma}\label{prop:second-order:reg}
Let $\uin\in \Hthree[\R^{+}]$ and $\solSinf$ the unique solution to~\eqref{eq:second-order} defined in Prop.~\ref{prop:second-order:exist}. Then $\solSinf \in C^0\left((0,T);\Hthree[\R^{+}]\right)$ and we have
$$\int_{\R_+} \vert \p^2_{x} \solSinf(t,x)\vert^2 \diff x 
\leq \Vert \sol_0\Vert^2_{\Htwo}, \qquad
{b\ep}\int_0^{\TM} \int_{\R^+}\vert \p^3_{x}  \solSinf\vert^2 \diff x \diff t  \leq \Vert \sol_0\Vert^2_{\Htwo}.$$
\end{lemma}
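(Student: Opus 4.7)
The plan is to reduce both bounds to an energy estimate for $w \coloneqq \partial_x^2 \solSinf$, which turns out to satisfy the same advection--diffusion equation but with the much more convenient \emph{homogeneous Dirichlet} condition at $x=0$. The crucial observation is that, for sufficiently smooth solutions, evaluating the interior equation at $x=0$ and subtracting the transport boundary condition $\partial_t \solSinf(t,0) - b \partial_x \solSinf(t,0) = 0$ forces
\[ \tfrac{b\varepsilon}{2}\,\partial_x^2 \solSinf(t,0) = 0, \]
so that the trace of $w$ at the boundary vanishes identically in time. Differentiating the PDE twice in $x$, the function $w$ then satisfies
\[ \partial_t w - b \partial_x w - \tfrac{b\varepsilon}{2}\partial_x^2 w = 0 \text{ in } \R^+, \qquad w(t,0) = 0, \qquad w(0,\cdot) = \partial_x^2 \uin. \]

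Multiplying by $w$ and integrating over $\R^+$, integration by parts produces the advection boundary term $\tfrac{b}{2} w(t,0)^2$ and the diffusion contribution $\tfrac{b\varepsilon}{2}\|\partial_x w\|_{L^2}^2 + \tfrac{b\varepsilon}{2} w(t,0)\partial_x w(t,0)$, the far-field contributions vanishing by decay. Since $w(t,0) = 0$, all boundary terms at the origin disappear and we are left with
\[ \tfrac{1}{2} \tfrac{d}{dt} \|\partial_x^2 \solSinf(t,\cdot)\|_{L^2}^2 + \tfrac{b\varepsilon}{2}\|\partial_x^3 \solSinf(t,\cdot)\|_{L^2}^2 = 0. \]
Integrating in $t$ from $0$ to $T$ delivers both announced estimates at once, with right-hand side bounded by $\|\partial_x^2 \uin\|_{L^2}^2 \leq \|\uin\|_{H^2}^2$.

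The regularity $\solSinf \in C^0((0,T); H^3(\R^+))$ comes from iterating the semigroup analysis of Proposition~\ref{prop:second-order:exist}: the domain $D(\modelOpI^2)$ consists of those $z = (u,v) \in D(\modelOpI)$ with $\modelOpI z \in D(\modelOpI)$, which reduces after direct computation to $u \in H^3(\R^+)$ together with the compatibility $\partial_x^2 u(0) = 0$. For initial data in this subspace, standard semigroup theory yields $\stateI \in C^0((0,T); D(\modelOpI^2)) \cap C^1((0,T); D(\modelOpI))$, whence the $H^3$ regularity of $\solSinf$. For a general $\uin \in H^3(\R^+)$, the plan is to approximate by a sequence in $D(\modelOpI^2)$, apply the energy identity above to each element of the sequence, and pass to the limit using the uniform bounds.

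The main obstacle lies in this last approximation/density step, rather than in the energy identity itself, which is routine once the hidden condition $w(t,0)=0$ has been unveiled. The delicate point is to maintain, along the approximating sequence, the compatibility $\partial_x^2 u(0)=0$ while only the $H^2$-type bound is controlled on the data side, so that all boundary contributions appearing in the integrations by parts remain genuinely zero throughout the limiting procedure.
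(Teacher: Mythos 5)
Your proof follows essentially the same route as the paper's: the paper likewise differentiates the equation twice in $x$, observes that subtracting the transport boundary condition from the interior equation at $x=0$ yields the hidden homogeneous Dirichlet condition $\p_x^2\solSinf(t,0)=0$ for $w$, and obtains both bounds from the single energy identity $\f{1}{2}\deriv{t}\norm{w}^2_{\Ltwo[\R^+]}+\f{b\ep}{2}\norm{\p_x w}^2_{\Ltwo[\R^+]}=0$. The only difference is that the paper states the $\mathrm{H}^3$ regularity without the semigroup/$\mathcal{D}(\modelOpI^2)$ justification and the density step that you spell out (and rightly flag as the delicate point), so your write-up is, if anything, more careful on that side.
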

\begin{proof}
Let us denote $v=\p_x \solSinf,$ $w=\p_{x}^2 \solSinf.$ Deriving~\eqref{eq:second-order} once or twice in space, we have respectively:
\begin{equation}\label{eq:second-order:v}
\left\{
\begin{aligned}
        \partial_t v-b \partial_x v- \f{b\ep}{2}\partial_{x}^2 v &= 0, & (t,x)\in  [0,T]\times\R^+, \\
        \p_x v(0,t) &=0, & t\in[0,T],  \\
        v(0,x) &=  \sol_0'(x),&  x\in \R^+,
\end{aligned}
\right.
\end{equation}
where the Neumann boundary condition is given by using the equation for $\solSinf$ taken at $x=0,$
and similarly
\begin{equation}\label{eq:second-order:w}
\left\{
\begin{aligned}
        \partial_t w-b \partial_x w- \f{b\ep}{2} \partial_{x}^2 w &= 0, & (t,x)\in  [0,T]\times\R^+, \\
        w(t,0) &=0, & t\in[0,T],  \\
        w(0,x) &=  u_0''(0,x),&  x\in {\R^+}.
\end{aligned}
\right.
\end{equation}
We integrate in space  the equation for $w$ multiplied by $w$ and find
$$\f{1}{2}\f{\diff}{\diff t} \int_{\R^+} \vert w(t,x)\vert^2 \diff x
+\f{b\ep}{2} \int_{\R^+} \vert \p_x w\vert^2 \diff x=0.
$$
Integrating now in time gives
$$\f{1}{2} \int_{\R^+} \vert w(t,x)\vert^2 \diff x + \int_0^t \f{b\ep}{2} \int_{\R^+}\vert \p_x w (s,x)\vert^2 \diff x \diff s =
\f{1}{2} \int_{\R^+} \vert w(0,x)\vert^2 \diff x,$$
which provides us with the two desired inequalities.

\end{proof}
We are now ready to state an asymptotic estimate between~\eqref{eq:BD} and~\eqref{eq:second-order}, which reveals better  than the estimate between~\eqref{eq:BD} and~\eqref{eq:first-order} obtained in Prop.~\ref{prop:first-order:estim}. 
\begin{proposition}\label{prop:second-order:estim}
Let  $\uin \in \mathrm{H}^2(\R^+)$ and $\solSinf\in 	 \Czero[(0,T);\Htwo[\R^+]] \cap \Cone[(0,T);\Ltwo[\R^+]]$ the unique solution to~\eqref{eq:second-order} given by Proposition~\ref{prop:second-order:exist} with $\uin$ as an initial condition. Let $(c_i^{0,\ep})\in \ell_2 (\N)$, $(c_i^\ep)$ solution to~\eqref{eq:BD} and $c^\ep$ defined by~\eqref{def:ueps}  such that 
\[\|c^\vareps(0,\cdot)- \sol^0 \|_{2,\vareps} \leq \varepsilon^\alpha,\] 
for $\alpha>0.$ There exists a constant $\Cst>0$ such that for all $t > 0$,   we have
	 \[
	 \norm{ c^\vareps(t,\cdot) - \solSinf(t,\cdot)}_{2,\vareps} \leq \Cst (\ep^\alpha + t^{\f{1}{2}} \vareps^{\f{3}{2}}).
	 \]
\end{proposition}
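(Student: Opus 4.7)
The plan is to mimic the proof of Proposition~\ref{prop:first-order:estim}, but (i)~Taylor-expand $\solSinf$ to one higher order, so that the diffusion term in~\eqref{eq:second-order} absorbs the next term in the expansion, and (ii)~use Lemma~\ref{prop:second-order:reg} to trade the extra factors of $\ep$ in the $\Ltwo$ bound on $\partial_x^3 \solSinf$. Setting $\tilde{u}^\ep(t,\cdot)\coloneqq c^\ep(t,\cdot)-\solSinf(t,\cdot)$ and $\tilde u_i(t)=\tilde{u}^\ep(t,x_i^\ep)$, the computation leading to~\eqref{eq:eq-diff-asympt-0} goes through, except that we now subtract $b\partial_x\solSinf(t,x_i^\ep)+\tfrac{b\ep}{2}\partial_x^2\solSinf(t,x_i^\ep)$. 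Applying Taylor's formula with integral remainder at third order to $\solSinf(t,\cdot)\in\Hthree$ gives
\[
\solSinf(t,x_{i+1}^\ep)=\solSinf(t,x_i^\ep)+\ep\partial_x\solSinf(t,x_i^\ep)+\tfrac{\ep^2}{2}\partial_x^2\solSinf(t,x_i^\ep)+\tfrac{1}{2}\int_{x_i^\ep}^{x_{i+1}^\ep}\partial_x^3\solSinf(t,s)(x_{i+1}^\ep-s)^2\,\diff s,
\]
so the consistency error becomes $\tfrac{b}{2\ep}\int_{x_i^\ep}^{x_{i+1}^\ep}\partial_x^3\solSinf(t,s)(x_{i+1}^\ep-s)^2\,\diff s$, a quantity of order $\ep^2\partial_x^3\solSinf$ instead of $\ep\partial_x^2\solFinf$.

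Multiplying the resulting ODE for $\tilde u_i$ by $\tilde u_i$, bounding $\tilde u_i\tilde u_{i+1}\leq\tfrac12(|\tilde u_i|^2+|\tilde u_{i+1}|^2)$ exactly as in~\eqref{eq:eq-diff-asympt-0c}, summing over $i\geq i_0$ and telescoping yields the non-positive boundary contribution $-\tfrac{b}{2}|\tilde u_{i_0}|^2$, so that after discarding it
\[
\tfrac12\deriv{t}\sum_{i\geq i_0}\ep|\tilde u_i|^2\leq b\sum_{i\geq i_0}\tilde u_i\int_{x_i^\ep}^{x_{i+1}^\ep}\tfrac{(x_{i+1}^\ep-s)^2}{2}\partial_x^3\solSinf(t,s)\,\diff s.
\]
Applying Cauchy--Schwarz twice (inside each interval and then across the index $i$) and using $\int_{x_i^\ep}^{x_{i+1}^\ep}(x_{i+1}^\ep-s)^4\,\diff s=\ep^5/5$ gives the bound
\[
\sum_{i\geq i_0}\tfrac{1}{\ep}\Bigl(\int_{x_i^\ep}^{x_{i+1}^\ep}\tfrac{(x_{i+1}^\ep-s)^2}{2}\partial_x^3\solSinf(t,s)\,\diff s\Bigr)^2\leq\tfrac{\ep^4}{20}\int_0^\infty|\partial_x^3\solSinf(t,x)|^2\,\diff x.
\]

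Integrating in time as in~\eqref{eq:estimate-l2-1}--\eqref{eq:estimate-l2} produces an inequality of Gronwall form with $g(s)=\tfrac{b\ep^2}{\sqrt{20}}\bigl(\int_{\R^+}|\partial_x^3\solSinf(s,x)|^2\,\diff x\bigr)^{1/2}$. Lemma~\ref{lemma:Gronwall} then yields
\[
\Bigl(\sum_{i\geq i_0}\ep|\tilde u_i(t)|^2\Bigr)^{1/2}\leq\ep^\alpha+\tfrac{b\ep^2}{\sqrt{20}}\int_0^t\Bigl(\int_{\R^+}|\partial_x^3\solSinf(s,x)|^2\,\diff x\Bigr)^{1/2}\diff s.
\]
To finish, apply Cauchy--Schwarz in time and invoke the second bound of Lemma~\ref{prop:second-order:reg}, which provides $b\ep\int_0^T\!\int_{\R^+}|\partial_x^3\solSinf|^2\,\diff x\,\diff t\leq\|\sol^0\|_{\Htwo}^2$. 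This trades the $\ep^2$ prefactor against a $1/\sqrt{b\ep}$ factor, giving the announced $t^{1/2}\ep^{3/2}$ rate.

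The main obstacle I anticipate is a regularity gap: the statement only assumes $\sol^0\in\Htwo$ while Lemma~\ref{prop:second-order:reg} requires $\Hthree$ data to make sense of $\partial_x^3\solSinf$ and the associated energy identity. I would handle this by a density argument, first establishing the inequality for $\sol^0\in\Hthree$, where all manipulations above are justified, and then extending to $\Htwo$ data by approximation, using that both sides of the estimate are continuous with respect to the $\Htwo$ norm of the initial data and the mild-solution map of Proposition~\ref{prop:second-order:exist}.
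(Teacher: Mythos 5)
Your proposal reproduces the paper's argument almost step for step: third-order Taylor expansion with integral remainder, the same telescoping energy estimate with the dissipative term $-\tfrac{b}{2}|\tilde u_{i_0}|^2$, Gronwall's lemma, the Cauchy--Schwarz-in-time trade of the $\ep^2$ prefactor against $(b\ep)^{-1/2}$ using the second bound of Lemma~\ref{prop:second-order:reg}, and the closing density argument from $\Hthree[\R^+]$ to $\Htwo[\R^+]$ data. The orders and constants all check out.

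The one point where you deviate from the paper, and where your write-up has a genuine gap, is the boundary node $i=i_0$. You assert that "the computation goes through" with $b\partial_x\solSinf(t,x_i^\ep)+\tfrac{b\ep}{2}\partial_x^2\solSinf(t,x_i^\ep)$ subtracted at \emph{every} node, but at $x_{i_0}^\ep=0$ the time derivative of $\solSinf$ is governed by the boundary condition $\partial_t\solSinf(t,0)=b\partial_x\solSinf(t,0)$ of~\eqref{eq:second-order}, not by the interior advection-diffusion equation, so the identity you need there is not automatic. It does hold, because for $\Hthree$ data the interior equation extends continuously to $x=0$ and, combined with the boundary condition, forces $\partial_x^2\solSinf(t,0)=0$ --- exactly the observation used in the proof of Lemma~\ref{prop:second-order:reg} to derive the Neumann condition for $v=\partial_x\solSinf$ --- but this must be stated and justified; without it your claim that every node contributes only an $O(\ep^2)$ local consistency error is unsupported at the node where it is least obvious. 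The paper avoids the issue by keeping the first-order consistency error $\tfrac{b}{\ep}\int_{x_{i_0}}^{x_{i_0+1}}\partial_x^2\solSinf(t,x)(x_{i_0+1}-x)\,\diff x$ at $i=i_0$ and absorbing it into the dissipative term $-\tfrac{b}{2}|\tilde u_{i_0}|^2$ by Young's inequality, which produces the extra $\ep^3\Vert\sol^0\Vert_{\Htwo}^2\,t$ contribution and the same final rate $\ep^{3/2}t^{1/2}$. Either route works; yours is marginally cleaner once the missing fact $\partial_x^2\solSinf(t,0)=0$ is supplied.
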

{\inserted{
\begin{remark}
As for the first order approximation, if we simply assume $c_i^{0,\ep}=\sol^0(x_i^\ep),$ the right-hand side of the estimate is reduced to $\Cst  t^{\f{1}{2}} \vareps^{\f{3}{2}}.$ It is also likely that assuming more regularity on the initial condition would lead to a convergence in the order of $\ep^2.$
\end{remark}}}

\begin{proof}
 Let us first assume $\sol^0 \in \mathrm{H}^3(\R^+):$ the result for $\sol^0\in \Htwo[\R^+]$ follows by density of $\Hthree[\R^+]$ in  $\Htwo[\R^+]$.  Lemma~\ref{prop:second-order:reg} implies that	$\solFinf \in \Czero[(0,T);\mathrm{H}^3(\R^+)]$.
Defining again \inserted{$\sol^\ep$ by~\eqref{def:ueps} and}
$$\tilde{\sol}^{\vareps,1}(t,\cdot) \coloneqq c^\vareps(t,\cdot) - \solSinf(t,\cdot),$$ 
we have for $i >  i_0$
\begin{align*}
	\frac{\diff}{\diff t} \tilde{\sol}^{\vareps,1}(t,x^\vareps_i) 
	&=  \frac{b}{\vareps}(\tilde{\sol}^{\vareps,1}(t,x^\vareps_{i+1}) - \tilde{\sol}^{\vareps,1}(t,x^\vareps_{i})) \\
	&\quad\quad + \left[ \frac{b}{\vareps}(\solSinf(t,x^\vareps_{i+1}) - \solSinf(t,x^\vareps_{i})) - b \partial_x \solSinf (t,x_i) - \frac{b\varepsilon}2 \partial_x^2 \solSinf (t,x_i) \right],
\end{align*}
whereas we keep \eqref{eq:eq-diff-asympt-0} for $i=i_0$.

By Taylor's expansion with remainder in integral form applied to $\solSinf(t,x^\vareps_{i+1}) \in \mathrm{H}^3[x_i^\vareps,x_{i+1}^\vareps]$,  
\begin{multline*}
		\solSinf(t,x^\vareps_{i+1}) = \solSinf(t,x^\vareps_{i}) + \vareps \partial_x \solSinf(t,x^\vareps_{i}) + \frac{\vareps^2}2 \partial_x^2 \solSinf(t,x^\vareps_{i}) \\
		 + \frac{1}2 \int_{x_i}^{x_{i+1}} \partial^3_{x} \solSinf(t,x) (x_{i+1} - x)^2\, \diff x,
\end{multline*}
leading to, for $i>i_0$,
\[
	\frac{\diff}{\diff t} \tilde{\sol}^{\vareps,1}(t,x^\vareps_i) = \frac{b}{\vareps}\bigl(\tilde{\sol}^{\vareps,1}(t,x^\vareps_{i+1}) - \tilde{\sol}^{\vareps,1}(t,x^\vareps_{i})\bigr) {+} \frac{b}{2\vareps}\int_{x_i}^{x_{i+1}} \partial^3_{x} \solSinf(t,x) (x_{i+1} - x)^2\, \diff x.
\]
whereas we keep \eqref{eq:eq-diff-asympt-0b} for $i=i_0$.
Multiplying this equation by $\tilde{\sol}_i(t) \coloneqq \tilde{\sol}^{\vareps,1}(t,x^\vareps_i)$, we obtain for $i >i_0$
\begin{align*}
	{\f{1}{2}}\frac{\diff}{\diff t} | \tilde{u}_i|^2 &= \frac{b}{\vareps}\left(\tilde{u}_i \tilde{u}_{i+1}-(\tilde{u}_i)^2\right) {+} \frac{b}{6\vareps}\int_{x_i}^{x_{i+1}} \tilde{u}_i \partial^3_{x} \solSinf(t,x) (x_{i+1} - x)^2\, \diff x  \\ 
&=\frac{b}{2\vareps}\left(|\tilde{u}_{i+1}|^2-|\tilde{u}_i|^2\right)-\frac{b}{2\vareps}(\tilde{u}_{i+1}-\tilde{u}_i)^2 {+} \frac{b}{2\vareps}\int_{x_i}^{x_{i+1}} \tilde{u}_i \partial^3_{x} \solSinf(t,x) (x_{i+1} - x)^2\, \diff x,
\end{align*}
and still \eqref{eq:eq-diff-asympt-0c} for $i=i_0$.
By summing these quantities --~with $|\tilde{u}_i(t)|^2 \xrightarrow[]{i\to \infty} 0$~--  we find 
\begin{multline*}
	{\f{1}{2}}	\frac{\diff}{\diff t}  \sum_{i \geq i_0} \vareps|\tilde{u}_i(t)|^2 \leq -\frac{b}{2} |\tilde{u}_{i_0}|^2 {+} {b \tilde{u}_{i_0}(t)} \int_{x_{i_0}}^{x_{i_0+1}} \partial^2_{x} \solSinf(t,x) (x_{i+1} - x)\, \diff x \\ {+}  \sum_{i > i_0} \frac{b \tilde{u}_i(t)}2 \int_{x_i}^{x_{i+1}} \partial^3_{x} \solSinf(t,x) (x_{i+1} - x)^2\, \diff x. 
\end{multline*}
\inserted{We gather the first two terms of the right-hand side and use a Young's inequality
\begin{multline*}-\frac{b}{2} |\tilde{u}_{i_0}|^2 + {b \tilde{u}_{i_0}(t)} \int_{x_{i_0}}^{x_{i_0+1}} \partial^2_{x} \solSinf(t,x) (x_{i+1} - x)\, \diff x  
\\ \leq - \f{b}{4} |\tilde{u}_{i_0}|^2 + 
\f{b}{2}\left(\int_{x_{i_0}}^{x_{i_0+1}} \vert \partial^2_{x} \solSinf(t,x) (x_{i+1} - x)\vert \, \diff x  \right)^2
\\ \leq
\f{b}{2} \f{\ep^3}{3}\Vert \p^2_{xx}\solSinf (t,\cdot)\Vert_{\Ltwo}^2\lesssim \ep^3\Vert \sol^0 \Vert_{\Htwo}^2,
\end{multline*}
by Lemma~\ref{prop:second-order:reg}.
}
Then, we integrate in time and get 
\begin{multline}\label{eq:estimate-l2-3}
	\sum_{i \geq i_0} \vareps|\tilde{u}_i(t)|^2 \lesssim   \sum_{i \geq i_0} \vareps|\tilde{u}_i(0)|^2  +
{\ep^{{3}}\Vert \sol^0\Vert_{\Htwo}^2 t} \\
	+  \int_0^t \left(\sum_{i \geq i_0} \vareps|\tilde{u}_i(s)|^2\right)^{\frac{1}2} \left( \sum_{i \geq i_0} \frac{1}{\vareps} \left(\int_{x_i}^{x_{i+1}} \partial^3_{x} \solSinf(s,x) (x_{i+1} - x)^2 \diff x\right)^2\right)^{\frac{1}2} \diff s.
\end{multline}
Using Cauchy-Schwarz inequality for the last term of the right-hand side, we find
\begin{align*}
&\sum_{i \geq i_0}\frac{1}{\vareps} \left(\int_{x_i}^{x_{i+1}} \partial^3_{x} \solSinf(s,x) (x_{i+1} - x)^2 \diff x\right)^2 \\
&\hspace{2cm}\leq \sum_{i \geq i_0} \frac{1}{\vareps} \left( \int_{x_i}^{x_{i+1}} \abs{\partial^3_{x} \solFinf(s,x)}^2 \diff x \right) \left(  \int_{x_i}^{x_{i+1}} (x_{i+1} - x)^4 \diff x \right)\\
&\hspace{2cm}\lesssim \vareps^4  \int_{x_{i_0}}^{\infty} \abs{\partial^3_{x} \solSinf(s,x)}^2 \diff x ,
\end{align*}
leading to
\begin{multline}\label{eq:estimate-order1-l2}
	\sum_{i \geq i_0} \vareps|\tilde{u}_i(t)|^2 \lesssim   \sum_{i \geq i_0} \vareps|\tilde{u}_i(0)|^2 
	+ {\ep^{3}\Vert \sol^0\Vert_{\Htwo}^2 T}\\ 
	 + \vareps^2 \int_0^t \left(\sum_{i \geq i_0} \vareps|\tilde{u}_i(s)|^2\right)^{\frac{1}2} \left( \int_{0}^{\infty} |\partial_{x}^3 \solSinf  (s,x)|^2 \diff x\right)^{\frac{1}2} \diff s
\end{multline}
Hence applying the Gronwall Lemma~\ref{lemma:Gronwall} to \eqref{eq:estimate-order1-l2}, we obtain
\begin{multline*}
	\left(\sum_{i \geq i_0} \vareps|\tilde{u}_i(t)|^2\right)^{\frac{1}2} \lesssim \ep^\alpha +  \ep^{\f{3}{2}} t^{\f{1}{2}} \norm{\sol^0}_{\Htwo[\R^+]} + 
	\ep^2 \int_0^t \left( \int_{0}^{\infty} |\partial_{x}^3 \solSinf  (s,x)|^2 \diff x\right)^{\frac{1}2} \diff s
\\
\lesssim 
\ep^\alpha +  \ep^{\f{3}{2}} t^{\f{1}{2}} \norm{\sol^0}_{\Htwo[\R^+]} 
+
\sqrt{ \int_0^t \ep^3 \diff s} \sqrt{\int_0^t \ep \int_{0}^{\infty} |\partial_{x}^3 \solSinf  (s,x)|^2 \diff x \diff s}
\\
\lesssim
\ep^\alpha +  \ep^{\f{3}{2}} t^{\f{1}{2}} \norm{\sol^0}_{\Htwo[\R^+]} ,
\end{multline*}	
on which we have applied Lemma~\ref{prop:second-order:reg} to estimate the term with $\vert \p^3_x \solSinf \vert.$
This concludes the proof.
 \end{proof}

{\inserted{\begin{remark}
As for the first order system, our proof relies  on a Taylor-Lagrange expansion and adequate estimates in $L^2$ type spaces. Under our set of regularity assumptions, we expect these convergence rates - of order $\ep$ for the first order system, $\ep^{\f{3}{2}}$ for the second order one - to be optimal. 
 Let us  discuss briefly the link between these estimates and the existing ones, which have been most often carried out under more general assumptions for the full  Becker-D\"oring system.

We chose to act on the timescale rather than on the average size to introduce the parameter $\ep;$ however, with our choice $x_i^\ep=\ep i,$ it is clear that $\ep$ is in the order of the inverse of the average size of the polymers, as standard. In several studies~\cite{ColletGoudonPoupaudVasseur_2002,Niethammer_2003,LaurencotMischler_2002,Schlichting_2019},  weak convergence results in certain spaces of measures are obtained to the  Lifshitz-Slyozov system (the first order limit). Of note, the weak convergence may take place without any regularity assumption on the initial condition, except some uniform boundedness of moments: only regularity of the reaction rates (obvious in our case since the depolymerisation rate is constant) is needed. Recently, in~\cite{Schlichting_2019},  convergence results along so-called "curves of finite action"  are obtained through a gradient flow approach. 

Up to our knowledge, the only article where a quantitative rate of convergence between the Becker-D\"oring system and its Fokker-Plank ({\it i.e.} the second order) approximation is obtained is~\cite{StoltzTerrier_2019},  in the case of infinitely differentiable initial data. As in our case, the proof is based on estimates on the remainder of the Taylor expansion, and the authors link  the size of the considered clusters - which tend to infinity in large times in the case of gelation - with the rate of convergence, which is, in an $L^\infty$ norm, of order $n^{-\gamma/2}$ when the reaction rates grow as $x^\gamma$ and $n$ is the cluster size. 
\end{remark}
}}

\section{Setting of the inverse problems}
\label{sec:inverse}

Our objective is to reconstruct the initial polymer distribution using a measurement of the first or second moment of the concentration function. In this respect, we prefer to rely on an asymptotic formulation rather than  the discrete model \eqref{eq:depol-ode}. Indeed, asymptotic models are classically simpler to analyse in term of observability and offer the benefit of reducing the dimensionality of the discrete model when relying on a controlled coarser discretization of the asymptotic continuous model. Here, we aim at evaluating the interest of relying on the first order asymptotics or the second order asymptotics in the initial distribution reconstruction. We face a classical accuracy versus stability trade-off: the second order asymptotic model is a more accurate modeling choice but leads to stronger ill-posedness when setting up the inverse reconstruction. 
\inserted{\subsection{The Becker-D\"oring inverse problem}
Let us first set the original Becker-D\"oring inverse problem. We assume that one of the first three moments has been measured during the time period $[0,{\mathcal T}]$, namely
\inserted{\[
	M_k(\tau) \coloneqq  \sum\limits_{i=i_0+1}^\infty i^k C_i (\tau), \quad \tau\in [0,{\mathcal T}],
\] 
for some $k \in \{0,1,2\},$ from which we expect to reconstruct $C_i(0)$.} The fact that the measurement only begins for $i\geq i_0+1$ and does not include $i_0$ is meant to model the fact that smallest polymers are undetectable, the detection level being identified as $i_0$.  From~\eqref{eq:depol-ode}, we compute the differential system for the moments:
\begin{equation}\label{eq:momentsBD:exact}\f{\diff M_k}{\diff \tau}=-b\sum\limits_{i_0+1}^\infty \left(i^k-(i-1)^k\right)C_i - bi_0^k C_{i_0+1},
\end{equation}
so that for the three first moments we have
\begin{equation}\label{eq:observ-BD0}
	\frac{\diff}{\diff \tau} M_k = 
	\begin{cases}
		\inserted{-}b C_{i_0+1} (\tau), & \text{if } k=0, \\
		- b   M_{0}(\tau)-b i_0 C_{i_0+1} (\tau), & \text{if } k=1, 	
		 \\
		- 2b   M_{1} +bM_0 -bi_0^2 C_{i_0+1}, & \text{if } k= 2.	
	\end{cases}
\end{equation}
To make the link with the first and second order approximate systems, we introduce the appropriate scaling for the moments, defining
\[M_k^\ep (t)\coloneqq \ep^{k+1} M_k(\f{t}{\ep})=\ep\sum\limits_{i=i_0+1}^\infty (\ep i)^k c_i^\ep ({t}) .\]
From~\eqref{eq:momentsBD:exact}, we derive the system satisfied by $\mu_k^\ep$ and neglect the terms of the kind $\ep\sum\limits_i \ep^k i^\ell$ with $\ell \leq k-2,$ since they are (formally) of order at least $\ep^2$. We obtain:
\begin{equation}\label{eq:observ-BD}
	\frac{\diff}{\diff t} M_k^\ep = 
	\begin{cases}
		\inserted{-}b c_{i_0+1}^\ep (t), & \text{if } k=0, \\
		- b   M^\ep_{0}(t)-\ep b i_0 c_{i_0+1}^\ep (t), & \text{if } k=1, 	
		 \\
		- 2 b   M^\ep_{1} + \ep b M_{0}^\ep + O(\ep^2), & \text{if } k= 2.	
	\end{cases}
\end{equation}
}

\subsection{The pure advection setting of the inverse problem}
\label{subsec:inverse:advection}
Let us start by recalling some results obtained in \cite{armiento:2016} when using the pure advection asymptotic model \eqref{eq:first-order}. 
We typically consider that there exists $L$ such that
$\text{supp}(\solFinf(0,\cdot)) \subset [0,L)$ imposing that  
\[
	\forall t\in [0,T], \quad \text{supp}(\solFinf(t,\cdot)) \subset [0,L)
\]
so that we can rely on the bounded, hence computable, version of \eqref{eq:first-order}, namely
\begin{equation}
\label{eq:first-order-L}
\left\{
\begin{aligned}
	\partial_t \solF - b \partial_x \solF&=0,&  (t,x) \in  [0,T]\times [0,L] ,\\
	\solF(0,x) &= \uin(x), & x \in [0,L], \\
	\inserted{\solF(t,L)}&\inserted{=0,}& \inserted{t\in [0,T]. }
\end{aligned}\right.
\end{equation}
From the method of characteristics we have
\begin{equation}\label{eq:initial-order-0}
	\solF(t,0) = \solF(bt,0) = \solF_0(bt), \quad t \in [0,T].
\end{equation}
\inserted{The question is now to relate the continuous moments for the transport equation solution, that we denote $\mu_k$, to the moments of the discrete system $M_k^\ep.$ For this first order approximation, since we neglect all terms of order $\ep$ and above, we can identify $\mu_k$ with $M_k^\ep$ and $c_{i_0+1}^\ep$ with $\solF(t,0).$ We obtain the system:
\begin{equation}\label{eq:observ-LS}
	\frac{\diff}{\diff t} \mu_k = 
	\begin{cases}
		\inserted{-}b u(t,0), & \text{if } k=0, \\
		- b   \mu_{0}(t)& \text{if } k=1, 	
		 \\
		- b k  \mu_{k-1}  & \text{if } k\geq 2.	
	\end{cases}
\end{equation}
We notice that this system is identical to the one obtained directly by multiplying~\eqref{eq:first-order-L} by $x^k$ and integrate in space.}
Therefore, to reconstruct $\solF_0$ from an observed $M_k$, identified (up to the rescaling) with $\mu_k,$ with $k \in \{0,1,2\}$ given, we have the stability estimate
\begin{equation}\label{eq:stability-order-0}
	\forall T > T_0 \coloneqq \frac{1}b, \quad \| \solF^0 \|^2_{\Ltwo} \lesssim \norm{\frac{\diff^{k{+1}}}{\diff t^{k{+1}}} \inserted{\mu}_k}_{\Ltwo[0,T]}^2.
\end{equation}
This leads us in \cite{armiento:2016} to saying that if we define 
\[
	\Psi_T^{\solF^0 \to {\inserted{\mu}_{k}}} : \left|
	\begin{aligned}
		\Ltwo[0,L] &\to \Ltwo[0,T] \\
		\solF^0 &\mapsto y : ([0,T] \in t \mapsto  \inserted{\mu}_k(t) \in \R)
	\end{aligned}\right.
\]
inverting $\Psi_T^{\solF^0 \to \inserted{\mu}_{k}}$ is  ill-posed of order $k+1$, sometimes called \emph{mildly ill-posed}~\cite{wahba1980ill}.

But we could also divide the inverse problem into two successive inverse problems by introducing 
\[
	\Psi_T^{\solF^0 \to \text{Tr}} : \left|
	\begin{aligned}
		\Ltwo[0,L] &\to \Ltwo[0,T] \\
		\solF^0 &\mapsto y : ([0,T] \ni t \mapsto  \solF(t,0) \in \R)
	\end{aligned}\right.
\]
where $\solF$ is solution of \eqref{eq:first-order-L}
and then
\[
	\Psi_T^{\text{Tr} \to \inserted{\mu}_{k}} : \left|
	\begin{aligned}
		\Ltwo[0,L] &\to \Ltwo[0,T] \\
		\solF(t,0) &\mapsto y : ([0,T] \ni t \mapsto \inserted{\mu}_k(t)  \in \R)
	\end{aligned}\right.
\]
where $\inserted{\mu}_k$ is solution of \eqref{eq:observ-LS}, so that we have
\[
	\Psi_T^{\solF^0 \to \mu^\ep_{k}} = \Psi_T^{\text{Tr} \to \inserted{\mu}_{k}} \circ \Psi_T^{\solF^0 \to \text{Tr}}.
\]
Here inverting $\Psi_T^{\text{Tr} \to \mu_{k}}$ is a mildly ill-posed problem of order $k{+1}$, whereas inverting $\Psi_T^{\solF^0 \to \text{Tr}}$ is well-posed as a direct consequence of \eqref{eq:initial-order-0}. 

\subsection{The advection-diffusion inverse problem}
\label{subsec:inverse:advection-diffusion}
We  proceed with the study of the same reconstruction problem using this time the advection-diffusion problem which was proved to be a more accurate ``direct'' model. 
\inserted{Expecting an accuracy of order at least $\ep^{3/2},$ we keep the term of order $\ep$ in~\eqref{eq:observ-BD}, and as for the pure advection approximation we identify  $c_{i_0+1}^\ep (t)$ with $\solS(t,0).$ 
We define
\[\mu_k^\ep\coloneqq \int\limits_0^\infty x^k \solS(t,x)\diff x. \]
Contrarily to the first order approximation, we cannot directly identify $M_k^\ep$ with $\mu_k^\ep.$ Carrying out the same asymptotic expansion as done in Section~\ref{subsec:direct:advection-diffusion} to justify the transport boundary condition, we obtain
\begin{equation}\label{eq:equivmoments}M_0^\ep = \mu_0^\ep +\f{\ep}{2} \solS(t,0)+O(\ep^2),\qquad M_k^\ep = \mu_k^\ep +k(i_0+1) \ep \mu_{k-1}^\ep+ O(\ep^2) \quad {\text{if }}k\geq 1.  \end{equation}
To obtain a system equivalent to~\eqref{eq:observ-BD} for $\mu_k^\ep,$ we may either multiply~\eqref{eq:second-order} with $x^k$ and integrate in space or use the above equality and replace in~\eqref{eq:observ-BD}. We get
\begin{equation}\label{eq:observ-AdvDiff}
	\frac{\diff}{\diff t} \mu_k^\ep  = 
	\begin{cases}
		\inserted{-}b \solS (t,0)-\f{\ep }{2}\p_t \solS(t,0), & \text{if } k=0, \\
		- b  \mu_0^\ep  +\f{\ep b}{2}  \solS (t,0), & \text{if } k=1, 	
		 \\
		- 2b \mu_1^\ep + \ep b\mu_0^\ep   & \text{if } k=2.
	\end{cases}
\end{equation}
We can thus follow two heuristically equivalent strategies in order to estimate $\solS(t,0)$ - other said, to invert $\Psi_{T,\vareps}^{\text{Tr} \to \inserted{M}^\ep_{k}}$.
\begin{enumerate}
\item From the observation of $M_k^\ep,$ invert the system~\eqref{eq:observ-BD} to estimate $c_{i_0+1}^\ep (t),$ then use $c_{i_0+1}^\ep (t) =\solS(t,0)+O(\ep^2).$
\item From the observation of $M_k^\ep,$ link the discrete moments $M_k^\ep$ to the continuous ones $\mu_k^\ep$ and the boundary $\solS(t,0)$ thanks to~\eqref{eq:equivmoments}; use these relations, together with the system~\eqref{eq:observ-AdvDiff},  to estimate $\solS(t,0)$ from  $M_k^\ep$.
\end{enumerate}
In practice, we use a third "blind" strategy, which inverts directly $\Psi_{T,\vareps}^{\uin \to \inserted{M}^\ep_{k}}$ without using the  decomposition 
\[ \Psi_{T,\vareps}^{\text{Tr} \to \inserted{M}^\ep_{k}}=   \Psi_{T,\vareps}^{\text{Tr} \to \inserted{M}^\ep_{k}}\circ \Psi_{T,\vareps}^{\uin \to \text{Tr}},\]
 see Section~\ref{sec:num}. 
\inserted{As for the first-order inverse problem, we} decompose the inverse problem into
\[
	\Psi_{T,\vareps}^{\solF^0 \to \text{Tr}} : \left|
	\begin{aligned}
		\Ltwo[0,L] &\to \Ltwo[0,T] \\
		\uin &\mapsto y : ([0,T] \ni t \mapsto  \solS(t,0) \in \R)
	\end{aligned}\right.
\]
where $\solS$ is solution of
\begin{equation}\label{eq:second-order-L}
\left\{
\begin{aligned}
        \partial_t \solS-b \partial_x\solS- \f{b\ep}{2}\partial_{x}^2 \solS &= 0, & (t,x)\in [0,T]\times [0,L], \\
        \partial_t\solS(t,0) - b \partial_x \solS(t,0) &=0, & t\in[0,T],  \\
        \solS(t,L) &=0, & t\in[0,T], \\
        \solS(0,x) &= \uin(x),&  x\in (0,L),
\end{aligned}
\right.
\end{equation}
and then, as before,
\[
	\Psi_{T,\vareps}^{\text{Tr} \to \mu_{k}^\ep} : \left|
	\begin{aligned}
		\Ltwo[0,\inserted{T}] &\to \Ltwo[0,T] \\
		\solSinf(t,0) &\mapsto y : ([0,T] \ni t \mapsto \inserted{\mu_k^\ep}(t)  \in \R)
	\end{aligned}\right.
\]
where $\inserted{\mu_k^\ep}$ is solution of \eqref{eq:observ-AdvDiff}  so that we can ultimately invert 
\[
	\Psi_{T,\vareps}^{\uin \to \inserted{\mu}^\ep_{k}} =   \Psi_{T,\vareps}^{\text{Tr} \to \inserted{\mu}^\ep_{k}}\circ \Psi_{T,\vareps}^{\uin \to \text{Tr}}.
\]

We would like to point out that in the advection setting the operator $\Psi_{T,\vareps}^{{\text{Tr} \to \inserted{\mu}^\ep_k}}$ inversion is now ill-posed of degree {$\lfloor \f{k+1}{2} \rfloor$} \inserted{for $k\geq 0$,} as opposed to {$k+1$} in the advection setting, \inserted{due to the non-neglected term of order $\ep$ in the equations for $\mu_k^\ep:$ the degree of ill-posedness to invert $\Psi_{T,\vareps}^{\text{Tr} \to \mu_{k}^\ep}$ is $+1$ the degree of ill-posedness to invert $\Psi_{T,\vareps}^{\text{Tr} \to \mu_{k-2}^\ep}$ for $k\geq 2,$ and is equal to $0$ for $k=0$ and to $1$ for $k=1$.} 
Indeed, we have from \eqref{eq:observ-AdvDiff} that 
\inserted{\[
	\frac{\diff^2}{\diff t^2} \mu_1^\ep = b^2  \solS(t,0) - \ep b (i_0-\f{1}{2}) \p_t  \solS(t,0) =-\ep b (i_0-\f{1}{2}) \p_t \left(\solS(t,0)e^{-\f{b}{\ep (i_0-1/2)} t} \right)e^{\f{b}{\ep (i_0-1/2)} t},
\]}
which gives 
\begin{align*}
	\norm{\solS(t,0)}_{\Ltwo[0,T]}^2 +  |\solS(0,0)|^2 & \lesssim \Cst(\vareps^{-1}) \norm{ \int_0^t  \frac{\diff}{\diff t} \Bigl(e^{\inserted{-\frac{bt}{\varepsilon (i_0-1/2)}}} \solS(t,0) \Bigr) \diff t }^2 \\
	& \lesssim \Cst(\vareps^{-1}) \norm{\int_0^t e^{\frac{-bt}{\varepsilon (i_0-1/2)}} \frac{\diff^2}{\diff t^2} \mu_1^\ep \diff t}^2 \\
	& \lesssim \Cst(\vareps^{-1})  \Bigl[\norm{\deriv{t}\mu_1^\ep}_{\Ltwo[0,T]}^2 + |\deriv{t} \mu_1^\ep(0)|^2 \Bigr],
\end{align*}
On the left-hand side we have a $\mathrm{H}^{\frac{1}2}$-like norm controlled by a $\mathrm{H}^{\frac{3}2}$-like norm of the measured moment $\mu_0$.  Then \eqref{eq:observ-AdvDiff} shows that we lose one order of derivative for $\mu_{k}^\ep$ compared to $\mu_{k-2}^\ep$.
The stability constant however explodes when $\vareps$ decreases as expected from the advection setting result: in such a case we are back to the pure transport problem, mildly ill-posed of order \inserted{two for $k=1$}.

However, we prove in the following sections that the inversion of $\Psi_{T,\vareps}^{\uin \to \text{Tr}}$, which was a well-posed problem for the \inserted{pure advection} problem, is severely ill-posed due to the diffusion, hence the non-reversibility of the dynamics. This obligates us to a compromise stability-precision in the initial condition reconstruction using asymptotic models. 

\subsection{Analysis of the advection-diffusion problem on a bounded domain}
{\inserted{Before turning to the inverse problem solution, let us study the direct problem~\eqref{eq:second-order-L}, since two inequalities are necessary to prove observability. In Proposition~\ref{prop:diffusion-energy-estimate}, we prove a dissipativity inequality, that is  useful for the proof of a final time observability  (see Section~\ref{sec:carleman}) ;  the proof also provides us with the well-posedness of~\eqref{eq:second-order-L}, obtained in a similar manner as for~\eqref{eq:second-order}. In Proposition~\ref{prop:logconv}, we prove a reverse inequality bounding the initial condition by the solution at time $t$: this is used in Section~\ref{sec:inverse:anal}, Theorem~\ref{th:initobserv} to obtain an initial time observability.}

\label{subsec:expostab}

As in Section~\ref{sec:second-order}, we introduce the space
$\stateSpace =   \Ltwo\times \R$, equipped with the norm
\[
    \forall \state = (u,v) \in \stateSpace, \quad \norm{\state}_{\stateSpace}^2 = \norm{u}_{\Ltwo}^2 + |v|^2,
\]
and the
operator $(\modelOp,\mathcal{D}(\modelOp))$ defined by
\begin{equation}\label{eq:operator-L}
    \forall z = \begin{pmatrix}
    u \\ v
    \end{pmatrix} \in \mathcal{D}(A), \quad 
    \modelOp \state = \begin{pmatrix}
    - b u' - \f{b \vareps}{2} u'' \\
    - b\sqrt{\f{\ep}{2}}  u'(0)
    \end{pmatrix},
\end{equation}
where
\begin{equation}\label{eq:operator-domain-L}
    \mathcal{D}(\modelOp) = \left\{ \state=\begin{pmatrix}u \\ v \end{pmatrix}\in \stateSpace \, \Big| \, u \in H^2(0,L) \cap \HoneR, \, \sqrt{\f{\varepsilon}{2}}\, u(0) =  v \right\},
\end{equation}
where $\HoneR=\{u\in \Hone,\;u(L)=0\}$ and
such that a solution $\sol$ of \eqref{eq:second-order-L} is defined from the $\state = \begin{pmatrix}\sol \\ v \end{pmatrix}$ solution of the initial value problem
\begin{equation}\label{eq:semigroup-L}
\begin{cases}
	\dsp \deriv{t} \state + \modelOp \state = 0, \quad t > 0 \\
	\state(0) = \state^0 
\end{cases}
\end{equation}
with $z^0 = \begin{pmatrix} \sol^0(x) \\ \sqrt{\frac{\vareps}{2}} \sol^0(0) \end{pmatrix}$. The proof of the well-posedness is then identical to that of Proposition~\ref{prop:second-order:exist} carried out on $\R^+$ instead of $(0,L),$ and we moreover have a dissipativity estimate, given in the following proposition. 

\begin{proposition}\label{prop:diffusion-energy-estimate}
We have the exponential stability estimate
\begin{equation}\label{ineq:expostab}
	\norm{\state(t)}_{\stateSpace} \leq e^{\frac{2L-bt}{\vareps} } \norm{\state(0)}_\stateSpace,  
\end{equation}
for any mild solution $\state \in C^0([0,T],\stateSpace)$ of~\eqref{eq:operator-L}. 
\end{proposition}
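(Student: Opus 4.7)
The plan is to kill the advection term by a spatial change of unknown, turning the equation into a purely dissipative diffusion with a zero-order damping whose rate matches precisely the exponent $b/\varepsilon$ appearing in the claim. I would first argue for strict solutions $\state\in \Czero[(0,T);\mathcal{D}(\modelOp)]\cap \Cone[(0,T);\stateSpace]$, which exist for $\state^0\in \mathcal{D}(\modelOp)$ by the bounded-domain analogue of Proposition~\ref{prop:second-order:exist}; the general mild case then follows by density of $\mathcal{D}(\modelOp)$ in $\stateSpace$ and strong continuity of the semigroup.

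For a strict solution, I would set $w(t,x):=e^{x/\varepsilon}u(t,x)$. A direct computation starting from $\partial_t u=b\,\partial_x u+\tfrac{b\varepsilon}{2}\partial_{xx}u$ shows that the advective contribution cancels and
\[
    \partial_t w=\frac{b\varepsilon}{2}\,\partial_{xx}w-\frac{b}{2\varepsilon}w,\qquad w(t,L)=0,\qquad w(t,0)=u(t,0),
\]
so in particular $v(t)=\sqrt{\varepsilon/2}\,w(t,0)$. The price for this simplification is the boundary-layer comparison $\|u\|_{\Ltwo}^2\leq \|w\|_{\Ltwo}^2\leq e^{2L/\varepsilon}\|u\|_{\Ltwo}^2$, which is precisely what produces the prefactor $e^{2L/\varepsilon}$ in the final estimate.

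I would then take as Lyapunov functional $W(t):=\tfrac{1}{2}\|w(t)\|_{\Ltwo}^2+\tfrac{1}{2}|v(t)|^2$ and differentiate it. Integration by parts on $\int_0^L w\,\partial_{xx}w\,\diff x$, together with $w(L)=0$ and $\partial_x w(0)=u(0)/\varepsilon+u'(0)$, produces a boundary trace that splits into a good dissipative part $-\tfrac{b}{2}|u(0)|^2$ and a cross term $-\tfrac{b\varepsilon}{2}u(0)u'(0)$. The essential, non-routine, step is that this cross term is \emph{exactly} compensated by $v\dot v=\tfrac{b\varepsilon}{2}u(0)u'(0)$ coming from the scalar dynamics $\dot v=b\sqrt{\varepsilon/2}\,u'(0)$, a cancellation that rests entirely on the domain constraint $\sqrt{\varepsilon/2}\,u(0)=v$. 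After this cancellation one is left with
\[
    \frac{\diff}{\diff t}W=-\frac{b\varepsilon}{2}\int_0^L|\partial_x w|^2\,\diff x-\frac{b}{2}|u(0)|^2-\frac{b}{2\varepsilon}\int_0^L w^2\,\diff x.
\]

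Writing $\tfrac{b}{2}|u(0)|^2=\tfrac{b}{\varepsilon}|v|^2$ and discarding the non-positive gradient term gives $\tfrac{\diff}{\diff t}W\leq -\tfrac{b}{\varepsilon}W$, hence $W(t)\leq e^{-bt/\varepsilon}W(0)$ by Gronwall. Combined with $\|\state(t)\|_\stateSpace^2\leq 2W(t)$ and $W(0)\leq \tfrac{1}{2}e^{2L/\varepsilon}\|\state(0)\|_\stateSpace^2$, this yields the inequality~\eqref{ineq:expostab} (in fact with the sharper exponent $(2L-bt)/(2\varepsilon)$). The main obstacles are thus to guess the correct weight $e^{x/\varepsilon}$ that decouples advection from diffusion, and to verify the boundary cancellation between the scalar component dynamics and the trace produced by the integration by parts; without these two ingredients one would only recover the accretivity bound $\|\state(t)\|_\stateSpace\leq \|\state(0)\|_\stateSpace$ already inherent to Proposition~\ref{prop:second-order:exist}.
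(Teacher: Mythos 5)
Your proof is correct, and its central idea --- the exponential weight $e^{x/\varepsilon}$ that cancels the advection and produces the zero-order damping $\frac{b}{2\varepsilon}$ --- is exactly the one the paper uses, leading to the same transformed system \eqref{eq:new-unknown-model} and the same origin of the prefactor $e^{2L/\varepsilon}$. Where you genuinely diverge is in how the decay rate is extracted: the paper proves that the transformed operator $\modelOpShift$ is self-adjoint, maximal accretive with compact resolvent, expands the solution on a Hilbert basis of eigenvectors, and reads the rate off the lower bound $\lambda_n\geq \frac{b}{2\varepsilon}$ furnished by the coercivity of $\modelFormShift$; you instead differentiate the energy $W(t)$ directly, check the boundary cancellation between the trace term $-\frac{b\varepsilon}{2}u(0)u'(0)$ and the scalar dynamics $v\dot v$ (which is precisely the pointwise counterpart of the paper's symmetry computation for $\modelOpShift$), and close with Gronwall. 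Your route is more elementary --- it needs neither Rellich--Kondrachov nor the spectral theorem --- and it yields the identical constant; the paper's spectral setup is not wasted, however, since the self-adjointness of $\modelOpShift$ established there is reused in the log-convexity argument of Proposition~\ref{prop:logconv}. One small caveat: like the paper's own last display, your computation bounds the \emph{squared} norm by $e^{(2L-bt)/\varepsilon}$ times the squared initial norm, so the unsquared estimate comes with exponent $(2L-bt)/(2\varepsilon)$; your parenthetical claim that this is sharper than \eqref{ineq:expostab} is only true while $bt\leq 2L$, and for larger times it is the literal statement of the proposition that is the stronger one --- a discrepancy inherited from the paper's own write-up rather than a flaw in your argument.
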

 \begin{proof}
We first assume the initial condition $\state^0 \in \mathcal{D}(\modelOp)$, and we  conclude by density. The proof is decomposed in 3 steps. We (1) proceed to a change of unknown \inserted{which symetrises the operator} in order to (2) use a spectral decomposition on the new system and (3) obtain energy estimates for the initial problem. \\

\textup{(1)}\hskip\labelsep \emph{Change of unknown}~-- Let us introduce the function 
\[
	\solShift(t,x) = e^{{\f{x}{\ep}}} \sol(t,x),
\]
so that
\[
	\partial_x \sol(t,x) = -\frac{1}{\vareps}  e^{\frac{-x}{\vareps}} \solShift(t,x) + e^{-\frac{x}{\vareps}} \partial_x \solShift(t,x)
\]
and
\[
	\partial_{xx}^2 \sol(t,x) = \frac{1}{\vareps^2} e^{\frac{-x}{\vareps}} \solShift(t,x) - \frac{\inserted{2}}{\vareps} e^{-\frac{x}{\vareps}} \partial_x \solShift(t,x) + e^{\frac{-x}{\vareps}} \partial_{xx}^2 \solShift(t,x).
\]
Therefore, $\solShift$ is the solution of 
\begin{equation}\label{eq:new-unknown-model}
\left\{
\begin{aligned}
        \partial_t \solShift(t,x)  + \frac{b}{2\vareps} \solShift(t,x) - \f{b\vareps}{2} \partial_{xx}^2 \solShift(t,x) &= 0, & (t,x)\in  [0,T]\times (0,L), \\
		\solShift(t,L) &= 0, & t\in[0,T],         \\ 
        \partial_t \solShift(t,0) - b \partial_x \solShift(t,0) + \frac{b}{\vareps}\solShift(t,0) &=0, & t\in[0,T],  \\
        \solShift(0,x) &= e^{\frac{x}{\vareps}}\sol^0(x),&  x\in (0,L).
\end{aligned}
\right.
\end{equation}
We hence introduce the operator $(\modelOpShift,\mathcal{D}(\modelOpShift))$ defined by
\begin{equation}\label{eq:operator-new-unknown}
    \forall z = \begin{pmatrix}
    \sol \\ v
    \end{pmatrix} \in \mathcal{D}(\modelOpShift), \quad 
    \modelOpShift \state = \begin{pmatrix}
    \frac{b}{2\vareps} \sol - \f{b\vareps}{2} \sol'' \\[0.1cm]
   {-b\sqrt{\f{\ep}{2}}  u'(0) + \f{b}{\ep}v} 
    \end{pmatrix},
\end{equation}
where $\mathcal{D}(\modelOpShift) = \mathcal{D}(\modelOp)$. The associated bilinear form
 is
\[
	\modelFormShift((\sol,v),(w,k)) = \f{b \vareps}{2} (u', w')_{\Ltwo} + \frac{b}{2\varepsilon}(u,w)_{\Ltwo}  + \frac{b}{\varepsilon}vk,
\]
 defined on 
$${\mathcal V}\coloneqq\Bigl\{ (u,v)\in \HoneR \times \R \, | \, \sqrt{\f{\ep}{2}} u(0) = v \Bigr\}$$
since it is such that
\[
\begin{split}
	\forall \state = \begin{pmatrix}
    \sol \\ v
    \end{pmatrix}&\in \mathcal{V}, \adjoint =  \begin{pmatrix}
    w \\ k
    \end{pmatrix}\in \mathcal{V}, \\
    (\modelOpShift z,q) &= -\int_0^L \f{b\vareps}{2} \sol''(x) w(x)\, \diff x + \int_0^L \frac{b}{2\vareps} \sol(x) w(x) \,\diff x  - \sqrt{\f{\ep}{2}} b u'(0) k + \frac{b}{\vareps} v k \\[8pt]
    &=\int_0^L \f{b\vareps}{2} \sol' w'\, \diff x + \f{b\varepsilon}{2} u'(0)w(0) + \int_0^L \frac{b}{2\vareps} \sol w\,\diff x  - \sqrt{\f{\ep}{2}} b u'(0) k + \frac{b}{\vareps} v k \\[8pt]
    &=\tilde{a}((\sol,v),(w,k)).
\end{split}
\]
We deduce that $\modelOpShift$ is a symmetric operator. Moreover, as  $\modelFormShift$ is coercive in $\mathcal{V}$, we prove as in Proposition~\ref{prop:second-order:exist} that $(\modelOpShift,{\mathcal D}(\modelOpShift))$ is maximal accretive.

\textup{(2)}\hskip\labelsep \emph{Spectral decomposition}~-- From \cite[Proposition 7.6]{brezis-book}, $\modelOpShift$ is therefore a self-adjoint positive operator. Moreover, $\modelOpShift$ is invertible with  $\modelOpShift^{-1} : \mathcal{\stateSpace} =  \Ltwo\times \R  \to \mathcal{D}(\modelOp) \subset  H^2(0,L)\times \R $, hence $\modelOpShift^{-1} \in \mathcal{L}(\stateSpace,\stateSpace)$ is compact from Rellich–Kondrachov theorem. We conclude that there exists a Hilbert basis of $\stateSpace$ composed of eigenvectors $(\varphi_n)_{n\in\N}$ of $\modelOpShift$ associated with a sequence of positive eigenvalues $(\lambda_n)_{n\in \N}$ with $\lambda_n^{-1} \xrightarrow{n\to+\infty} 0$. We additionally have that 
\begin{align}
	\forall z = \begin{pmatrix}
    \sol \\ v
    \end{pmatrix} \in {\mathcal{D}}(\modelOpShift), \quad \modelFormShift((\sol,v),(\sol,v)) &\geq \frac{b}{2\varepsilon}(u,u)_{L^2(0,L)}  + \frac{b}{\varepsilon} v^2 \notag\\
	&\geq \frac{b}{2\varepsilon} \norm{z}_{\stateSpace}^2
\end{align}
Hence, we have for all $n\in\N$,  $\lambda_n \geq \frac{b}{2\vareps}$.

\textup{(3)}\hskip\labelsep \emph{Energy estimate}~--
We denote $\varphi_n = (\mu_n,\kappa_n)$ the eigenvector decomposition in $\stateSpace$.
We have
\[
	\forall \state = \begin{pmatrix}
    \sol \\ v
    \end{pmatrix} \in \stateSpace, \quad \state = \sum_{k\in\N} (\state,\varphi_n)_\stateSpace \varphi_n = \sum_{k\in\N} \Bigl[ (\sol,\mu_n)_{\Ltwo} + v\kappa_n \Bigr] \begin{pmatrix}
    \mu_n \\ \kappa_n
    \end{pmatrix}.
\]
For any $\tilde{\state}^0 \in \stateSpace$, we deduce that $\tilde{\state}(t) = e^{-t \modelOpShift} \tilde{\state}^0$ satisfies
\[
	\tilde{\state} = \begin{pmatrix} \solShift \\ \tilde{v}\end{pmatrix}= e^{-t \modelOpShift} \solShift^0 = \sum_{k\in\N} e^{-\lambda_n t} (\tilde{\state}^0,\varphi_n)_\stateSpace \varphi_n.
\]
Therefore, we have
\[
\begin{aligned}
		\norm{\solShift(t)}_{L^2(0,L)}^2 + \abs{\tilde{v}(t)}^2 
		&\leq  e^{-2\lambda_0 t}\left(  \norm{\solShift^0}_{\Ltwo}^2+\vert \t v^0\vert^2\right).
\end{aligned}
\]
Moreover $\norm{\sol(t)}_{L^2(0,L)} \leq \norm{\solShift(t)}_{L^2(0,L)}$ and similarly
\[
	\norm{\solShift^0}_{\Ltwo}^2 = \int_0^L e^{2\frac{x}{\vareps}} |\sol^0(x)|^2 \, \diff x \leq e^{2\frac{L}{\vareps}} \norm{\sol^0}_{\Ltwo}^2.
\]
As we also have $\tilde{v} = v$, we finally conclude that 
\[
	\norm{\sol(t)}_{L^2(0,L)}^2 + \abs{v(t)}^2 \leq e^{\frac{2L-bt}{\vareps} } \left(\norm{\sol^0}_{L^2(0,L)}^2 + \abs{v^0}^2\right).
\]
By density of $\mathcal{D}(\modelOp)$ in $\stateSpace$, the result is still valid for any mild solution $\state = (\solShift,v) \in C^0([0,T],\stateSpace)$. 
\end{proof}
}}
\inserted{The reverse inequality, bounding the initial condition with respect to the final time solution, is known to be a much more difficult problem for dissipative second-order equations. We can however obtain an estimate by log-convexity arguments that follow the ideas in~\cite{Bardos:1973aa}, see also \cite{Phung:2004aa,vo:tel-02081052}.  This result, together with Carleman's inequalities (Section~\ref{sec:carleman}), is useful to obtain the initial time observability result stated in Theorem~\ref{th:initobserv}.}

\begin{proposition}\label{prop:logconv}
Let $\solShift$ be a solution of \eqref{eq:new-unknown-model} with initial condition in $\HoneR$, denote $\t\state(t)=\begin{pmatrix}\solShift(t,\cdot) \\ \sqrt{\f{\ep}{2}} \solShift(t,0) \end{pmatrix},$ we have the following initial condition estimate
\begin{equation}\label{eq:backward-estimate}
\left\|\t\state(0)\right\|_{\stateSpace} \leq \exp \left(\frac{\left(\modelOpShift \t\state(0),\t\state(0)\right)_\stateSpace}{\left\|\t\state(0)\right\|_{\stateSpace}^{2}} T\right)\left\|\t\state(T)\right\|_{\stateSpace}.
\end{equation}

\end{proposition}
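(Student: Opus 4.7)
The plan is to exploit the self-adjointness and positivity of $\modelOpShift$ in $\stateSpace$, established in the preceding proposition, to show that the map $t \mapsto \log \|\t\state(t)\|_\stateSpace^2$ is convex along the semigroup trajectory. The bound~\eqref{eq:backward-estimate} then follows by comparing this function to its tangent at $t=0$.

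First, assuming $\t\state^0 \in \mathcal{D}(\modelOpShift)$ non-trivial (the general case $\t\state^0 \in {\mathcal V}$ will follow by density), set $N(t) \coloneqq \|\t\state(t)\|_\stateSpace^2 > 0$. Using the evolution equation $\frac{\diff}{\diff t}\t\state = -\modelOpShift\t\state$ and the symmetry of $\modelOpShift$, a direct calculation gives
\begin{equation*}
N'(t) = -2\,(\modelOpShift\t\state(t),\t\state(t))_\stateSpace, \qquad N''(t) = 4\,\|\modelOpShift\t\state(t)\|_\stateSpace^2.
\end{equation*}

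Second, the Cauchy--Schwarz inequality in $\stateSpace$ yields $(\modelOpShift\t\state(t),\t\state(t))_\stateSpace^2 \leq \|\modelOpShift\t\state(t)\|_\stateSpace^2\,\|\t\state(t)\|_\stateSpace^2$, equivalently $(N'(t))^2 \leq N(t) N''(t)$, hence $(\log N)''(t) = (N N'' - (N')^2)/N^2 \geq 0$. Equivalently, using the spectral decomposition furnished by the preceding proposition, writing $\t\state^0 = \sum_n c_n \varphi_n$ with eigenvalues $\lambda_n \geq b/(2\vareps)$, one has $N(t) = \sum_n |c_n|^2 e^{-2\lambda_n t}$, and log-convexity follows from Cauchy--Schwarz in the form $N\bigl(\tfrac{s+t}{2}\bigr)^2 \leq N(s) N(t)$.

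Third, convexity of $\log N$ places the graph above its tangent at $t=0$:
\begin{equation*}
\log N(T) \geq \log N(0) + T\,(\log N)'(0) = \log N(0) - 2T\,\frac{(\modelOpShift\t\state^0,\t\state^0)_\stateSpace}{\|\t\state^0\|_\stateSpace^2}.
\end{equation*}
Rearranging, exponentiating and taking the square root produces exactly~\eqref{eq:backward-estimate}. The remaining subtlety is the regularity step: for $\t\state^0 \in {\mathcal V} \setminus \mathcal{D}(\modelOpShift)$, the quadratic form $(\modelOpShift\t\state^0,\t\state^0)_\stateSpace$ must be interpreted as $\modelFormShift(\t\state^0,\t\state^0)$, naturally defined on ${\mathcal V} \times {\mathcal V}$; density of $\mathcal{D}(\modelOpShift)$ in ${\mathcal V}$ for the energy norm then closes the argument. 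No deeper obstacle is anticipated, since the heavy lifting --- self-adjointness and the spectral decomposition of $\modelOpShift$ --- has already been done in the previous proposition.
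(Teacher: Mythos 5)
Your proof is correct and is essentially the paper's own argument in different clothing: the paper shows that the Rayleigh quotient $\mathcal{F}(t)=(\modelOpShift\t\state,\t\state)_\stateSpace/\|\t\state\|_\stateSpace^2=-\tfrac{1}{2}\tfrac{\diff}{\diff t}\log\|\t\state\|_\stateSpace^2$ is nonincreasing (its derivative being $-\|\modelOpShift\t\state-\mathcal{F}\t\state\|_\stateSpace^2/\|\t\state\|_\stateSpace^2\leq 0$, which is exactly your Cauchy--Schwarz inequality $(N')^2\leq N N''$), and integrating $\mathcal{F}(t)\leq\mathcal{F}(0)$ over $[0,T]$ is precisely your tangent-line comparison at $t=0$. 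Your extra remarks (the spectral reformulation and the interpretation of the quadratic form via $\modelFormShift$ for data only in $\mathcal{V}$) are sound refinements of the same argument.
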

\begin{proof}

\inserted{Let us recall that by definition $\f{\diff \t\state}{\diff t}=-\modelOpShift \t\state$, and define 
$${\mathcal F}:=\f{(\modelOpShift \t\state,\t\state)_\stateSpace}{\left\|\t\state\right\|_{\stateSpace}^{2}}=-\f{\f{1}{2}\f{\diff}{\diff t}\left\|\t\state\right\|_{\stateSpace}^{2}}{\left\|\t\state\right\|_{\stateSpace}^{2}}=-\f{1}{2}\f{\diff}{\diff t} \log \left(\left\|\t\state\right\|_{\stateSpace}^{2}\right).$$ We prove that $\mathcal F$ is decreasing by computing, using that $\modelOpShift$ is self-adjoint on ${\mathcal D}(\modelOp)$, \begin{align*}\f{1}{2}\f{d{\mathcal F}}{dt}&=\f{(\modelOpShift \t\state,\f{d\t\state}{dt})_\stateSpace\left\|\t\state\right\|_{\stateSpace}^{2}-(\t\state,\f{d\t\state}{dt})_\stateSpace (\modelOpShift \t\state,\t\state)_\stateSpace}{\left\|\t\state\right\|_{\stateSpace}^{4}}
=\f{(\modelOpShift \t\state,-\modelOpShift \t\state+{\mathcal F}(t) \t\state)_\stateSpace}{\left\|\t\state\right\|_{\stateSpace}^{2}}
\end{align*} 
Since $(\t\state,-\modelOpShift \t\state + {\mathcal F} (t) \t\state)_\stateSpace=-(\modelOpShift \t\state,\t\state)_\stateSpace + {\mathcal F} (t) \left\|\t\state\right\|_{\stateSpace}^{2}=0,$ we have
\begin{align*}\f{1}{2}\f{d{\mathcal F}}{dt}&=\f{(\modelOpShift \state,-\modelOpShift \state+{\mathcal F}(t) \state)_\stateSpace}{\left\|\state\right\|_{\stateSpace}^{2}}=\f{(\modelOpShift \t\state-{\mathcal F}(t)\t\state,-\modelOpShift \t\state+{\mathcal F}(t) \t\state)_\stateSpace}{\left\|\t\state\right\|_{\stateSpace}^{2}}\leq 0.
\end{align*} 
We thus have 
$${\mathcal F}(t)=-\f{1}{2}\f{\diff}{\diff t} \log \left(\left\|\t\state\right\|_{\stateSpace}^{2}\right)\leq {\mathcal F}(0),$$
hence
$$-\f{1}{2}\log \left(\left\|\t\state(T)\right\|_{\stateSpace}^{2}\right)\inserted{+}\f{1}{2}\log\left(\left\|\t\state(0)\right\|_{\stateSpace}^{2}\right)
=\log\left(\f{\left\|\t\state(0)\right\|_{\stateSpace}}{\left\|\t\state(T)\right\|_{\stateSpace}}\right)
\leq {\mathcal F}(0) T,$$
from which, taking the exponential, we obtain~\eqref{eq:backward-estimate}. 

}

\end{proof}

\section{Inverse problem solution}
\label{sec:inverse:anal}

\inserted{The aim of this section is to invert the operator $\Psi_{T,\vareps}^{u_0 \to \text{Tr}}$ defined in Section~\ref{subsec:inverse:advection-diffusion}. The injective character of the inverse is obtained thanks to an observability inequality, stated in Theorem~\ref{th:initobserv}. This result leads us to define an appropriate generalised Tikhonov regularisation, for which we prove an error estimate in Theorem~\ref{th:tikhonov}. The first and main ingredient for the observability inequality lies in a Carleman-type inequality~\eqref{ineq:Carleman}, to which the following subsection is devoted.}
\label{sec:observ}
\subsection{Carleman inequality and initial time observability}
\label{sec:carleman}

In this section, we \inserted{first} prove the Carleman estimate~\eqref{ineq:Carleman}, inspired from controllability inequalities obtained in~\cite{Cornilleau:2012dba}, see also~\cite{Coron:2005vd}. Together with the previous exponential stability result~\eqref{ineq:expostab}, it provides us with the final time observability of our advection-diffusion system; together with the log-convexity estimate~\eqref{eq:backward-estimate}, the initial time observability~\eqref{estim:observ:final} is obtained. 
\begin{theorem}
Let $\uin \in \HoneR\inserted{\cap \Htwo}$.

Let $\sol  \in \Czero[(0,T);\HoneR] \cap \Cone[(0,T);\Ltwo]$ the unique solution of~\eqref{eq:second-order-L}. There exist two constants $c>0$ and $C_C>0$, with $C_C\sim \inserted{\ep^{-2}T^{-5} (1+T)^3}e^{\f{c}{\ep}(1+\TM^{-1})}$, such that
\begin{multline}\label{ineq:Carleman}
 \Vert u\Vert_{L^2([\f{T}{4},\f{3T}{4}]\times[0,L])}^2
\lesssim
C_{C} \int_0^{\TM} \abs{\sol}^2(t,0) \diff t \sim \inserted{\ep^{-2}T^{-5} (1+T)^3}e^{\f{c}{\ep}(1+\TM^{-1})} \int_0^{\TM} \abs{\sol}^2(t,0) \diff t .
\end{multline}\label{coroll:Carleman}
\end{theorem}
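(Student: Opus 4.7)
The plan is to adapt the classical parabolic Carleman machinery to our small-diffusion setting, carefully tracking the dependence of all constants on $\varepsilon$ and $T$ so as to recover the claimed scaling $\varepsilon^{-2}T^{-5}(1+T)^3 e^{c/\varepsilon(1+T^{-1})}$. Following the strategy of \cite{Cornilleau:2012dba,Coron:2005vd}, I would first construct a weight of the form $\varphi(t,x)=\theta(t)\psi(x)$, with $\theta(t)=1/(t(T-t))$ (so that $\theta$ blows up at $t=0,T$ and is bounded by $16/T^2$ on $[T/4,3T/4]$), and $\psi(x)$ strictly increasing in $x$ with $\psi'>0,\ \psi''<0$, designed so that $\psi(L)-\psi(0)$ is controlled. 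The prefactor $s$ will eventually be chosen of the order $s\sim 1/\varepsilon$, which is what produces the exponential $e^{c/\varepsilon}$ in $C_C$; the $\varepsilon^{-2}$ and the polynomial factors in $T$ appear as various powers of $s\theta$ and $s\theta'$ after absorption.

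Next, I would set $w\coloneqq e^{-s\varphi}u$ and rewrite the equation satisfied by $u$ in \eqref{eq:second-order-L} as $Pu=0$ with $P=\partial_t-b\partial_x-\tfrac{b\varepsilon}{2}\partial_x^2$, splitting the conjugated operator $e^{-s\varphi}P(e^{s\varphi}\cdot)$ into a symmetric part $P_1$ and an antisymmetric part $P_2$. Squaring and integrating yields $\|P_1 w\|^2+\|P_2 w\|^2+2(P_1w,P_2w)=\|e^{-s\varphi}Pu\|^2=0$. The cross term $(P_1w,P_2w)$ is expanded via integration by parts in $(t,x)$, producing positive interior quantities (in $|\partial_x w|^2$ and $|w|^2$ weighted by powers of $s\theta$, $s\theta'$) plus boundary contributions at $x=0$ and $x=L$ and at $t=0,T$. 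The time boundary terms vanish because $e^{-s\varphi}$ has infinite-order zeros at $t=0,T$; the Dirichlet condition $u(t,L)=0$ kills most of the contribution at $x=L$; and the remaining term at $x=L$, of the ``wrong sign'', has to be absorbed by choosing $\psi$ appropriately large at $L$ (this is the standard reason for choosing $\psi$ monotone with observation on the opposite side).

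The subtle step will be the boundary at $x=0$, where we do \emph{not} have Dirichlet data but the transport-type relation $\partial_t u(t,0)=b\partial_x u(t,0)$. After integration by parts this produces terms involving $u(t,0)$, $\partial_x u(t,0)$, and $\partial_t u(t,0)$. Using the boundary equation itself, $\partial_x u(t,0)$ is replaced by $b^{-1}\partial_t u(t,0)$, and then an integration by parts in $t$ (with the vanishing of $e^{-s\varphi}$ at $t=0,T$) converts these into a term of the form $C\,s\theta(t)\,|u(t,0)|^2$. This boundary contribution, and only this one, is what is left uncontrolled after absorption, producing the right-hand side of \eqref{ineq:Carleman} modulo the weight. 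The main obstacle is precisely the careful bookkeeping of this boundary term: ensuring that the $s\theta$ weights at $x=0$ can be uniformly bounded by $\sup_{[0,T]} s\theta \lesssim s/T^2\sim 1/(\varepsilon T^2)$ and absorbing the various powers of $s$ without losing the claimed polynomial order in $T$.

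Finally, restricting to the time interval $[T/4,3T/4]$ where $\theta$ is bounded from above by $16/T^2$ and $e^{-s\varphi}$ is bounded below by $\exp(-cs/T^2)\sim \exp(-c/(\varepsilon T^2))$, I would remove the weight on the left-hand side, picking up the exponential $e^{c/(\varepsilon T^2)}$ that is absorbed into the announced $e^{\frac{c}{\varepsilon}(1+T^{-1})}$. The polynomial prefactor $\varepsilon^{-2}T^{-5}(1+T)^3$ comes from tracking the largest weighted monomials $(s\theta)^k$ appearing in the absorbed terms, combined with the interior lower bound on $(s\theta)^3\sim s^3/T^6$ that dominates the left-hand side before unweighting. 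No new macros or environments are needed and the conclusion \eqref{ineq:Carleman} then follows.
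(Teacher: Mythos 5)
Your overall architecture is the right one and matches the paper's (which also follows Cornilleau--Guerrero): conjugate by a weight $e^{-s\alpha}$ with $\alpha\sim\theta(t)\psi(x)$, split the conjugated operator, expand the cross term, kill the time-boundary terms by the blow-up of $\alpha$ at $t=0,T$, use the Dirichlet condition at $x=L$, and isolate the contribution at $x=0$ as the observation term. However, there is a genuine gap at the step you describe as ``an integration by parts in $t$ converts these into a term of the form $Cs\theta(t)|u(t,0)|^2$''. Substituting the boundary relation $\partial_t u(t,0)=b\,\partial_x u(t,0)$ into the $|\partial_x\psi(t,0)|^2$ boundary terms leaves you with a term of the form $s\ep^2\int_0^{\tT}\phi\,e^{-2s\alpha}|\partial_t\t\sol|^2(t,0)\,\diff t$ on the bad side of the inequality; this is \emph{not} absorbable by the favorable boundary term $\f{\ep}{2}\int|\partial_t\psi(t,0)|^2$ produced by the cross term (the required smallness $s\ep\phi\lesssim 1$ fails, since the admissible $s$ forces $s\ep\phi\gtrsim 1$), and integrating it by parts in time does not give $|u(t,0)|^2$ but rather $\int \t\sol\,\partial^2_{tt}\t\sol(t,0)$, i.e.\ it introduces the \emph{second} time derivative of the trace. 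Controlling $\partial^2_{tt}\t\sol(t,0)$ is the hard core of the argument: the paper devotes Proposition~\ref{prop:10fromCornilleauGuerrero} and Lemma~\ref{lem:remark4} to it, introducing the auxiliary unknown $\zeta=\theta(t)\partial_t\t\sol$, deriving the advection--diffusion system it satisfies, and running two energy estimates (one for $\zeta$, one for $\partial_t\zeta$) plus Gronwall to dominate $\int|\partial^2_{tt}\t\sol|^2(t,0)$ by interior quantities already present on the left-hand side and by $\int|\t\sol|^2(t,0)$ with admissible weights. Without this stage your Carleman inequality would retain an uncontrolled $\int_0^T|\partial_t u(t,0)|^2$ term on the right and would not yield \eqref{ineq:Carleman} as stated.

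Two further bookkeeping points. First, the paper rescales time ($\tT=\f{b\ep}{2}T$) so that the equation has unit diffusion and drift $2/\ep$, and then takes $s\gtrsim \tT+\tT^2/\ep\sim \ep(T+T^2)$; with the weight $\alpha\sim 1/(\tT)^2\sim 1/(\ep^2T^2)$ this gives $s\alpha\sim \f{1}{\ep}(1+T^{-1})$, which is exactly the exponent in $C_C$. Your choice $s\sim 1/\ep$ with unrescaled $\theta\sim 1/T^2$ produces $e^{c/(\ep T^2)}$, which is \emph{not} dominated by $e^{\f{c}{\ep}(1+T^{-1})}$ for small $T$; the $T$-dependence of $s$ cannot be dropped. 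Second, the polynomial prefactor $\ep^{-2}T^{-5}(1+T)^3$ is obtained in the paper by explicitly maximizing the boundary weight $\phi^\beta e^{-2s\alpha}$ at $x=0$ (for $\beta=3,5,7$, the powers surviving after the $\partial^2_{tt}$-absorption) over the admissible range of $z=\phi(t,0)e^{-2L}$ and comparing with the interior lower bound of $s^3\phi^3e^{-2s\alpha}$ on $[T/4,3T/4]$; the power $7$ coming from the Young-inequality step in \eqref{eq:23CG:Y} is what your sketch cannot see without the $\partial^2_{tt}$ analysis.
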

To prove this estimate, we \inserted{follow the scheme of proof of the controllability inequality from~\cite{Cornilleau:2012dba}.} We first rescale  the system by defining
\begin{equation}
\t \TM:=\f{b \ep}{2}\TM,\quad \t\sol (t,x):=u(\f{2t}{b\ep},x),
\end{equation}
hence the system becomes 
\begin{equation}
\label{eq:model:redim}
\left\{
\begin{array}{lcr}
\f{\p}{\p t} \t\sol =\f{2}{\ep}\f{\p}{\p x} \t\sol +\f{\p^2}{\p x^2} \t\sol,&& (t,x)\in \t\Omega:=[0,\t\TM]\times[0,\LM],
\\ \\
\f{\p}{\p t} \t\sol (t,0) =\f{2}{\ep}\f{\p}{\p x} \t\sol (t,0),&
\t\sol(t,\LM)=0, & t \in [0,\t\TM],
\\ \\ \t\sol(0,x)=\uin (x) &&x\in[0,\LM].
\end{array}
\right.
\end{equation}
We now work with an intermediate function $\psi$ which is the  solution $\t\sol$ weighted  as follows:
\begin{equation}\label{def:psi}
\psi(t,x):=\t u (t,x) e^{-s \alpha(t,x)}, 
\end{equation}
with $s>0 $ to be specified later and $ \alpha\inserted{>0}$ defined by
\begin{equation}
\label{def:alpha}
 \alpha(t,x)\coloneqq\f{\lambda - e^{\eta(x)}}{t(\t\TM -t)},\qquad \eta(x)\coloneqq2\LM -x, \qquad \phi\coloneqq\f{e^{\eta(x)}}{t(\t\TM - t)},\qquad \lambda>e^{2\LM}.
\end{equation}

\begin{lemma}[Estimates for the weight functions derivatives] 
We have $\eta(x)\in [L,2L],$ and assuming $e^{2L}\inserted{<}\lambda\leq e^{2L}+e^{L},$ we have $\p_x \alpha=\phi$, $\p^2_{xx} \alpha =-\phi$ and the following inequalities
\begin{equation}\begin{array}{c}
\abs{\p_t\alpha}\leq \t\TM \phi^2,\qquad\abs{\p_{xt}^2\alpha}\leq\t\TM \phi^2,\qquad\abs{\p^2_{tt}\alpha}\leq 2\t\TM^2\phi^3,
\\ \\ \abs{\p_t\phi}\leq \t\TM \phi^2,\qquad \abs{\p_{tt}^2\phi}\leq 2\t\TM^2 \phi^3,\qquad \vert \p^3_{ttt}\alpha\vert, \;\vert \p^3_{ttt}\phi\vert  \leq 6 \t\TM^3 \phi^4.
\end{array}\label{ineq:alphax}
\end{equation}\label{lem:alphax}
\end{lemma}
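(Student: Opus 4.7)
The plan is to proceed by direct computation, separating the spatial and temporal dependencies. First I would verify the two identities for $x$-derivatives: since $\eta'(x)=-1$, one has $\partial_x e^{\eta(x)} = -e^{\eta(x)}$, so $\partial_x\alpha = e^{\eta(x)}/(t(\tT-t)) = \phi$ and then $\partial_{xx}^2\alpha = \partial_x\phi = -\phi$. The bound $\eta(x)\in [L,2L]$ is immediate from $\eta(x)=2L-x$ with $x\in[0,L]$.

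For the time derivatives I would factor out the $x$-dependence by writing $\alpha(t,x) = (\lambda - e^{\eta(x)})\mu(t)$ and $\phi(t,x) = e^{\eta(x)}\mu(t)$, where $\mu(t) = (t(\tT-t))^{-1} = f(t)^{-1}$ with $f(t)=t(\tT-t)$. A direct calculation gives
\[
\mu' = -\frac{f'}{f^2},\qquad \mu'' = \frac{2(f')^2}{f^3} - \frac{f''}{f^2},\qquad \mu''' = -\frac{6(f')^3}{f^4}+\frac{6 f' f''}{f^3},
\]
and, using $|f'(t)|=|\tT-2t|\le\tT$ together with $f''\equiv -2$, one reads off $|\mu^{(k)}(t)|\lesssim \tT^k \mu(t)^{k+1}$ for $k=1,2,3$. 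The key fact to absorb the lower-order terms (for instance the $f''/f^2 = -2\mu^2$ term in $\mu''$) into a single $\tT^k\mu^{k+1}$ bound is the elementary inequality $\mu(t)\ge 4/\tT^2$, which follows from $f(t)\le \tT^2/4$ and allows one to trade a factor $\mu^2$ for $\frac{\tT^2}{4}\mu^3$.

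Next I would combine these with sharp bounds on the spatial factor. The assumption $e^{2L}<\lambda\le e^{2L}+e^{L}$ and $\eta\ge L$ yield
\[
0 < \lambda - e^{\eta(x)} \le e^{2L} \le e^{2\eta(x)},
\]
so that for the estimates on $\partial_t^k\alpha=(\lambda-e^\eta)\mu^{(k)}$ the prefactor is dominated by $e^{2\eta}$, which combines with $\mu^{k+1}$ to produce $\phi^2\cdot\mu^{k-1}$ and, after the $\mu^j \mapsto \frac{\tT^2}{4}\mu^{j+1}$ trade, yields the announced $\tT^k\phi^{k+1}$ bounds. For the mixed and pure $\phi$-derivatives $\partial_{xt}^2\alpha=\partial_t\phi=e^{\eta}\mu'$, $\partial_{tt}^2\phi=e^{\eta}\mu''$ and $\partial_{ttt}^3\phi = e^{\eta}\mu'''$, the prefactor is $e^\eta\le e^{(k+1)\eta}$, which similarly matches the right-hand sides $\tT^k\phi^{k+1}$.

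There is no real obstacle here: the only subtlety is the bookkeeping that turns each intermediate expression of the form $e^{j\eta}\mu^{k+1}$ into a clean multiple of $\phi^{k+1}=e^{(k+1)\eta}\mu^{k+1}$, and that keeps the numerical constants (the $1$, $2$, and $6$ in \eqref{ineq:alphax}) aligned with the successive orders of differentiation. The calibration $\lambda\le e^{2L}+e^{L}$ in Lemma~\ref{lem:alphax} is precisely what makes $\lambda-e^\eta\le e^{2\eta}$ without an extra constant, so no tightening of the scheme is needed beyond the explicit formulas for $\mu^{(k)}$.
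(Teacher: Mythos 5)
Your proposal is correct and follows essentially the same route as the paper: explicit computation of the space and time derivatives, the calibration $\lambda\le e^{2L}+e^{L}$ giving $\lambda-e^{\eta}\le e^{2\eta}$, and the elementary bounds $|f'|\le\tT$, $f\le\tT^2/4$ for $f(t)=t(\tT-t)$. Two small caveats: bounding $(f')^2$ and $f$ separately in $\mu''$ and $\mu'''$ yields constants $5/2$ and $9$ rather than the stated $2$ and $6$ --- the paper instead bounds the combined numerators, $|3t^2-3t\tT+\tT^2|\le\tT^2$ and $|4t^3-6t^2\tT+4t\tT^2-\tT^3|\le\tT^3$ on $[0,\tT]$ --- and the upgrade of $e^{2\eta}\mu^{k+1}$ to $\phi^{k+1}$ rests on $e^{(k-1)\eta}\ge 1$ (i.e.\ $\eta\ge L>0$), not on the $\mu$-power trade, which only raises the power of $\mu$ and never produces the missing factors of $e^{\eta}$.
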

\begin{proof}

We compute $\eta'(x)=-1$ hence
$$\displaystyle{\begin{array}{lcr}\p_x \alpha=\phi,& \p^2_{xx}\alpha =-\phi, &\p_t \alpha=\f{(\lambda -e^{\eta(x)} )(-\t\TM +2t)}{t^2(\t\TM-t)^2},
\\ \\
\p_{tx}^2 \alpha=\p_t \phi=\f{ e^{\eta(x)}(-\t\TM+2t)}{t^2(\t\TM-t)^2}, &&
\p_{tt}^2 \alpha=\f{2(\lambda -e^{\eta(x)} )(3t^2 +\t\TM^2-3t\t\TM)}{t^3(\t\TM-t)^3},
\\ \\
\p^3_{ttt}\alpha=\f{6(\lambda-e^{\eta(x)})(4t^3-6t^2\t\TM+4t\t\TM^2-\t\TM^3)}{t^4(\t\TM-t)^4}.
\end{array}}
$$

Since $e^{\eta(x)}\leq e^{2\eta (x)}$ and $\abs{-\t\TM + 2t} \leq \t\TM$ we have $\abs{\p_{tx}^2 \alpha}=\abs{\p_t\phi} \leq \t\TM \phi^2.$ 

For $\lambda\leq e^{2L}+e^L$ we have $\lambda -e^{\eta(x)} \leq e^{2\eta (x)}$ hence $\abs{\p_t \alpha}\leq \t\TM \phi^2.$

We compute similarly $\abs{\p_{tt}^2 \alpha}\leq 2\t\TM^2\phi^3$ and $\abs{ \p^3_{ttt}\alpha} \leq 6 \t\TM^3 \phi^4.$ 

\end{proof}
Let us now state a first intermediate inequality for the rescaled system.
\begin{proposition}\label{prop:12fromCornilleauGuerrero}
There exists a constant $C>0$ such that, for every $\ep\in (0,1)$ and $s\geq C(\inserted{\ep}+\tT + \tT^2/\ep),$ we have the following inequality
\begin{multline}\label{ineq:14CornilleauGuerrero}
\iint_{\tO} \left\{s^{-1}\phi^{-1}e^{-2s\alpha}\left(\abs{\p_t\t\sol}^2
+\abs{\p^2_{xx}\t\sol}^2 \right) +  \left(s^3\phi^3 e^{-2s\alpha} \abs{\t\sol}^2 +s\phi e^{-2s\alpha} \abs{\p_x\t\sol}^2\right)\right\}\diff x\diff t 
\\
+ s\int_0^{\t \TM}\phi e^{-2s\alpha}\abs{\p_x\t\sol}^2(t,L)\diff t
\\ 
\lesssim \inserted{\ep^2} s^{\inserted{5}}\int_0^{\t\TM} \phi^5 e^{-2s\alpha}\abs{\t\sol}^2(t,0) \diff t 
+ s{\ep^2}\int_0^{\t\TM} \phi  e^{-2s\alpha}\abs{\p_t\t\sol}^2(t,0)\diff t.
\end{multline}
\end{proposition}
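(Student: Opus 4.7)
The plan is to follow the standard strategy for proving global Carleman estimates, adapted to the dynamic boundary condition at $x=0$. First I would perform the conjugation $\psi := \t\sol e^{-s\alpha}$ with the weight defined in~\eqref{def:alpha}, and rewrite the rescaled equation~\eqref{eq:model:redim} as
\[
    P_s \psi := e^{-s\alpha} \bigl( \partial_t - \tfrac{2}{\ep}\partial_x - \partial_{xx}^2 \bigr) \bigl( e^{s\alpha} \psi \bigr) = 0.
\]
Expanding the derivatives using Lemma~\ref{lem:alphax} (so that $\partial_x\alpha = \phi$, $\partial_{xx}^2\alpha = -\phi$) I would split $P_s\psi = P_1\psi + P_2\psi + R\psi$ where $P_1$ collects the formally self-adjoint terms (typically $-\partial_{xx}^2\psi - s^2\phi^2 \psi + s\phi\psi$), $P_2$ the formally skew-adjoint ones (typically $\partial_t\psi - 2s\phi\,\partial_x\psi - \frac{2}{\ep}\partial_x\psi$, up to lower order), and $R$ collects lower-order terms absorbable for $s$ large.

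The main identity is then
\[
    \|P_1\psi\|_{\Ltwo[\tO]}^2 + \|P_2\psi\|_{\Ltwo[\tO]}^2 + 2(P_1\psi,P_2\psi)_{\Ltwo[\tO]} = \|R\psi\|_{\Ltwo[\tO]}^2,
\]
and the heart of the proof is the expansion of the cross product $(P_1\psi,P_2\psi)$ through systematic integration by parts in $t$ and $x$. The favourable structure of $\alpha$ (with $\phi$ strictly positive and the specific sign of $\partial_{xx}^2\alpha$) ensures that the dominant volume contributions are the positive terms $s^3\phi^3|\psi|^2$ and $s\phi|\partial_x\psi|^2$, while the remaining distributed terms $s^{-1}\phi^{-1}|\partial_t\psi|^2$ and $s^{-1}\phi^{-1}|\partial_{xx}^2\psi|^2$ are recovered from $\|P_1\psi\|^2$ and $\|P_2\psi\|^2$ directly. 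The threshold $s \geq C(\ep+\tT+\tT^2/\ep)$ is precisely what is needed to absorb $\|R\psi\|^2$ as well as the cross terms created by the drift $\frac{2}{\ep}\partial_x$ and by $\partial_t\alpha$, $\partial_{tt}^2\alpha$, whose sizes are controlled by~\eqref{ineq:alphax}.

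The main obstacle, and the point where this estimate differs from~\cite{Cornilleau:2012dba}, will be the bookkeeping of the boundary integrals. Each integration by parts produces boundary contributions at $x=0$, $x=\LM$ and at $t=0$, $t=\tT$. The temporal traces vanish because $\phi^k e^{-2s\alpha} \to 0$ as $t\to 0^+$ or $t\to \tT^-$ for every $k$; the trace at $x=\LM$ is the favourable term $s\int_0^{\tT} \phi e^{-2s\alpha}|\partial_x\t\sol(t,\LM)|^2\,\diff t$, which appears on the left-hand side of~\eqref{ineq:14CornilleauGuerrero} since $\t\sol(t,\LM)=0$ and hence $\partial_x\psi(t,\LM) = e^{-s\alpha(t,\LM)}\partial_x\t\sol(t,\LM)$. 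The delicate point is the trace at $x=0$: the dynamic condition $\partial_t\t\sol(t,0) = \tfrac{2}{\ep}\partial_x\t\sol(t,0)$ must be used to eliminate $\partial_x\t\sol(t,0)$ in favour of $\partial_t\t\sol(t,0)$, and a further integration by parts in time on the boundary, combined with the bounds on $\partial_t\alpha$ and $\partial_{tt}^2\alpha$ from~\eqref{ineq:alphax}, produces precisely the two terms on the right-hand side of~\eqref{ineq:14CornilleauGuerrero}, the prefactors $\ep^2 s^5\phi^5$ and $\ep^2 s\phi$ reflecting the $\frac{1}{\ep}$ factor in the boundary relation.

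Once~\eqref{ineq:14CornilleauGuerrero} is established for $\psi$, returning to $\t\sol = e^{s\alpha}\psi$ is straightforward: derivatives of $\t\sol$ generate additional $s\phi$ factors that are compatible with the left-hand side. Combined with the fact that $\phi^{-1}\leq \tT^2/4$ and $\alpha\geq 0$, this completes the argument.
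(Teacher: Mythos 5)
Your proposal follows essentially the same route as the paper's proof: the conjugation $\psi=\t\sol e^{-s\alpha}$, the splitting of the conjugated operator into a transport-type part, a symmetric part and an absorbable remainder, the expansion of the cross product by integration by parts with the favourable trace at $x=L$ and vanishing temporal traces, and, crucially, the use of the dynamic boundary condition at $x=0$ to trade $\p_x\t\sol(t,0)$ for $\tfrac{\ep}{2}\p_t\t\sol(t,0)$ followed by a time integration by parts on the boundary, which is exactly how the paper produces the two right-hand-side terms with their $\ep^2$ prefactors. The only differences are cosmetic (your $P_1$/$P_2$ labels are swapped relative to the paper's, and the grouping of the lower-order terms differs slightly), so the argument is sound and matches the paper's.
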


\begin{proof} \inserted{We prove the result for smooth initial conditions; the final result follows by a density argument.}
We first compute the equation satisfied by $\psi:$
$$\p_t \psi + s(\p_t \alpha )\psi = \f{\inserted{2}}{\ep} \left(\p_x \psi + s(\p_x\alpha) \psi\right) + \p_{xx}^2 \psi + s(\p_{xx}^2 \alpha) \psi + 2 s(\p_x\alpha)\p_x \psi + s^2(\p_x\alpha)^2\psi,$$
that we write under the form
\begin{equation}\label{eq:psi}
P_1\psi + P_2\psi = P_3\psi
\end{equation}
with $P_1,$ $P_2$ and $P_3$ defined by
\begin{equation}\label{def:P123}
\begin{array}{lll}
P_1&:=&\p_t\psi -2s(\p_x\alpha)\p_x \psi-\f{\inserted{2}}{\eps} \p_x\psi,
\\ \\
P_2&:=&-\p_{xx}^2\psi-s^2(\p_x\alpha)^2\psi+s(\p_t\alpha)\psi-\f{\inserted{2}}{\eps}s(\p_x\alpha)\psi,
\\ \\
P_3&:=&s(\p^2_{xx}\alpha)\psi.
\end{array}
\end{equation}
The boundary conditions are
\begin{equation}\label{eq:psi:bound}
\left\{\begin{array}{ll}
\psi(t,L)=0, \\ \\
\p_t\psi(t,0) +s\left(\p_t \alpha(t,0)\right)\psi(t,0)=\f{2}{\eps}\biggl(\p_x\psi(t,0)+s(\p_x\alpha(t,0))\psi(t,0)\biggr).
\end{array}\right.
\end{equation}
We take the $\Ltwo[\tO]$ square norm of~\eqref{eq:psi} to find on $\tO\coloneqq[0,\t\TM]\times[0,L]:$
\begin{equation}
\label{eq:psiL2}
\norme{P_1\psi}_{\Ltwo[\tO]}^2 +\norme{P_2\psi}_{\Ltwo[\tO]}^2 +2\langle P_1\psi,P_2\psi\rangle_{\Ltwo[\tO]} = \norme{P_3\psi}_{\Ltwo[\tO]}^2.
\end{equation}

\

{\bf First step: estimate for $P_3$.}

By Lemma~\ref{lem:alphax} we have
\begin{equation}
\label{ineq:P3}
\norme{P_3\psi}_{\Ltwo[\tO]}^2= s^2 \iint_{\tO} \phi^2\abs{\psi}^2\diff x \diff t.
\end{equation}

{\bf Second step: development of the scalar product} $\langle P_1\psi,P_2\psi\rangle_{\Ltwo[\tO]}.$

We develop the scalar product and define
$$
\begin{array}{lcl}
\langle P_1\psi,P_2\psi\rangle_{\Ltwo[\tO]}&=&\langle
\p_t\psi -2s(\p_x\alpha)\p_x \psi-\f{2}{\eps} \p_x\psi, \\ \\
&&
-\p_{xx}^2\psi-s^2(\p_x\alpha)^2\psi+s(\p_t\alpha)\psi-\f{2}{\eps}s(\p_x\alpha)\psi\rangle
\\ \\
&:=&\sum\limits_{i=1}^3\sum\limits_{j=1}^{4} T_{ij}(\psi).
\end{array}$$
We compute, by using integration by parts and the property $\psi(0,x)=\psi(\t\TM,x)=0$ (and the same for the derivatives: $\p_x \psi(0,x)=\p_x\psi(\t\TM,x)=0$ etc.):
$$\begin{array}{ll}
 T_{11}
&:=
\langle \p_t\psi,-\p^2_{xx} \psi\rangle_{\Ltwo[\tO]}
 \\ \\ 
 &=
\f{1}{2} \dst \int_0^{\tT} \f{d}{\diff t} \norme{\p_x \psi (t,\cdot)}^2  \diff t+ \int_0^{\tT} \left( \p_t \psi \p_x \psi (t,0) -\p_t\psi \p_x \psi(t,L) \right)\diff t
\\ \\
&=  \dst \int_0^{\tT}  \p_t \psi (t,0)\left(-s(\p_x\alpha)\psi(t,0) +\f{\ep}{2}\left(\p_t\psi (t,0)+s(\p_t\alpha)\psi(t,0)\right)\right) \diff t
\\ \\
&=
+\f{s}{2} \dst \int_0^{\tT} (\p_{tx}^2\alpha -\f{\ep}{2}\p^2_{tt}\alpha) \abs{\psi(t,0)}^2\diff t
+\f{\ep}{2} \int_0^{\tT}  \abs{\p_t \psi (t,0)}^2 \diff t 
\\ \\
&\gtrsim - \inserted{\f{s}{2}} \dst \int_0^{\tT} (\tT \phi^2 +{\ep} \tT^2\phi^3) \abs{\psi(t,0)}^2\diff t {\color{black}+\f{\ep}{2} \int_0^{\tT}  \abs{\p_t \psi (t,0)}^2 \diff t}. 
\end{array}$$

The next three terms are treated in a similar way:
$$\begin{array}{ll}T_{12}
&:=\langle \p_t\psi,-s^2(\p_x\alpha)^2\psi\rangle_{\Ltwo[\tO]} 
\\ \\
&=s^2\dst\iint_{\tO} (\p_x\alpha) (\p^2_{tx}\alpha) \abs{\psi}^2 \diff x\diff t -s^2\int_0^L \left[(\p_x\alpha)^2 \abs{\psi}^2(\cdot,x)\right]_{t=0}^{t=\tT}\diff x 
\\ \\
&=s^2\dst\iint_{\tO} (\p_x\alpha) (\p^2_{tx}\alpha) \abs{\psi}^2 \diff x\diff t  \gtrsim {\color{myred}-s^2 \iint_{\tO} \t\TM \phi^3 \abs{\psi}^2 \diff x\diff t},
\end{array}
$$

$$ T_{13}:=\langle \p_t\psi,s(\p_t\alpha)\psi\rangle_{\Ltwo[\tO]}=-\f{s}{2}\dst\iint_{\tO} \p^2_{tt}\alpha \abs{\psi}^2\diff x\diff t\gtrsim {\color{myred}- s \iint_{\tO} \t\TM^2\phi^3 \abs{\psi}^2\diff x\diff t},$$
 and
 $$T_{14}:=\langle \p_t\psi,-\f{2}{\eps}s(\p_x\alpha)\psi\rangle_{\Ltwo[\tO]}=\f{\inserted{s}}{\eps} \dst\iint_{\tO} (\p^2_{xt}\alpha)\abs{\psi}^2 \diff x\diff t\gtrsim {\color{myred}-\f{\inserted{s}}{\ep} \iint_{\tO} \t\TM \phi^2 \abs{\psi}^2 \diff x\diff t}.$$
 For all the remaining terms, we integrate by parts in $x$ as for $T_{11}:$
 $$\begin{array}{ll}T_{21}&:=\langle -2s(\p_x\alpha)\p_x \psi, 
 -\p_{xx}^2\psi\rangle_{\Ltwo[\tO]}
 \\ \\
 &=
- s\dst\iint_{\tO} \p^2_{xx}\alpha \abs{\p_x\psi}^2 \diff x\diff t
 +s\int_0^{\t T}  \left((\p_x\alpha) \abs{\p_x\psi}^2(t,L) - (\p_x\alpha) \abs{\p_x\psi}^2(t,0)\right)\diff t
 \\ \\
 &=
  {\color{mygreen}s\dst\iint_{\tO} \phi \abs{\p_x\psi}^2 \diff x\diff t
 +s\int_0^{\t T}  \phi(t,L) \abs{\p_x\psi}^2(t,L)\diff t} {\color{myorange}- s\dst\int_0^{\t\TM}\phi (t,0)\abs{\p_x\psi}^2(t,0)\diff t},
 \end{array}$$

 $$\begin{array}{ll}T_{22}&:=\langle -2s(\p_x\alpha)\p_x \psi,
 -s^2(\p_x\alpha)^2\psi \rangle_{\Ltwo[\tO]}
 \\ \\
 &=-3s^3 \dst\iint_{\tO}(\p_{xx}^2\alpha)(\p_x\alpha)^2 \abs{\psi}^2\diff x\diff t + s^3\int_0^{\t T}\left((\p_x\alpha)^3 \abs{\psi}^2(t,L)-(\p_x\alpha)^3 \abs{\psi}^2(t,0)\right)\diff t
 \\ \\
 &= {\color{myblue}3s^3\dst \iint_{\tO}\phi^3 \abs{\psi}^2\diff x\diff t }- s^3\dst\int_0^{\t \TM}\phi^3 \abs{\psi}^2(t,0)\diff t,
 \end{array}
 $$

  $$\begin{array}{ll}T_{23}&:=\langle -2s(\p_x\alpha)\p_x \psi, 
  +s(\p_t\alpha)\psi \rangle_{\Ltwo[\tO]}
  \\ \\
  &=s^2\dst\iint_{\tO} \left((\p^2_{xx}\alpha)(\p_t\alpha) + (\p_x\alpha)(\p^2_{tx}\alpha)\right)\abs{\psi}^2 \diff x\diff t + s^2 \int_0^{\t\TM} (\p_x\alpha)(\p_t\alpha)\abs{\psi}^2(t,0)\diff t
  \\ \\
  &\gtrsim -{\color{myred} \inserted{2} s^2\tT\dst\iint_{\tO} \phi^3 \abs{\psi}^2 \diff x\diff t} - s^2 \tT\dst \int_0^{\t\TM}  \phi^3 \abs{\psi}^2(t,0)\diff t,
  \end{array}$$
  
   $$\begin{array}{ll}T_{24}&:=\langle -2s(\p_x\alpha)\p_x \psi, -\f{2}{\eps}s(\p_x\alpha)\psi\rangle_{\Ltwo[\tO]}
   \\ \\
   &=-4s^2\f{1}{\ep} \dst\iint_{\tO} (\p_{xx}^2\alpha)(\p_x\alpha)\abs{\psi}^2 \diff x\diff t
   - 2s^2 \f{1}{\ep}\int_0^{\t\TM} (\p_x\alpha)^2\abs{\psi}^2(t,0)\diff t
   \\ \\
   &={\color{myblue}4s^2\f{1}{\ep}\dst \iint_{\tO} \phi^2 \abs{\psi}^2 \diff x\diff t}
   - 2s^2\f{1}{\ep}\dst\int_0^{\t\TM} \phi^2\abs{\psi}^2(t,0)\diff t
   \end{array}$$ 
  
   $$T_{31}=\langle -\f{2}{\eps} \p_x\psi, 
 -\p_{xx}^2\psi\rangle_{L^2}
 ={\color{mygreen}\f{1}{\ep} \dst\int_0^{\t\TM} \abs{\p_x\psi}^2(t,L)\diff t}{\color{myorange} - \f{1}{\ep} \int_0^{\t\TM} \abs{\p_x\psi}^2(t,0)\diff t}
 $$
 
   $$
   \begin{array}{ll}
   T_{32}&:=\langle -\f{2}{\eps} \p_x\psi, 
 -s^2(\p_x\alpha)^2\psi\rangle_{\Ltwo[\tO]}
 \\ \\
 &=-\f{2 s^2 }{\ep} \dst\iint_{\tO} (\p_x\alpha)(\p_{xx}^2\alpha)\abs{\psi}^2\diff x\diff t + \f{s^2 }{\ep}\int_0^{\t\TM} \left((\p_x\alpha)^2 \abs{\psi}^2(t,L)-(\p_x\alpha)^2 \abs{\psi}^2(t,0)\right)\diff t
 \\ \\
 &={\color{myblue}\f{2s^2 }{\ep} \dst\iint_{\tO} \phi^2\abs{\psi}^2\diff x\diff t }-  \f{s^2 }{\ep}\dst\int_0^{\t\TM} \phi^2 \abs{\psi}^2(t,0)\diff t
 \end{array}$$

   $$\begin{array}{ll}
   T_{33}&:=\langle -\f{2}{\eps} \p_x\psi, 
 +s(\p_t\alpha)\psi\rangle_{L^2}
 \\ \\
 &=\f{s}{\ep}\dst\iint_{\tO} (\p^2_{tx} \alpha)\abs{\psi}^2 \diff x\diff t + \f{s}{\ep} \int_0^{\t\TM} (\p_t\alpha)\abs{\psi}^2(t,0)\diff t
 \\ \\
 &\gtrsim - {\color{myred}\f{s}{\ep} \dst\iint_{\tO} \tT\phi^2 \abs{\psi}^2 \diff x\diff t} -\f{s}{\ep} \dst\int_0^{\t\TM} \tT \phi^2 \abs{\psi}^2(t,0)\diff t,
 \end{array}
 $$
   and finally
   $$\begin{array}{ll}
   T_{34}&:=\langle -\f{2}{\eps} \p_x\psi, 
 -\f{2}{\eps}s(\p_x\alpha)\psi\rangle_{\Ltwo[\tO]}
 \\ \\
 &=-\f{2s}{\ep^2} \dst\iint_{\tO}(\p_{xx}^2\alpha)\abs{\psi}^2 \diff x\diff t  -\f{2s}{\ep^2}\int_0^{\t\TM} (\p_x\alpha) \abs{\psi}^2(t,0)\diff t 
 \\ \\
 &={\color{myblue}\f{2s}{\ep^2} \dst\iint_{\tO}\phi\abs{\psi}^2 \diff x\diff t } -\f{2s}{\ep^2}\dst\int_0^{\t\TM} \phi \abs{\psi}^2(t,0)\diff t .
 \end{array}
 $$
 {\bf Third step: Rearrangement of~\eqref{eq:psiL2}.}
 
From the estimates of $T_{ij},$  we write~\eqref{eq:psiL2} as an inequality between nonnegative terms, namely
\begin{equation}\label{ineq:PJL} \norme{P_1\psi}_{L^2(\tO)}^2 +\norme{P_2\psi}_{L^2(\tO)}^2 +{\color{myblue}I_1(\psi)}+{\color{mygreen}I_2(\psi_x)}
+{\color{black}I_3(\psi)}\lesssim {\color{myred}J_1(\psi)}+{\color{myorange}J_2(\psi_x(\cdot,0)}+L(\psi(\cdot,0)),
\end{equation}
where we define
 \begin{equation}
 \label{def:IJL}
 \begin{array}{ll}
 I_1(\psi)&:={\color{myblue}3s^3 \dst \iint_{\tO}\phi^3 \abs{\psi}^2\diff x\diff t }{\color{myblue}+\f{6s^2 }{\ep} \dst\iint_{\tO} \phi^2 \abs{\psi}^2 \diff x\diff t}
 {\color{myblue}+\f{2s}{\ep^2} \dst\iint_{\tO}\phi\abs{\psi}^2 \diff x\diff t },
 \\ \\
 I_2(\psi_x)&:={\color{mygreen}s\dst\iint_{\tO} \phi \abs{\p_x\psi}^2 \diff x\diff t
 +s\int_0^{\t\TM}  \phi(t,L) \abs{\p_x\psi}^2(t,L)\diff t}
 {\color{mygreen}+\f{1}{\ep} \dst\int_0^{\t\TM} \abs{\p_x\psi}^2(t,L)\diff t} 
,
\\ \\
I_3(\p_t\psi(\cdot,0))&:={\color{black}\f{\ep}{2} \dst\int_0^{\tT}  \abs{\p_t \psi (t,0)}^2 \diff t },
 \\ \\
 J_1(\psi)&:={\color{myred}s^2 \dst\iint_{\tO} \t\TM \phi^3 \abs{\psi}^2 \diff x\diff t}{\color{myred}+ s \dst\iint_{\tO} \t\TM^2\phi^3 \abs{\psi}^2\diff x\diff t}{\color{myred}+\f{\inserted{s}}{\ep} \dst\iint_{\tO} \t\TM \phi^2 \abs{\psi}^2 \diff x\diff t}
 \\ \\
 &{\color{myred}+ \inserted{2} s^2\tT\dst\iint_{\tO} \phi^3 \abs{\psi}^2 \diff x\diff t} {\color{myred}+\f{\inserted{s}}{\ep} \dst\iint_{\tO} \tT\phi^2 \abs{\psi}^2 \diff x\diff t} ,
 \\ \\
 J_2(\psi)&:= {\color{myorange}s\dst\int_0^{\t\TM}\phi (t,0)\abs{\p_x\psi}^2(t,0)\diff t}{\color{myorange}+\f{1}{\ep} \dst\int_0^{\t\TM}  \abs{\p_x\psi}^2(t,0)\diff t},\\ \\
 L(\psi)&:= \inserted{\f{s}{2}} \dst\int_0^{\tT} (\tT \phi^2 +{\ep}\tT^2\phi^3) \abs{\psi(t,0)}^2\diff t  
+ s^3\int_0^{\t\TM}\phi^3 \abs{\psi}^2(t,0)\diff t
\\ \\
&+ s^2 \tT\dst \int_0^{\t\TM}  \phi^3 \abs{\psi}^2(t,0)\diff t
+ s^2\f{2}{\ep}\int_0^{\t\TM} \phi^2\abs{\psi}^2(t,0)\diff t
+  \f{s^2 }{\ep}\int_0^{\t\TM} \phi^2 \abs{\psi}^2(t,0)\diff t
\\ \\
&+\f{\inserted{s}}{\ep}\dst \int_0^{\t\TM} \tT \phi^2 \abs{\psi}^2(t,0)\diff t
 +\f{2s}{\ep^2}\int_0^{\t\TM} \phi \abs{\psi}^2(t,0)\diff t.
 \end{array}
 \end{equation}
The control term is represented by $L$. 

We first gather the terms and simplify them as much as possible:
 $$ 
 \begin{array}{ll}
 I_1(\psi)&={\color{myblue} \dst\iint_{\tO} \abs{\psi}^2 \left(3s^3\phi^3+\f{6s^2}{\ep}  \phi^2 
 +\f{2s}{\ep^2} \phi\right) \diff x\diff t },
 \\ \\
 J_1(\psi)&={\color{myred}\dst\iint_{\tO} \abs{\psi}^2 \left(\inserted{3} s^2 \t\TM \phi^3 + s  \t\TM^2\phi^3 +\f{\inserted{2}s}{\ep} \t\TM \phi^2   \right) \diff x\diff t} ,
 \\ \\
 J_2(\psi)&= {\color{myorange}\dst\int_0^{\t\TM} \abs{\p_x\psi}^2(t,0)\left(s\phi (t,0)+\f{1}{\ep}\right) \diff t},\\ \\
 L(\psi)&=  \int_0^{\tT}  \abs{\psi(t,0)}^2
 \left(\inserted{\f{s}{2}}\tT \phi^2 +s\f{\ep}{2} \tT^2\phi^3
+ s^3\phi^3 + s^2 \tT  \phi^3 
+ s^2\f{\inserted{3}}{\ep}\phi^2
+\f{\inserted{s}}{\ep}  \tT \phi^2 
 +\f{2s}{\ep^2} \phi \right)\diff t.
 \end{array}
 $$
 We can absorb the distributed term $J_1(\psi)$ by the term $I_1(\psi):$ we notice that $\phi\geq \f{4}{\t\TM^2}$ so that $\phi^3\t\TM^2\gtrsim \phi^2,$ \inserted{and we use repeatedly that $\phi^{-1}\lesssim \t T^2.$} It is thus sufficient, to absorb respectively each of the three terms of $J_1,$ that
 $$s\gtrsim \t\TM.$$
 We simplify the control term expression by writing, for $\ep<1,$
  $$\begin{array}{ll}L(\psi)&\lesssim \int_0^{\t\TM} \abs{\psi}^2\phi^3(t,0) \left(\inserted{\f{s}{2}}\tT^3 + {s}\tT^2 + s^3 + s^2\tT + s^2\tT^2\f{\inserted{3}}{\ep} + s\tT^3\f{\inserted{1}}{\ep} + s\tT^4 \f{\inserted{2}}{\ep^2}\right)\diff t
 \\ \\
 &\lesssim s^3 \int_0^{\tT} \abs{\psi}^2 \phi^3(t,0)\diff t
 \end{array}$$
 as soon as we have $$s\gtrsim \tT+\f{\tT^2}{\ep}.$$ 
 Hence the inequality~\eqref{ineq:PJL} is simplified into
\begin{equation}
\label{ineq:PJL2} \norme{P_1\psi}_{L^2(\tO)}^2 +\norme{P_2\psi}_{L^2(\tO)}^2 +{\color{myblue}I_1(\psi)}+{\color{mygreen}I_2(\psi_x)}
+{\color{black}I_3(\psi)}\lesssim {\color{myorange}J_2(\psi_x(\cdot,0)}+s^3\int_0^{\tT} \abs{\psi}^2 \phi^3(t,0)\diff t.
\end{equation}

\

{\bf Fourth step: absorption of the term $J_2$.}

The boundary condition~\eqref{eq:psi:bound} at $x=0$ gives
$$\p_x\psi(t,0)=\f{\ep}{2}\biggl\{\p_t\psi(t,0) +s\left(\p_t \alpha(t,0)\right)\psi(t,0)\large\biggr\}-s(\p_x\alpha(t,0))\psi(t,0)$$
hence, recalling that $\partial_x \alpha=\phi,$ 
$$\begin{array}{ll}\abs{\p_x\psi(t,0)}^2&=\biggl(\f{\ep}{2}\p_t\psi(t,0) +s\bigl(\f{\ep}{2}\p_t \alpha(t,0)-\phi(t,0)\bigr)\psi(t,0)\biggr)^2
\\ \\&=\f{\ep^2}{4}\abs{\p_t\psi}^2 +s\left(\f{\ep}{2}\p_t \alpha-\phi\right) \p_t\left(\inserted{\f{\ep}{2}}\abs{\psi}^2\right)+s^2 \left(\f{\ep}{2}\p_t \alpha-\phi\right)^2\abs{\psi}^2
\end{array}$$
We thus compute $J_2$ as
\begin{equation*}\begin{array}{ll}
 J_2(\psi)&= {\color{myorange}\dst\int_0^{\t\TM} \abs{\p_x\psi}^2(t,0)\left(s\phi (t,0)+\f{1}{\ep}\right) \diff t}\\ \\
&=\dst\int_0^{\t\TM} \left(s\phi +\f{1}{\ep}\right)
\biggl\{ 
\f{\ep^2}{4}\abs{\p_t\psi}^2 +s\left(\f{\ep}{2}\p_t \alpha-\phi\right) \p_t\left(\inserted{\f{\ep}{2}}\abs{\psi}^2\right) 
\\ \\  &\qquad \qquad\qquad\qquad +s^2 \left(\f{\ep}{2}\p_t \alpha-\phi\right)^2\abs{\psi}^2
\biggr\} (t,0)\diff t
\\ \\
&=\dst\int_0^{\t\TM} \left((s\phi\f{\ep^2}{4} +\f{\ep}{4})\abs{\p_t\psi}^2 \right.\\ \\
&\left.
+ \left\{-\p_t\bigl\{\left(\inserted{\f{\ep}{2}}s^2\phi +\f{s}{\inserted{2}}\right) 
 \left(\f{\ep}{2}\p_t \alpha-\phi\right)\bigr\} +s^2 \inserted{ \left(s\phi +\f{1}{\ep}\right) }\left(\f{\ep}{2}\p_t \alpha-\phi\right)^2\right\}\abs{\psi}^2
\right) (t,0)\diff t.
\end{array}\end{equation*}
\inserted{We now use Lemma~\ref{lem:alphax} to obtain}
\begin{equation}\label{ineq:J2}
\begin{array}{ll}
 J_2(\psi) &\lesssim \dst\int_0^{\t\TM}  (s\phi \ep^2 +{\ep})\abs{\p_t\psi}^2(t,0) \diff t 
\\ \\ &+\inserted{\dst\int_0^{\t\TM}} \left(s^2\tT\inserted{^2}\phi^4 {\ep}\inserted{^2} +\inserted{\ep s^2\phi^3 \t T +\ep s \t T^2 \phi^3+s\t T\phi^2}\right)\abs{\psi}^2(t,0)\diff t
\\ \\ & + \dst\int_0^{\t\TM} (\inserted{s^3 \ep^2 \t T^2 \phi^5 + \ep s^3 \phi^4 \t T + s^3\phi^3
+\ep s^2\t T^2\phi^4 
+ s^2 \phi^3 \t T 
+ \f{s^2}{\ep} \phi^2
})  \abs{\psi}^2(t,0)\diff t
\end{array}\end{equation}
We keep the first term for later, and to concatenate the second \inserted{and third} term with $L$ we choose a higher power for the weight function, namely $\inserted{ \ep ^2 s^5\phi^5}$ instead of $s^3\phi^3.$ \inserted{For this we use repeatedly $\phi^{-1}\lesssim \t T^2 \lesssim \ep s,$ and assume $\ep \lesssim s$, in order to check that each term in the second and third lines of~\eqref{ineq:J2} is dominated by $\ep^2 s^5\phi^5.$ We thus obtain:
\[L+J_2 \lesssim \dst\int_0^{\t\TM}  (s\phi \ep^2 +{\ep})\abs{\p_t\psi}^2(t,0) \diff t +\ep^2 s^5 \int_0^{\t T} \abs{\psi}^2 \phi^5(t,0)\diff t .
\]
}

\

{\bf Fifth step: Estimates on $\Vert \p_x \psi\Vert^2$ and $\Vert \p^2_{xx} \psi\Vert^2$.}

$$P_1=\p_t\psi -2s\phi \p_x \psi-\f{2}{\eps} \p_x\psi,
$$
hence 
$$s^{-1}\phi^{-1}\abs{\p_t\psi}^2 \lesssim s\phi \abs{\p_x\psi}^2 + s^{-1} \phi^{-1}\ep^{-2} \abs{\p_x\psi}^2 + s^{-1}\phi^{-1} \abs{P_1}^2,$$
hence \inserted{since $\ep <1$ and $s^{-1}\phi^{-1}\lesssim \ep,$} we have
$$\dst\iint_{\tO} s^{-1}\phi^{-1}\abs{\p_t\psi}^2 \diff x\diff t \lesssim {\color{mygreen}s}\iint_{\tO} \phi \abs{\p_x\psi}^2 \diff x\diff t+ \iint_{\tO} \abs{P_1}^2\diff x\diff t. 
$$
Both terms of the right-hand side appear in the left-hand side of~\eqref{ineq:PJL2} (the first one through $I_2$).

Recalling now the definition~\eqref{def:P123} of $P_2:$
$$P_2=-\p_{xx}^2\psi-s^2\phi^2\psi+s(\p_t\alpha)\psi-\f{2}{\eps}s\phi\psi,$$
we have, since $\abs{\p_t\alpha}\lesssim \tT\phi^2$,
$$\begin{array}{ll}s^{-1}\phi^{-1}\abs{\p^2_{xx}\psi}^2  &\lesssim
 s^3\phi^3\abs{\psi}^2 
+ s \tT^2 \phi^3 {\color{mygreen}\abs{\psi}^2} + \f{\inserted{1}}{\ep^2} {\color{mygreen}s} \phi \abs{\psi}^2+s^{-1}\phi^{-1} \abs{P_2}^2 
\\ \\ &\lesssim  s^3\phi^3\abs{\psi}^2 + \abs{P_2}^2.
\end{array}
$$
The first term on the right-hand side appears with the same order of magnitude in $I_1.$ 
Combined with~\eqref{ineq:PJL} and with~\eqref{ineq:J2} we obtain, for $s\gtrsim \tT+\f{\tT^2}{\ep}:$

\begin{equation}\label{ineq:21CG}
\begin{array}{ll}
\dst\iint_{\tO} \left\{s^{-1}\phi^{-1}\left(\abs{\p_t\psi}^2
+\abs{\p^2_{xx}\psi}^2 \right) +  \left(s^3\phi^3\abs{\psi}^2 +s\phi \abs{\p_x\psi}^2\right)\right\}\diff x\diff t + s\int_0^{\t\TM}\phi\abs{\p_x\psi}^2(t,L)\diff t
\\ \\
\lesssim \dst\int_0^{\t\TM} \inserted{\ep^2 s^5\phi^5} \abs{\psi}^2(t,0) \diff t 
+ s\int_0^{\t\TM} \phi {\ep^2} \abs{\p_t\psi}^2(t,0)\diff t.
\end{array}
\end{equation}
\

{\bf Sixth step: back to the dimensionless solution $\tilde \sol$.}

We recall that by~\eqref{def:psi} we have $\psi=\t\sol e^{-s\alpha},$ so
$\p_x\psi = e^{-s\alpha} (\p_x \t\sol -s\phi\t \sol)$, so that
$$\dst\iint_{\tO}s\phi e^{-2s\alpha} \abs{\p_x\t\sol}^2\diff x\diff t \lesssim \dst\iint_{\tO} s\phi \abs{\p_x\psi}^2 \diff x\diff t + \iint_{\tO} s^3\phi^3 \abs{\psi}^2\diff x\diff t$$
and for the boundary $x=L$ since $\t\sol (t,L)=\psi(t,L)=0$ we have
$$\int_0^{\tT} s\phi e^{-2s\alpha} \abs{\p_x\t\sol}^2(t,L) \diff t \lesssim \int_0^{\tT} s\phi \abs{\p_x\psi}^2 (t,L)\diff t.$$
We also compute $\p_t \psi=e^{-s\alpha}\left(\p_t \t\sol - s(\p_t\alpha )\t\sol\right)$ so that, for $s\geq \tT,$ using Lemma~\ref{lem:alphax}
$$\begin{array}{ll}\dst\iint_{\tO}s^{-1}\phi^{-1} e^{-2s\alpha} \abs{\p_t\t\sol}^2\diff x\diff t
& \lesssim
\dst\iint_{\tO}s^{-1}\phi^{-1} \abs{\p_t\psi}^2 \diff x\diff t
+ \iint_{\tO}s \phi^3 \tT^2 \abs{\psi}^2 \diff x\diff t
\\ \\
&\lesssim \dst\iint_{\tO}s^{-1}\phi^{-1} \abs{\p_t\psi}^2 \diff x\diff t
+ \iint_{\tO}s^3 \phi^3 \abs{\psi}^2 \diff x\diff t,
\end{array} $$
and for the boundary $x=0$ we have for $s\geq \t T^2:$ 
$$ \begin{array}{ll}\dst\int_0^{\tT} s\phi  \abs{\p_t\psi}^2 (t,0)\diff t
& \lesssim
\dst\int_0^{\tT} s \phi e^{-2s\alpha}\abs{\p_t\t\sol}^2 (t,0)\diff t
+ \int_0^{\tT} s^3 \phi^5 \tT^2 e^{-2s\alpha}\abs{\t\sol}^2 (t,0)\diff t
\\ \\
&\lesssim \dst\int_0^{\tT} s \phi  e^{-2s\alpha}\abs{\p_t\t\sol}^2 (t,0)\diff t
+ \int_0^{\tT} s^{\inserted{5}} \phi^5 e^{-2s\alpha}\abs{\inserted{\t\sol}}^2 (t,0)\diff t  ,
\end{array}
$$
Similarly we have 
$\p^2_{xx} \psi=e^{-s\alpha} (\p^2_{xx} \t\sol + s\phi\t\sol - 2 s\phi\p_x\t \sol  +s^2\phi^2\t\sol),$ so that for ${\color{mygreen}s\geq \tT}$
$$\dst\iint_{\tO}s^{-1}\phi^{-1} e^{-2s\alpha} \abs{\p_{xx}^2 \t\sol}^2\diff x\diff t 
\lesssim
\dst\iint_{\tO}\left(s^{-1}\phi^{-1} \abs{\p_{xx}^2 \psi}^2\diff x\diff t
+  s^3 \phi^3\abs{\psi}^2 + {\color{mygreen}s} \phi \abs{\p_x\psi}^2 \right)\diff x\diff t
$$
We combine all these inequalities with~\eqref{ineq:21CG} and obtain
\begin{multline*}
\dst\iint_{\tO} \left\{s^{-1}\phi^{-1}e^{-2s\alpha}\left(\abs{\p_t\t\sol}^2
+\abs{\p^2_{xx}\t\sol}^2 \right) \inserted{+} \left(s^3\phi^3 e^{-2s\alpha} \abs{\t\sol}^2 +s\phi e^{-2s\alpha} \abs{\p_x\t\sol}^2\right)\right\}\diff x\diff t 
\\ 
+ s\dst\int_0^{\t\TM}\phi e^{-2s\alpha}\abs{\p_x\t\sol}^2(t,L)\diff t 
\lesssim 
 s{\ep^2}\int_0^{\t\TM} \phi  e^{-2s\alpha}\abs{\p_t\t\sol}^2(t,0)\diff t
\\ +{\ep^2}\dst\int_0^{\tT} s^{\inserted{5}} \phi^5  e^{-2s\alpha} \abs{\t\sol}^2(t,0)\diff t.
\end{multline*}
This is~\eqref{ineq:14CornilleauGuerrero}, and ends the proof of Proposition~\ref{prop:12fromCornilleauGuerrero}.

\end{proof}

We now go a step further by estimating the term 
$$s\ep^2\int_0^{\t\TM} \phi e^{-2s\alpha} \vert \p_t\t\sol\vert^2(t,0) \diff t$$
on the right-hand side of~\eqref{ineq:14CornilleauGuerrero}. This leads to the following proposition.

\begin{proposition}\label{prop:10fromCornilleauGuerrero}
There exists a constant $C>0$ such that, for every $\ep\in (0,1)$ and $s\geq C( \inserted{\ep +}\tT+ \tT^2/\ep),$ \inserted{with $\tT \lesssim 1,$} we have the following inequality

\begin{multline}\label{ineq:13CornilleauGuerrero}
\dst\iint_{\tO} \left\{s^{-1}\phi^{-1}e^{-2s\alpha}\left(\abs{\p_t\t\sol}^2
+\abs{\p^2_{xx}\t\sol}^2 \right) +  \left(s^3\phi^3 e^{-2s\alpha} \abs{\t\sol}^2 +s\phi e^{-2s\alpha} \abs{\p_x\t\sol}^2\right)\right\}\diff x\diff t 
\\ 
\qquad \qquad + s\int_0^{\t\TM}\phi e^{-2s\alpha}\abs{\p_x\t\sol}^2(t,L)\diff t
\lesssim
\int_0^{\t\TM} \left( \inserted{\ep^2} s^{\inserted{5}} \phi^5 + \inserted{\ep^2 s^6 \phi^7+}\ep^3s^{\inserted{3}}  \phi^\inserted{3} 
\right)e^{-2s\alpha}\abs{\t\sol}^2(t,0) \diff t .
\end{multline}
\end{proposition}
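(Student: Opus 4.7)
The plan is to eliminate the term $s\ep^2\int_0^{\tT} \phi e^{-2s\alpha} \vert \p_t\t\sol\vert^2(t,0)\diff t$ from the right-hand side of~\eqref{ineq:14CornilleauGuerrero}, replacing it by a boundary expression in $\vert \t\sol(t,0)\vert^2$ carrying the claimed weights $\ep^2 s^5\phi^5$, $\ep^2 s^6\phi^7$ and $\ep^3 s^3\phi^3$.

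\textbf{Step 1: Use the boundary condition.} By the Robin-type condition at $x=0$ in~\eqref{eq:model:redim}, $\p_t \t\sol(t,0) = \f{2}{\ep}\p_x \t\sol(t,0)$, so
\[
s\ep^2\int_0^{\tT}\phi e^{-2s\alpha}\abs{\p_t\t\sol}^2(t,0)\diff t = 4s\int_0^{\tT}\phi e^{-2s\alpha}\abs{\p_x\t\sol}^2(t,0)\diff t.
\]
It thus remains to estimate the weighted Neumann trace at $x=0$.

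\textbf{Step 2: A trace identity.} Fix a cutoff $\chi\in C^\infty([0,L])$ with $\chi(0)=1$, $\chi(L)=0$. Integrating $\p_x\bigl(\chi(x)\phi(t,x)e^{-2s\alpha(t,x)}\abs{\p_x\t\sol}^2\bigr)$ over $x\in[0,L]$ and using $\t\sol(t,L)=0$ gives
\[
\phi(t,0)e^{-2s\alpha(t,0)}\abs{\p_x\t\sol(t,0)}^2 = -\int_0^L \p_x\bigl(\chi\phi e^{-2s\alpha}\bigr)\abs{\p_x\t\sol}^2\diff x - 2\int_0^L \chi\phi e^{-2s\alpha}\p_x\t\sol\,\p^2_{xx}\t\sol\diff x.
\]
Thanks to Lemma~\ref{lem:alphax} (namely $\p_x\phi=-\phi$ and $\p_x\alpha=\phi$), the first integrand is of the form $(\chi'\phi-\chi\phi-2s\chi\phi^2)e^{-2s\alpha}\abs{\p_x\t\sol}^2$. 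Multiplying by $4s$ and integrating in $t$ produces distributed terms of order $s\phi e^{-2s\alpha}\abs{\p_x\t\sol}^2$ and $s^2\phi^2 e^{-2s\alpha}\abs{\p_x\t\sol}^2$, which are both dominated by the $s\phi e^{-2s\alpha}\abs{\p_x\t\sol}^2$ and $s^3\phi^3 e^{-2s\alpha}\abs{\t\sol}^2$ terms on the left of~\eqref{ineq:14CornilleauGuerrero} once $s\gtrsim \tT^2/\ep$ and $\phi\gtrsim \tT^{-2}$ are used.

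\textbf{Step 3: Eliminate $\p^2_{xx}\t\sol$ via the equation.} In the remaining mixed term, substitute $\p^2_{xx}\t\sol = \p_t\t\sol - \f{2}{\ep}\p_x\t\sol$ from~\eqref{eq:model:redim}, producing
\[
-8s\iint_{\tO} \chi\phi e^{-2s\alpha}\p_x\t\sol\,\p_t\t\sol\diff x\diff t + \f{16 s}{\ep}\iint_{\tO}\chi\phi e^{-2s\alpha}\abs{\p_x\t\sol}^2\diff x\diff t.
\]
The $\ep^{-1}$-term is absorbed by the left-hand side $s\phi e^{-2s\alpha}\abs{\p_x\t\sol}^2$ for $s\gtrsim 1/\ep$ (which follows from $s\gtrsim \tT^2/\ep$ when $\tT\lesssim 1$, after a further use of $\phi\gtrsim \tT^{-2}$). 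For the cross-term, Young's inequality with balance $\phi^2$ gives
\[
\Bigl|s\chi\phi e^{-2s\alpha}\p_x\t\sol\,\p_t\t\sol\Bigr| \leq \delta\, s^{-1}\phi^{-1}e^{-2s\alpha}\abs{\p_t\t\sol}^2 + \delta^{-1} s^3\phi^3 e^{-2s\alpha}\abs{\p_x\t\sol}^2,
\]
both of which are distributed quantities controlled by the left-hand side of~\eqref{ineq:14CornilleauGuerrero}.

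\textbf{Step 4: Collect the boundary terms and close the argument.} Tracking through the time integrations of $\chi\phi e^{-2s\alpha}$, the unabsorbed contributions at $x=0$ accumulate into expressions of the form $\ep^2 s^5\phi^5 e^{-2s\alpha}\abs{\t\sol}^2$ and $\ep^2 s^6\phi^7 e^{-2s\alpha}\abs{\t\sol}^2$ (originating from the higher-order $s^2\phi^2$ terms in the trace identity, after rebalancing powers of $\phi$ so as to exceed all the weights produced when writing $\p_x\psi$ back in terms of $\p_x\t\sol$), while $\ep^3 s^3\phi^3 e^{-2s\alpha}\abs{\t\sol}^2$ comes from the $\f{s}{\ep}$ reduction carried out above. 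Reporting the resulting bound of the trace into~\eqref{ineq:14CornilleauGuerrero} yields~\eqref{ineq:13CornilleauGuerrero}.

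The main obstacle is the accounting of weights: each application of Young's inequality introduces a dual weight that must be compatible with the ones available on the left of~\eqref{ineq:14CornilleauGuerrero}, and the asymmetry between $s\phi$ (for $\abs{\p_x\t\sol}^2$) and $s^{-1}\phi^{-1}$ (for $\abs{\p^2_{xx}\t\sol}^2$) on the LHS forces one to invoke the equation to eliminate $\p^2_{xx}\t\sol$. It is this elimination, combined with the boundary identity $\p_t\t\sol(t,0)=\f{2}{\ep}\p_x\t\sol(t,0)$, that explains why the final bound carries the factor $\ep^3$ and the inflated power $\phi^7$, which only the assumptions $s\gtrsim \ep+\tT+\tT^2/\ep$ and $\tT\lesssim 1$ allow to be reached by all other contributions.
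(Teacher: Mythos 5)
There is a genuine gap, and it is structural rather than a matter of missing detail. Your Steps 2 and 3 repeatedly claim absorptions that go the wrong way. In Step 2, the derivative $\p_x\bigl(\chi\phi e^{-2s\alpha}\bigr)=(\chi'\phi-\chi\phi-2s\chi\phi^2)e^{-2s\alpha}$ produces, after multiplication by $4s$, a distributed term $+8s^2\iint_{\tO}\chi\phi^2e^{-2s\alpha}\abs{\p_x\t\sol}^2$ with a \emph{positive} sign on the right. The only matching quantity on the left of~\eqref{ineq:14CornilleauGuerrero} is $s\phi e^{-2s\alpha}\abs{\p_x\t\sol}^2$, which is smaller by the factor $s\phi\gtrsim 1/\ep\gg1$; and the term $s^3\phi^3e^{-2s\alpha}\abs{\t\sol}^2$ cannot help because it carries $\abs{\t\sol}^2$, not $\abs{\p_x\t\sol}^2$. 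The same defect recurs in Step 3: the term $\f{16s}{\ep}\iint\chi\phi e^{-2s\alpha}\abs{\p_x\t\sol}^2$ exceeds the available $s\phi e^{-2s\alpha}\abs{\p_x\t\sol}^2$ by the factor $1/\ep$ (enlarging $s$ does not help since both sides are linear in $s$), and the Young dual term $\delta^{-1}s^3\phi^3e^{-2s\alpha}\abs{\p_x\t\sol}^2$ does not appear anywhere on the left of~\eqref{ineq:14CornilleauGuerrero}. This is not an accident of bookkeeping: the weighted Neumann trace at $x=0$ is exactly the bad-sign boundary term ($J_2$ in the proof of Proposition~\ref{prop:12fromCornilleauGuerrero}) that the Carleman computation pushes to the right-hand side, so converting $\abs{\p_t\t\sol}^2(t,0)$ back into $\abs{\p_x\t\sol}^2(t,0)$ via the boundary condition merely returns you to the quantity the previous proposition could not absorb. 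Step 4, which is where the claimed weights $\ep^2s^5\phi^5$, $\ep^2s^6\phi^7$, $\ep^3s^3\phi^3$ would have to emerge, is not actually carried out.

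The paper proceeds quite differently: it integrates the boundary term by parts in \emph{time}, reducing the problem to controlling $\int_0^{\tT}\phi e^{-2s\alpha}\abs{\t\sol}\abs{\p^2_{tt}\t\sol}(t,0)\diff t$, and then estimates $\p^2_{tt}\t\sol(t,0)$ through a dedicated energy estimate (Lemma~\ref{lem:remark4}) for the auxiliary unknown $\zeta=\theta(t)\p_t\t\sol$ with $\theta=e^{-s\alpha(t,L)}\phi^{-5/2}(t,0)$, which satisfies a forced version of~\eqref{eq:model:redim} with zero initial data. The weight $\theta$ is tuned precisely so that the resulting distributed contribution is dominated by $s^{-1}\phi^{-1}e^{-2s\alpha}\abs{\p_t\t\sol}^2$ on the left of~\eqref{ineq:14CornilleauGuerrero}, and a final Young splitting with weights $\phi^7$ against $\phi^{-5}$ produces the stated right-hand side. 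If you want to salvage your outline you would need to replace Steps 2--4 by some analogue of this time-differentiated energy argument; purely spatial integrations by parts cannot close the estimate.
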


\begin{proof}
Let us first integrate by parts $s\ep^2\int_0^{\t\TM} \phi e^{-2s\alpha} \vert \p_t\t\sol\vert^2(t,0) \diff t$ to get 
\begin{multline}\label{eq:23CG}
s{\ep^2} \int_0^{\t\TM} \phi  e^{-2s\alpha}\abs{\p_t\t\sol}^2(t,0)\diff t
\inserted{=-s{\ep^2}\int_0^{\t\TM} \t\sol \biggl\{ \partial_t \t\sol \p_t \left(\phi e^{-2s\alpha}\right)+\phi e^{-2s\alpha}\p_{tt}^2 \t\sol \biggr\}\diff t}
\\
=s{\ep^2}\int_0^{\t\TM} \left\{\f{1}{2}\p^2_{tt}(\phi e^{-2s\alpha}) \abs{\t\sol}^2 - \phi e^{-2s\alpha} \t\sol \p^2_{tt}(\t\sol) \right\} (t,0)\diff t
\\ 
 \lesssim s{\ep^2} \int_0^{\t\TM} \left\{s^2\tT^2\phi^5 e^{-2s\alpha} \abs{\t\sol}^2 + \phi e^{-2s\alpha} \abs{\t\sol}\abs{ \p^2_{tt}(\t\sol)} \right\} (t,0)\diff t.
\end{multline}
The first term on the right-hand side goes with the former control terms of $L(\psi);$ we now need to estimate $ \p^2_{tt}\t\sol(t,0).$ To do so, we prove the following lemma.

\begin{lemma}\label{lem:remark4}
Let us define the intermediate function 
$$\zeta(t,x):=\theta(t)\p_t\t\sol(t,x):=e^{-s\alpha(t,{\color{black}L})}\phi^{-\f{5}{2}}(t,0) \p_t\t\sol (t,x),$$
we have the estimate
\begin{equation}
\label{ineq:rmk4}
\ep \dst\int_0^{\t\TM} \abs{\p_t \zeta}^2(t,0) \diff t \lesssim {\color{myorange}\f{\t T^2 +1}{\ep^2}  }\iint_{\tO} \abs{\theta' }^2\abs{\p_t \t\sol}^2 \diff x\diff t 
+
{\color{myorange}\ep (1+ \tT)} \int_0^{\t\TM} \abs{\theta' }^2\abs{\p_t \t\sol}^2 (t,0)\diff t
\end{equation}
\end{lemma}

\begin{proof}
\inserted{From~\eqref{eq:model:redim}, w}e compute the system satisfied by $\zeta$ and find
\begin{equation}
\label{eq:zeta}
\left\{ 
\begin{array}{lll}
\p_t \zeta -\f{2}{\ep}\p_x \zeta - \p^2_{xx}\zeta = \theta' \p_t \t\sol &&{\text{in }}\tO,
\\ \\
\left(\p_t\zeta-\f{2}{\ep}\p_x\zeta\right)(t,0) =\theta' \p_t\t\sol(t,0), &\zeta(t,L)=0 & {\text{for }}t\in(0,\tT),
\\ \\
\zeta(0,x)=0&& {\text{for }}x\in(0,\LM).
\end{array}\right.
\end{equation}
 Let us first  multiply~\eqref{eq:zeta} by $\zeta$ and integrate \inserted{in size}:
\begin{multline}\label{ineq:zeta3}
\f{1}{2} \f{d}{\diff t} \Vert \zeta(t,\cdot) \Vert_{L^2(0,L)}^2  + \f{1}{\ep}\zeta(t,0)^2 + \int_0^L \abs{\p_x \zeta}^2\diff x +\f{\ep}{4}\f{d}{\diff t}\abs{\zeta(t,0)}^2
\\ 
=\f{\ep}{2} \zeta(t,0) \theta'(t)\p_t\t\sol(t,0)
+ \int_0^L \zeta \theta'(t)\p_t\t\sol \diff x
\\ 
\leq \f{1}{2\ep} \zeta(t,0)^2  + \f{\ep^3}{8} \abs{ \theta' \p_t\t\sol}^2(t,0) + \int_0^L \zeta\theta'\p_t\t\sol \diff x.
\end{multline}
We integrate~\eqref{ineq:zeta3} in time and apply the Gronwall lemma~\ref{lemma:Gronwall} to the inequality
\[\f{1}{2}  \Vert \zeta(t,\cdot) \Vert_{L^2(0,L)}^2   +\f{\ep}{4}\abs{\zeta(t,0)}^2
\leq \f{\ep^3}{8} \int_0^{\tT} \abs{\theta' \p_t\t\sol}^2(\sigma,0)\diff \sigma + \int_0^t\int_0^L \zeta\theta'\p_t\t\sol \diff x\diff \sigma
\]
by setting, applying Cauchy-Schwarz' inequality to the last term,
\begin{align*}f(t)&:=\sqrt{\int_0^L \abs{\zeta(t,x)}^2\diff x + \f{\ep}{2}\abs{\zeta(t,0)}^2},\\ f_0&:=\sqrt{\f{\ep^3}{4}\int_0^{\tT} \abs{\theta'\p_t\t\sol}^2 (\sigma,0)\diff \sigma},\\ 
g(\sigma)&=\sqrt{\int_0^L\abs{\theta'\p_t\t\sol}^2(\sigma,x)\diff x}.
\end{align*}
We deduce
\begin{multline}
\label{ineq:zeta}\Vert \zeta(t,\cdot) \Vert_{L^2(0,L)}^2 +\f{\ep}{2}\abs{\zeta(t,0)}^2 \leq  2\abs{\inserted{f}_0}^2  +2\left(\int_0^{\tT} g(\sigma)\diff \sigma\right)^2
\\ 
\leq  \f{\ep^3}{2} \dst\int_0^{\tT} \abs{ \theta' \p_t\t\sol}^2(t,0)\diff t  + 2\tT \iint_{\tO} \abs{\theta'\p_t\t\sol}^2 \diff x\diff t.
\end{multline}
We use this inequality to bound the term in $\abs{\p_x\zeta}$ in~\eqref{ineq:zeta3}:
\begin{multline}\label{ineq:dxzeta}
\dst\iint_{\tO} \abs{\p_x\zeta}^2 \diff x \diff t
\leq  \int_0^{\tT} \left\{
 \f{\ep^3}{8} \abs{ \theta' \p_t\t\sol}^2(t,0) + \int_0^L \zeta\theta'\p_t\t\sol \diff x\right\}\diff t
\\ 
\leq \f{\ep^3}{8} \int_0^{\tT} \abs{ \theta' \p_t\t\sol}^2(t,0)\diff t 
+ \f{1}{2}\int_0^{\tT} \int_0^L \left( \zeta^2(t,x) +\abs{\theta'\p_t\t\sol}^2(t,x)\right) \diff x\diff t
\\ 
\leq \f{\ep^3}{8} (1+\inserted{2}\tT) \int_0^{\tT} \abs{ \theta' \p_t\t\sol}^2(t,0)\diff t 
+ (\tT^2 +\inserted{\f{1}{2}})\int_0^{\tT} \int_0^L \abs{\theta'\p_t\t\sol}^2(t,x) \diff x\diff t
\\ 
\lesssim {\ep^3}(1+\tT) \int_0^{\tT} \abs{ \theta' \p_t\t\sol}^2(t,0)\diff t 
+(1+ \tT^2 )\dst\iint_{\tO} \abs{\theta'\p_t\t\sol}^2(t,x) \diff x\diff t.
\end{multline}
Integrating by parts and using the boundary condition, we notice that
\begin{align*}
-\dst\int_0^L \p_t \zeta \p^2_{xx}\zeta (t,x)\diff x&= \int_0^{\LM} (\p_{xt}^2 \zeta )(\p_x \zeta)(t,x)\diff x +
 (\p_t\zeta)(\p_x\zeta)(t,0) \\ 
 &=
 \f{\diff }{\diff t}\f{1}{2}\int_0^{\LM} \abs{\p_{x} \zeta}^2(t,x)\diff x 
+ \f{\ep}{2} \abs{\p_t\zeta}^2(t,0) - \f{\ep}{2} \theta'
\p_t\t\sol \p_t \zeta (t,0),
\end{align*}
hence multiplying~\eqref{eq:zeta} by $\p_t \zeta$ and integrating in $x$ gives 
\begin{multline}\label{ineq:zeta1}\Vert \p_t \zeta (t,\cdot) \Vert^2_{L^2(0,L)}
+ \f{\diff}{\diff t}\f{1}{2}\int_0^{\LM} \abs{\p_{x} \zeta}^2(t,x)\diff x 
+ \f{\ep}{2} \abs{\p_t\zeta}^2(t,0) \\ 
\leq  \int_0^{\LM} \left(\f{2}{\ep}\abs{\p_x \zeta}\abs{\p_t\zeta}(t,x) + \abs{\theta' \p_t\t\sol}\abs{\inserted{\p_t} \zeta}(t,x)\right)\diff x + \f{\ep}{2} \theta'
\p_t\t\sol \p_t \zeta (t,0).
\end{multline}
We integrate in time between $0$ and $t$ and find
\begin{multline*}
\int_0^t \Vert \p_t \zeta (\sigma,\cdot) \Vert^2_{L^2(0,L)}\diff \sigma
+\f{1}{2} \int_0^{\LM} \abs{\p_{x} \zeta}^2(t,x)\diff x 
+ \int_0^t \f{\ep}{2} \abs{\p_t\zeta}^2(\sigma,0)\diff \sigma \\
\leq \int_0^t \left\{\int_0^{\LM} \left(\f{2}{\ep}\abs{\p_x \zeta}\abs{\p_t\zeta}(\sigma,x) + \abs{\theta' \p_t\t\sol}\abs{\inserted{\p_t} \zeta}(\sigma,x)\right)\diff x + \f{\ep}{2} \theta'
\p_t\t\sol \p_t \zeta (\sigma,0)\right\}\diff \sigma 
\\ 
\leq
 \dst\iint_{\tO} \left(\f{\inserted{4}}{\ep^2}\abs{\p_x \zeta}^2 + \f{1}{\inserted{4}}\abs{\p_t\zeta}^2 + \abs{\theta' \p_t\t\sol}^2 +\f{1}{\inserted{4}}\abs{\inserted{\p_t} \zeta}^2\right)\diff x\diff \sigma 
 \\ + \f{\ep}{4}\int_0^t \left(\abs{\theta'
\p_t\t\sol}^2 +\abs{ \p_t \zeta}^2\right)(\sigma,0)\diff \sigma.
\end{multline*}
Using~\eqref{ineq:dxzeta} and~\eqref{ineq:zeta} we deduce
\begin{multline*}
\inserted{\f{1}{2}}\int_0^{\tT} \Vert \p_t \zeta (\sigma,\cdot) \Vert^2_{L^2(0,L)}\diff \sigma
+ \int_0^{\tT} \f{\ep}{\inserted{4}} \abs{\p_t\zeta}^2(\sigma,0)\diff \sigma
 \\
\leq 
 \dst\iint_{\tO} \left(\f{\inserted{4}}{\ep^2}\abs{\p_x \zeta}^2  + \abs{\theta' \p_t\t\sol}^2 \right)\diff x\diff \sigma + \f{\ep}{4}\int_0^t \abs{\theta'
\p_t\t\sol}^2 (\sigma,0)\diff \sigma
\\ 
\lesssim
\ep(1+\tT) \dst\int_0^{\tT} \abs{ \theta' \p_t\t\sol}^2(t,0)\diff t  + \f{\t T^2 +1}{\ep^2} \iint_{\tO} \abs{\theta'\p_t\t\sol}^2 \diff x\diff t.
\end{multline*}
\end{proof}

Since we have $\p_t\zeta=\theta'\p_t\t\sol + \theta \p^2_{tt}\t\sol,$~\eqref{ineq:rmk4} implies by Young's inequality
\begin{multline}\label{ineq:theta1} \ep\dst\int_0^{\tT} \abs{\theta}^2 \abs{\p^2_{tt}\t\sol}^2(t,0) \diff t 
\\ \lesssim 
\left(\f{\color{myorange} \t T^2+1}{\ep^2} \iint_{\tO} \abs{\theta'\p_t\t\sol}^2 \diff x\diff t + {\color{myorange}{\ep} (1+\t T)}\int_0^{\tT} \abs{ \theta' \p_t\t\sol}^2(t,0)\diff t  \right).
\end{multline}
We now integrate by parts the last integral and use again Young's inequality with appropriate weights
\begin{multline*} {\color{myorange}{\ep} (1+\t T)}\int_0^{\tT} \abs{ \theta' \p_t\t\sol}^2(t,0)\diff t 
=-  {\color{myorange}{\ep} (1+\t T)}\int_0^{\tT} \t\sol \p_t\left(\p_t\t\sol \abs{\theta'}^2\right)(t,0)\diff t
\\ 
= -  {\color{myorange}{\ep} (1+\t T)}\int_0^{\tT} \left\{\t\sol \p_t \t\sol \p_t(\abs{\theta'}^2) \inserted{+} \t\sol \p_{tt}^2 \t\sol \abs{\theta'}^2\right\}(t,0)\diff t
\\ 
\leq \int_0^{\tT}  {\color{myorange}{\ep} (1+\t T)} \f{1}{2}\abs{\t\sol}^2  \p^2_{tt}(\abs{\theta'}^2) (t,0)\diff t
+{\color{red}\f{c}{2}}  {\color{myorange}{\ep} (1+\t T)} \int_0^{\tT} \abs{\p_{tt}^2 \t\sol}^2 \theta^2 (t,0)\diff t
\\ + {\color{red}\f{1}{2c}} {\color{myorange}{\ep} (1+\t T)}\int_0^{\tT} \abs{\theta'}^4 \theta^{-2} \abs{\t\sol}^2 (t,0)\diff t.
\end{multline*}
Choosing $c=\f{\inserted{1}}{(1+\t \TM)}$ and injecting this in~\eqref{ineq:theta1} gives 
\begin{multline}\label{ineq:inter1}
\f{\ep}{2}\int_0^{\t\TM} \abs{\theta}^2 \abs{\p^2_{tt}\t\sol}^2(t,0) \diff t 
\lesssim 
\biggl(\f{\color{myorange} \t \TM^2+1}{\ep^2}\dst\iint_{\tO} \abs{\theta'\p_t\t\sol}^2 \diff x\diff t 
 \\+ {\color{myorange}{\ep} (1+\t \TM)} \int_0^{\tT} \abs{\t\sol}^2  \p^2_{tt}(\abs{\theta'}^2) (t,0)\diff t
+{\ep} {\color{myorange} (1+\t \TM)^2} \int_0^{\t\TM} \abs{\theta'}^4 \theta^{-2} \abs{\t\sol}^2 (t,0)\diff t\biggr).
\end{multline}
Taking the definition of $\theta$ and Lemma~\ref{lem:alphax}, \inserted{and using that $\phi(t,L)\leq \phi(t,x)\leq \phi(t,0)$ and $1\lesssim \ep s\phi\leq s \phi,$} we have
$$\vert \p_t \theta \vert \lesssim \left(s \t \TM\phi^{-\f{1}{2}} (t,0) +\t \TM \phi^{-\f{3}{2}} (t,0)\right)e^{-s\alpha (t,{\color{black}L})} \lesssim s\t\TM \phi^{-\f{1}{2}}(t,x)e^{-s\alpha (t,x)},$$
and similarly we compute
$$ \vert \p^2_{tt} \theta \vert \lesssim s^2\t\TM^2\phi^{\f{3}{2}}\inserted{(t,0)} e^{-s\alpha\inserted{(t,L)}},\qquad
 \vert \p^3_{ttt} \theta \vert \lesssim s^3\t\TM^3\phi^{\f{7}{2}}\inserted{(t,0)} e^{-s\alpha\inserted{(t,L)}},
$$
so that
$$\vert \p^2_{tt} (\abs{ \theta'}^2) \lesssim s^4\t\TM^4\phi^3 \inserted{(t,0)}e^{-2s\alpha\inserted{(t,L)}},
$$
and multiplying~\eqref{ineq:inter1} by $s^{-3}\f{\ep^2}{\t\TM^2(1+\t\TM^2)}$ to make appear the first term of the left-hand side of~\eqref{ineq:14CornilleauGuerrero}, 
we obtain~
\begin{multline*}{\ep^3} \f{s^{-3}}{\t\TM^2(1+\t\TM^2)} \int_0^{\tT} \abs{\theta}^2 \abs{\p^2_{tt}\t\sol}^2(t,0) \diff t 
\lesssim 
\biggl(s^{-1} \dst\iint_{\tO} \phi^{-1}e^{-2s\alpha} \abs{\p_t\t\sol}^2 \diff x\diff t 
\\+ {\color{myorange}\f{\ep^3}{\t\TM^2(1+\t \TM)}} s^{-3}\int_0^{\tT} \abs{\t\sol}^2  \p^2_{tt}(\abs{\theta'}^2) (t,0)\diff t
+\f{{\ep}^3}{\t \TM^2} s^{-3}\int_0^{\tT} \abs{\theta'}^4 \theta^{-2} \abs{\t\sol}^2 (t,0)\diff t\biggr),
\end{multline*}
{\it i.e.}
\begin{multline}{\ep^3} \f{s^{-3}}{\t\TM^2(1+\t\TM^2)} \int_0^{\tT} e^{-2s\alpha}\phi^{-5} \abs{\p^2_{tt}\t\sol}^2(t,0) \diff t 
\lesssim 
\biggl(s^{-1} \dst \iint_{\tO} \phi^{-1}e^{-2s\alpha} \abs{\p_t\t\sol}^2 \diff x\diff t 
\\ 
+ {\color{myorange}\f{\ep^3}{(1+\t \TM)}} s \t\TM^2 \int_0^{\tT} \phi^3\inserted{(t,0)} e^{-2s\alpha\inserted{(t,L)}} \abs{\t\sol}^2   (t,0)\diff t
+{\ep}^3s{\t \TM^2}  \int_0^{\tT} \phi^3 \inserted{(t,0)}\abs{\t\sol}^2 (t,0)e^{-2s\alpha\inserted{(t,L)}}\diff t\biggr)
\\
{\inserted {\lesssim 
\biggl(s^{-1} \dst \iint_{\tO} \phi^{-1}e^{-2s\alpha} \abs{\p_t\t\sol}^2 \diff x\diff t 
+{\ep}^3s{\t \TM^2}  \int_0^{\tT} \phi^3 \inserted{(t,0)}\abs{\t\sol}^2 (t,0)e^{-2s\alpha\inserted{(t,L)}}\diff t\biggr)
}}.\label{ineq:interm2}
\end{multline}
We use Young's inequality to estimate the last term of~\eqref{eq:23CG} as follows
\begin{multline}\label{eq:23CG:Y}
s\ep^2\int_0^{\t\TM} \phi e^{-2s\alpha}\vert u\vert \vert \p^2_{tt} u \vert \diff t
\leq
\f{s^5\ep}{2} \t\TM^2(1+\t\TM^2)\int_0^{\t\TM} \phi^7e^{-2s\alpha}\vert u\vert^2 (t,0) \diff t
  \\ +  \f{\eps^{3}s^{-3}}{2\t\TM^2(1+\t\TM^2)}\int_0^{\t\TM}e^{-2s\alpha}\phi^{-5} \vert \p^2_{tt} u \vert^2 (t,0) \diff t
\end{multline}
and we now have all the ingredients to improve~\eqref{ineq:14CornilleauGuerrero} by getting rid of the time-derivative term of the right-hand side and incorporating the previous inequalities in~\eqref{ineq:14CornilleauGuerrero}, first~\eqref{eq:23CG}, then~\eqref{eq:23CG:Y}, and finally~\eqref{ineq:interm2}:
\begin{multline*}
\dst\iint_{\tO} \left\{s^{-1}\phi^{-1}e^{-2s\alpha}\left(\abs{\p_t\t\sol}^2
+\abs{\p^2_{xx}\t\sol}^2 \right) +  \left(s^3\phi^3 e^{-2s\alpha} \abs{\t\sol}^2 +s\phi e^{-2s\alpha} \abs{\p_x\t\sol}^2\right)\right\}\diff x\diff t 
\\ 
+ s\int_0^{\t\TM}\phi e^{-2s\alpha}\abs{\p_x\t\sol}^2(t,L)\diff t
\\ \\
\lesssim \inserted{\ep^2 s^5}\int_0^{\t\TM} \phi^5 e^{-2s\alpha}\abs{\t\sol}^2(t,0) \diff t 
+ s{\ep^2}\int_0^{\t\TM} \phi  e^{-2s\alpha}\abs{\p_t\t\sol}^2(t,0)\diff t
\\
\lesssim \inserted{\ep^2 s^5}\int_0^{\t\TM} \phi^5 e^{-2s\alpha}\abs{\t\sol}^2(t,0) \diff t 
+ s{\ep^2} \int_0^{\t\TM} \left\{s^2\tT^2\phi^5 e^{-2s\alpha} \abs{\t\sol}^2 + \phi e^{-2s\alpha} \abs{\t\sol}\abs{ \p^2_{tt}(\t\sol)} \right\} (t,0)\diff t
\\ 
\lesssim (\inserted{\ep^2 s^5}+s^3\ep^2\t\TM^2)\int_0^{\t\TM} \phi^5 e^{-2s\alpha}\abs{\t\sol}^2(t,0) \diff t 
+ s\ep^2\int_0^{\t\TM} \phi e^{-2s\alpha} \abs{\t\sol}\abs{ \p^2_{tt}(\t\sol)}  (t,0)\diff t
\\ 
\lesssim (\inserted{\ep^2 s^5}+s^3\ep^2\t\TM^2)\int_0^{\t\TM} \phi^5 e^{-2s\alpha}\abs{\t\sol}^2(t,0) \diff t + \f{s^5\ep}{2} \t\TM^2(1+\t\TM^2)\int_0^{\t\TM} \phi^7e^{-2s\alpha}\vert u\vert^2 (t,0) \diff t
\\  + \f{\eps^{3}s^{-3}}{2\t\TM^2(1+\t\TM^2)}\int_0^{\t\TM}e^{-2s\alpha}\phi^{-5} \vert \p^2_{tt} u \vert^2 (t,0) \diff t 
\\ 
\lesssim \int_0^{\t\TM} \left( (\inserted{\ep^2s^5}+s^3\ep^2\t\TM^2)\phi^5 + \f{s^5\ep}{2} \t\TM^2(1+\t\TM^2) \phi^7 \right)e^{-2s\alpha}\abs{\t\sol}^2(t,0) \diff t 
\\ + 
\biggl(s^{-1} \dst\iint_{\tO} \phi^{-1}e^{-2s\alpha} \abs{\p_t\t\sol}^2 \diff x\diff t 
\inserted{+{\ep}^3 s {\t \TM^2}  \int_0^{\tT} \phi^3  \abs{\t\sol}^2 (t,0)e^{-2s\alpha(t,L)}\diff t\biggr)}.
\end{multline*}
Hence finally, \inserted{due to the fact that the constant on the right-hand side may be tuned as to be smaller than on the left-hand side for the term with $\abs{\p_t\t u}^2:$}
\begin{multline*}
\dst\iint_{\tO} \left\{s^{-1}\phi^{-1}e^{-2s\alpha}\left(\abs{\p_t\t\sol}^2
+\abs{\p^2_{xx}\t\sol}^2 \right) +  \left(s^3\phi^3 e^{-2s\alpha} \abs{\t\sol}^2 +s\phi e^{-2s\alpha} \abs{\p_x\t\sol}^2\right)\right\}\diff x\diff t 
\\ 
+ s\int_0^{\t\TM}\phi e^{-2s\alpha}\abs{\p_x\t\sol}^2(t,L)\diff t
\\ 
\lesssim
\int_0^{\t\TM} \left( (\inserted{\ep^2 s^5}+s^3\ep^2\t\TM^2)\phi^5 + {s^5\ep} \t\TM^2(1+\t\TM^2) \phi^7 
+ {\ep}^3 s {\t \TM^2}   \phi^3 
\right)
e^{-2s\alpha}\abs{\t\sol}^2(t,0) \diff t ,
\end{multline*}
which directly implies~\eqref{ineq:13CornilleauGuerrero} and ends the proof of Proposition~\ref{prop:10fromCornilleauGuerrero}.
\end{proof}
We can now go back to the original system and state the following inequality.
\begin{theorem}\label{th:carleman}
Let $\uin \in \Htwo\cap \HoneR$ and  $\sol  \in \Czero[(0,T);\Htwo\cap\HoneR] \cap \Cone[(0,T);\Ltwo]$ 
the unique solution to~\eqref{eq:second-order-L}. There exists $s_0>0$ such that, for weight functions $\alpha$ and $\phi$ defined by~\eqref{def:alpha} and $s\geq s_0 (\ep \TM+\ep \TM^2)$, we have   
\begin{multline}
\label{ineq:thCarleman}
\dst\iint_{\Omega} \left\{s^{-1}\phi^{-1}e^{-2s\alpha}\left(\f{4}{b^2 \ep^2}\abs{\p_t\sol}^2
+\abs{\p^2_{xx}\sol}^2 \right) \right.
\\
\left.+  \left(s^3\phi^3 e^{-2s\alpha} \abs{\t\sol}^2 +s\phi e^{-2s\alpha} \abs{\p_x\t\sol}^2\right)\right\}\diff x\diff t 
 +  s\int_0^{\TM}\phi e^{-2s\alpha}\abs{\p_x\sol}^2(t,L)\diff t
 \\
\lesssim
\int_0^{\TM} \left( \inserted{ \ep^2 s^5 \phi^5 + \ep^2 s^6\phi^7+\ep^3s^3  \phi^3} 
\right)e^{-2s\alpha}\abs{\sol}^2(t,0) \diff t.
\end{multline}
\end{theorem}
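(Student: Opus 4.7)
The plan is to derive~\eqref{ineq:thCarleman} directly from the rescaled Carleman estimate~\eqref{ineq:13CornilleauGuerrero} of Proposition~\ref{prop:10fromCornilleauGuerrero} by reverting the time rescaling $\tau=\f{b\ep}{2}t$, $\t\sol(\tau,x)=\sol(t,x)$, $\tT=\f{b\ep}{2}T$ that led to~\eqref{eq:model:redim}. Since~\eqref{eq:second-order-L} and~\eqref{eq:model:redim} describe the same solution up to this time rescaling, any weighted Carleman estimate for one immediately yields one for the other, once the Jacobian of the change of variables and the chain rule are accounted for.

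First, I would apply Proposition~\ref{prop:10fromCornilleauGuerrero} to $\t\sol$ on $\tO=[0,\tT]\times[0,L]$, with weights $\t\alpha,\t\phi$ given by~\eqref{def:alpha}, and then substitute $\tau\mapsto t=\f{2\tau}{b\ep}$ in every integral appearing in~\eqref{ineq:13CornilleauGuerrero}. The Jacobian $\diff\tau=\f{b\ep}{2}\diff t$ contributes an overall factor that is common to both sides of the inequality and therefore harmless; the chain-rule identities $\p_\tau\t\sol=\f{2}{b\ep}\p_t\sol$, $\p_x\t\sol=\p_x\sol$, $\p^2_{xx}\t\sol=\p^2_{xx}\sol$ then produce the explicit prefactor $\f{4}{b^2\ep^2}$ in front of $|\p_t\sol|^2$ that appears in~\eqref{ineq:thCarleman}, while the $x$-derivative terms are unaffected.

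Second, I would introduce the weight functions $\alpha(t,x)=\f{\lambda-e^{\eta(x)}}{t(T-t)}$ and $\phi(t,x)=\f{e^{\eta(x)}}{t(T-t)}$, {\it i.e.} the same formula~\eqref{def:alpha} but applied to the original time variable $t\in[0,T]$ and the original final time $T$ (instead of $\tT$). A direct computation gives $\t\alpha(\tau,x)=\f{4}{b^2\ep^2}\alpha(t,x)$ and $\t\phi(\tau,x)=\f{4}{b^2\ep^2}\phi(t,x)$ whenever $\tau=\f{b\ep}{2}t$. Renormalising the Carleman parameter via $\t s=\f{b^2\ep^2}{4}s$ ensures $\t s\t\alpha=s\alpha$ and $\t s\t\phi=s\phi$, so that every weighted term of the form $\t s^{a}\t\phi^{b}$ with $a=b$ -- which covers all four distributed terms on the left-hand side and the boundary term at $x=L$ -- transforms identically into $s^{a}\phi^{b}$. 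The only right-hand side term with an exponent mismatch $a\neq b$ produces an additional constant factor depending only on $b$, which is absorbed into the implicit $\lesssim$-constant.

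Finally, the hypothesis $\t s\geq C(\ep+\tT+\tT^{2}/\ep)$ of Proposition~\ref{prop:10fromCornilleauGuerrero} translates under the substitution $\t s=\f{b^{2}\ep^{2}}{4}s$, $\tT=\f{b\ep}{2}T$ into a condition of the form $s\geq s_{0}(\ep T+\ep T^{2})$, with $s_{0}$ depending only on $b$ and on the constant $C$ of Proposition~\ref{prop:10fromCornilleauGuerrero}. The only real obstacle is the careful bookkeeping of every power of $\ep$, $s$, and $\phi$ through the substitution: although each individual identification ($s\phi=\t s\t\phi$, $s\alpha=\t s\t\alpha$, and the Jacobian) is elementary, the inequality~\eqref{ineq:13CornilleauGuerrero} contains many weighted terms and one has to check that each transforms consistently into the corresponding term of~\eqref{ineq:thCarleman}. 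Once the key identities $\t s\t\phi=s\phi$ and $|\p_\tau\t\sol|^{2}=\f{4}{b^{2}\ep^{2}}|\p_t\sol|^{2}$ are in hand, however, the passage from~\eqref{ineq:13CornilleauGuerrero} to~\eqref{ineq:thCarleman} is essentially mechanical.
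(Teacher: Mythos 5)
Your overall strategy---undoing the time rescaling $\tau=\f{b\ep}{2}t$ in~\eqref{ineq:13CornilleauGuerrero}---is exactly what the paper does, and with the paper's reading of the statement (the weights $\alpha,\phi$ of~\eqref{def:alpha} are kept as they are, i.e.\ built from $\tT$, and the Carleman parameter $s$ is not renormalised) the proof is a one-line change of variables: the Jacobian $\f{b\ep}{2}$ appears on both sides and cancels, and the chain rule turns $\abs{\p_\tau\t\sol}^2$ into $\f{4}{b^2\ep^2}\abs{\p_t\sol}^2$, which is precisely~\eqref{ineq:thCarleman}. The condition $s\geq C(\ep+\tT+\tT^2/\ep)$ of Proposition~\ref{prop:10fromCornilleauGuerrero} then reads $s\geq C(\ep+\f{b\ep}{2}T+\f{b^2}{4}\ep T^2)\sim s_0(\ep T+\ep T^2)$ with no further manipulation.

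The gap is in your extra renormalisation step, which is where the ``mechanical bookkeeping'' actually fails in two places. First, setting $\t s=\f{b^2\ep^2}{4}s$ and $\t\phi=\f{4}{b^2\ep^2}\phi$ gives $\t s^{6}\t\phi^{7}=(\t s\t\phi)^{6}\,\t\phi=\f{4}{b^2\ep^2}s^{6}\phi^{7}$, so the mismatched right-hand-side term $\ep^2\t s^{6}\t\phi^{7}$ becomes $\f{4}{b^2}s^{6}\phi^{7}$: the extra factor is $\f{4}{b^2\ep^2}$, which depends on $\ep$ and cannot be absorbed into the $\lesssim$-constant. You would lose the factor $\ep^2$ in front of $s^{6}\phi^{7}$ in~\eqref{ineq:thCarleman}, and that $\ep$-dependence is exactly what is tracked afterwards to obtain $C_C\sim\ep^{-2}T^{-5}(1+T)^3e^{\f{c}{\ep}(1+T^{-1})}$ in Theorem~\ref{coroll:Carleman}. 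Second, under the same substitution the hypothesis $\t s\geq C(\ep+\tT+\tT^2/\ep)$ becomes $s\geq\f{4C}{b^2\ep}\bigl(1+\f{b}{2}T+\f{b^2}{4}T^2\bigr)$, i.e.\ $s\gtrsim(1+T+T^2)/\ep$, not $s\geq s_0(\ep T+\ep T^2)$ as you claim and as the theorem requires. Both problems disappear if you drop the renormalisation and simply change the integration variable, leaving $s$, $\alpha$ and $\phi$ untouched.
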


\begin{proof}
 We recall~\eqref{ineq:13CornilleauGuerrero}:

\begin{multline*}
\dst\iint_{\tO} \left\{s^{-1}\phi^{-1}e^{-2s\alpha}\left(\abs{\p_t\t\sol}^2
+\abs{\p^2_{xx}\t\sol}^2 \right) +  \left(s^3\phi^3 e^{-2s\alpha} \abs{\t\sol}^2 +s\phi e^{-2s\alpha} \abs{\p_x\t\sol}^2\right)\right\}\diff x\diff t 
\\ 
\qquad \qquad + s\int_0^{\t\TM}\phi e^{-2s\alpha}\abs{\p_x\t\sol}^2(t,L)\diff t
\lesssim
\int_0^{\t\TM} \left( \inserted{ \ep^2 s^5 \phi^5 + \ep^2 s^6\phi^7+\ep^3s^3  \phi^3} 
\right)e^{-2s\alpha}\abs{\t\sol}^2(t,0) \diff t,
\end{multline*}
which implies directly

\begin{multline*}
\dst\iint_{\Omega} \left\{s^{-1}\phi^{-1}e^{-2s\alpha}\left(\f{2}{b \ep}\abs{\p_t\sol}^2 +\f{b\ep}{2}\abs{\p^2_{xx}\sol}^2 \right) 
\right.
\\ \left. 
+  \f{b\ep}{2} \left(s^3\phi^3 e^{-2s\alpha} \abs{\t\sol}^2 +s\phi e^{-2s\alpha} \abs{\p_x\t\sol}^2\right)\right\}\diff x\diff t 
+ \f{b\ep}{2} s\int_0^{\TM}\phi e^{-2s\alpha}\abs{\p_x\sol}^2(t,L)\diff t
\\
\lesssim
\int_0^{\TM} \f{b\ep}{2} \left( \inserted{ \ep^2 s^5 \phi^5 + \ep^2 s^6\phi^7+\ep^3s^3  \phi^3} 
\right)e^{-2s\alpha}\abs{\sol}^2(t,0) \diff t ,
\end{multline*}
which is~\eqref{ineq:thCarleman}.
\end{proof}

\begin{proof}
We now have all the ingredients to conclude the proof of Theorem~\ref{coroll:Carleman}.
We first upper-bound the right-hand side of~\eqref{ineq:thCarleman}.

Denoting $z=\f{\inserted{4}}{\ep^2 \inserted{b^2} t (\TM-t)}\inserted{=\phi(t,0)e^{-2L},}$ we compute the weight  function $\phi^\inserted{\beta}e^{-2s\alpha}$ as
\begin{multline*}f(z)=\phi^\inserted{\beta}e^{-2s\alpha}(t,0)=e^{\inserted{2\beta}L}z^\inserted{\beta} e^{-2s(\lambda-e^{2L})z}
\\
\implies \inserted{\max\limits_z} f(z)=f\left(z_{\min}:=\f{\inserted{\beta}}{2s(\lambda-e^{2L})}\right)=e^{\inserted{2\beta}L} (\f{\inserted{\beta}}{2s(\lambda-e^{2L})})^\inserted{\beta}e^{-\inserted{\beta}}.
\end{multline*}
However, we have $z\in [\f{\inserted{16}}{\ep^2\inserted{b^2}\TM^2},+\infty)$ and we have chosen $s\geq \ep (\TM^2 +\TM)$ and $\ep <1$ so that $z_{\min}=\f{\inserted{\beta}}{2s(\lambda-e^{2L})} \lesssim  \f{\inserted{16}}{\ep^2\inserted{b^2}\TM^2}$. We can thus bound $\phi^\inserted{\beta} e^{-2s\alpha}$ by $f(\f{\inserted{16}}{\ep^2\inserted{b^2}\TM^2})$ and write, \inserted{successively for $\beta=7,\,5,\,3$ in order to bound each term of the right-hand side of~\eqref{ineq:thCarleman},}
$$\int_0^{\TM} {b\ep^\inserted{3}}  s^\inserted{6}\phi^7e^{-2s\alpha}\abs{\sol}^2(t,0) \diff t 
\lesssim b \ep^\inserted{3} s^\inserted{6} e^{14L} (\f{\inserted{4}}{\ep\inserted{b}\TM})^{14}e^{-\f{\inserted{32}s(\lambda-e^{2L})}{\ep^2\inserted{b^2}\TM^2}}\int_0^{\TM} \abs{\sol}^2(t,0) \diff t ,
$$
\inserted{
$$\int_0^{\TM} {b\ep^\inserted{3}}  s^\inserted{5}\phi^5e^{-2s\alpha}\abs{\sol}^2(t,0) \diff t 
\lesssim b \ep^\inserted{3} s^\inserted{5} e^{10L} (\f{\inserted{4}}{\ep\inserted{b}\TM})^{10}e^{-\f{\inserted{32}s(\lambda-e^{2L})}{\ep^2\inserted{b^2}\TM^2}}\int_0^{\TM} \abs{\sol}^2(t,0) \diff t ,
$$
$$\int_0^{\TM} {b\ep^\inserted{4}}  s^\inserted{3}\phi^3e^{-2s\alpha}\abs{\sol}^2(t,0) \diff t 
\lesssim b \ep^\inserted{4} s^\inserted{3} e^{6L} (\f{\inserted{4}}{\ep\inserted{b}\TM})^{6}e^{-\f{\inserted{32}s(\lambda-e^{2L})}{\ep^2\inserted{b^2}\TM^2}}\int_0^{\TM} \abs{\sol}^2(t,0) \diff t .
$$
}
Let us now give a lower bound for the left-hand side of~\eqref{ineq:thCarleman}. On the interval $[\f{\TM}{4},\f{3\TM}{4}]$ we have $\phi \inserted{\approx} \f{1}{\ep^2\TM^2}$ so that
$$s^3\phi^3 e^{-2s\alpha} \gtrsim \f{s^3}{\ep^6\TM^6} e^{-\f{\inserted{8}s(\lambda-e^L)}{\ep^2\inserted{b^2}T^2}}$$
and we then obtain~from~\eqref{ineq:thCarleman}
\[
\f{s^3}{\ep^6\TM^6} e^{-\f{\inserted{8}s(\lambda-e^L)}{\ep^2\inserted{b^2}T^2}} \Vert u\Vert_{L^2([0,T]\times[0,L]}^2
\lesssim
 b \ep^\inserted{3} s^\inserted{6} e^{14L} (\f{\inserted{4}}{\ep\inserted{b}\TM})^{14}e^{-\f{\inserted{32} s(\lambda-e^{2L})}{\ep^2\inserted{b^2}\TM^2}}\int_0^{\TM} \abs{\sol}^2(t,0) \diff t .
\]
which we simplify into
\[ \Vert u\Vert_{L^2([\f{\TM}{4},\f{3\TM}{4}]\times[0,L]}^2
\lesssim
C_{C} \int_0^{\TM} \abs{\sol}^2(t,0) \diff t = \ep^\inserted{3} s^\inserted{3} (\ep\TM)^{-8}e^{-\f{\inserted{8}s(3\lambda+e^L-4e^{2L})}{\ep^2\inserted{b^2}\TM^2}}\int_0^{\TM} \abs{\sol}^2(t,0) \diff t .
\]
Choosing $s\sim \ep(\TM^2+\TM)$ we have $C_C\sim \inserted{\ep^{-2} T^{-5}(1+\TM)^3}e^{\f{c}{\ep}(1+\TM^{-1})}.$ \inserted{We check that the two other terms computed above, coming from $\beta=3$ and $\beta=5,$ give rise to smaller terms, and this ends the proof of Theorem~\ref{coroll:Carleman}.}
\end{proof}
\inserted{This result, together with the exponential stability proved in Proposition~\ref{prop:diffusion-energy-estimate},  provides us with observability in {\it final} time: it suffices to apply~\eqref{ineq:expostab} between $t_0\in[\f{T}{4},\f{3T}{4}]$ and $T$ instead of $0$ and $t,$ and then~\eqref{ineq:Carleman}. We let the details of this proof to the reader. To obtain {\it initial} time stability, we use the log-convexity estimate stated by Proposition~\ref{prop:logconv}. We finally obtain the main result of this article, given by the following observability inequality.}
\begin{theorem}\label{th:initobserv}
Let $\uin \in \Htwo\cap \HoneR$, with $\Vert \uin \Vert_{\Hone} \leq M$, and  $\sol  \in \Czero[(0,T);\Htwo\cap\HoneR] \cap \Cone[(0,T);\Ltwo]$  the unique solution to~\eqref{eq:second-order-L}. Then we have the following observability inequality, for constants $c, C$ depending only on $b$ and $L,$ and defining the increasing function $\rho: x\mapsto x e^x:$
 \begin{equation}\label{estim:observ:final}
 \left\|\uin \right\|_{L^2(0,L)}^{2}+{\ep}\vert \uin(0)\vert^2  \lesssim \f{Ce^{\f{\inserted{2}L}{\ep}}M^2}{
 \rho^{-1} \left(C T^\inserted{7} \inserted{\ep (1+T)^{-3} e^{\f{2L}{\ep}}}{e^{-\f{c}{\ep} (1+T^{-1})}}\f{ M^2}{\int_0^T \vert \sol\vert^2 (t,0) \diff t}\right)
 } \f{T}{\ep}.
 \end{equation}
\end{theorem}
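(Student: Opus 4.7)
The strategy is to combine the Carleman estimate of Theorem~\ref{coroll:Carleman} with the log-convexity estimate of Proposition~\ref{prop:logconv}, transporting the information from boundary observation on $[0,T]$ back to the initial condition via some intermediate time $t_0 \in [T/4, 3T/4]$. First, denote $N^2 \coloneqq \int_0^T |\sol(t,0)|^2 \diff t$ and $X \coloneqq \|\sol^0\|_{\Ltwo}^2 + \tfrac{\ep}{2}|\sol^0(0)|^2 = \|z^0\|_\stateSpace^2$. The Carleman inequality~\eqref{ineq:Carleman} yields $\|\sol\|_{L^2([T/4,3T/4]\times[0,L])}^2 \lesssim C_C\, N^2$, and since trivially $\ep \int_{T/4}^{3T/4} |\sol(t,0)|^2 \diff t \le \ep N^2$, a mean value argument produces a time $t_0 \in [T/4,3T/4]$ with
\[
\|z(t_0)\|_\stateSpace^2 \;\lesssim\; \frac{C_C}{T}\,N^2.
\]

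Next, I apply Proposition~\ref{prop:logconv} on the interval $[0,t_0]$ to the shifted state $\t z = (\solShift, \sqrt{\ep/2}\solShift(t,0))$, which gives
\[
\|\t z(0)\|_\stateSpace^2 \;\le\; \exp\!\left(\frac{2(\modelOpShift \t z^0,\t z^0)_\stateSpace}{\|\t z^0\|_\stateSpace^2}\,t_0\right) \|\t z(t_0)\|_\stateSpace^2.
\]
Using $\tilde u^0(x) = e^{x/\ep} u^0(x)$ and $\|u^0\|_{\Hone}\le M$, direct computation of $\modelFormShift$ gives $(\modelOpShift \t z^0,\t z^0)_\stateSpace = \modelFormShift(\t z^0,\t z^0) \lesssim \frac{b}{\ep} e^{2L/\ep} M^2$ (the contributions of $\|\tilde u^0\|^2/\ep$, $\ep\|(\tilde u^0)'\|^2$, and $|\tilde v^0|^2$ are all dominated by this bound, using the 1D Sobolev embedding for the pointwise term $|u^0(0)|\le C M$). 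Since $e^{x/\ep}\ge 1$ on $[0,L]$ we have $\|\t z^0\|_\stateSpace^2 \ge X$; conversely $\|\t z(t_0)\|_\stateSpace^2 \le e^{2L/\ep}\|z(t_0)\|_\stateSpace^2$. Combining,
\[
X \;\le\; \exp\!\left(\frac{A}{X}\right) B, \qquad A \coloneqq \frac{C e^{2L/\ep} M^2}{\ep}\,t_0,\quad B \coloneqq e^{2L/\ep}\,\frac{C\,C_C}{T}\,N^2.
\]

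The last step is the inversion using $\rho(x)=xe^x$: from $X e^{-A/X}\le B$ we deduce $\rho(A/X) = (A/X)e^{A/X} \ge A/B$, so $A/X \ge \rho^{-1}(A/B)$, which yields
\[
X \;\le\; \frac{A}{\rho^{-1}(A/B)} \;\lesssim\; \frac{e^{2L/\ep}M^2\,T/\ep}{\rho^{-1}\!\left(\frac{C\,M^2\,T^2}{\ep\, C_C\,N^2}\right)}.
\]
Substituting $C_C \sim \ep^{-2}T^{-5}(1+T)^3 e^{(c/\ep)(1+T^{-1})}$ from Theorem~\ref{coroll:Carleman} into the argument of $\rho^{-1}$ yields the stated form, up to the precise packaging of $\ep$- and $T$-dependent prefactors. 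The main technical obstacle is not any single step (each reduces to a direct estimate or a one-line algebraic manipulation) but tracking the $\ep$- and $T$-exponents accurately through the two exponential changes of variable (the weight $e^{x/\ep}$ and the Carleman weight $e^{-s\alpha}$) so that the final transcendental inversion delivers precisely the form involving $\rho^{-1}$ announced in the theorem.
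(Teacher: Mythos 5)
Your proposal is correct and follows essentially the same route as the paper: the Carleman estimate of Theorem~\ref{coroll:Carleman} controls the state on $[\f{T}{4},\f{3T}{4}]$, the log-convexity estimate of Proposition~\ref{prop:logconv} for the symmetrised state propagates this back to $t=0$, and the transcendental inequality is inverted via $\rho(x)=xe^x$. The only immaterial differences are that you select an intermediate time $t_0$ by a mean-value argument where the paper integrates the squared log-convexity inequality over $[\f{T}{4},\f{3T}{4}]$, and that you keep the factor $e^{2L/\ep}$ explicit where the paper absorbs it into the constant $c$ of the Carleman bound.
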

\inserted{\begin{remark}We have kept the constants in order to track a possible improvement in the inequality, however we see that it could be interpreted under an easier form
\[ \left\|\uin \right\|_{L^2(0,L)}^{2} \lesssim \f{M^2}{\rho^{-1} \left(\f{C M^2}{\int_0^T \abs{u}^2(t,0)\diff t}\right)},\] 
which proves that if $\int_0^T \abs{u}^2(t,0)\diff t\to 0$ then the right-hand side goes to zero at a logarithmic rate with respect to $\int_0^T \abs{u}^2(t,0)\diff t$.\end{remark}
Moreover, the a priori on the $\Hone$ norm for $\uin$ guided our choice for a Tikhonov regularisation strategy, as explained in Section~\ref{subsec:tikhonov}.}
\begin{proof}
We depart from~\eqref{eq:backward-estimate}, that we apply for $t\in [\f{T}{4},\f{3T}{4}]$, take its square and integrate in time: this gives
\[\left\|\t\state(0)\right\|_{\stateSpace}^2 \f{T}{2}\leq \exp \left(\frac{\left(\modelOpShift \t\state(0),\t\state(0)\right)_\stateSpace}{\left\|\t\state(0)\right\|_{\stateSpace}^{2}} \f{3 T}{2}\right)\int_{T/4}^{3T/4}\left\|\t\state(t)\right\|_{\stateSpace}^2\diff t.\]
We recall the links between the norms: 
\begin{align*}\left\|\t\state (t)\right\|_{\stateSpace}^{2}&=\left\|\sol(t,\cdot) e^{\f{\cdot}{\ep}}\right\|_{\inserted{\Ltwo}}^{2}+\f{\ep}{2}\vert \sol (t,0)\vert^2,
\\
\left(\modelOpShift \t\state(t),\t\state(t)\right)_\stateSpace &= \f{b\ep}{2} \left\|\p_x(\sol (t,\cdot)e^{\f{\cdot}{\ep}})\right\|^2_{\Ltwo} +\f{b}{2\ep}
\left\| \sol(t,\cdot)e^{\f{\cdot}{\ep}} \right\|_{\Ltwo}^2 +\f{b}{\ep}\vert \sol(0)\vert^2
\\
&\leq C b \f{e^{\f{\inserted{2}L}{\ep}}}{\ep} \Vert \sol (t,\cdot)\Vert^2_{H^1}.
\end{align*}
We now apply Theorem~\ref{coroll:Carleman} to estimate
\begin{multline*} \int_{T/4}^{3T/4}\left\|\t\state(t,\cdot)\right\|_{\stateSpace}^2\diff t=
\int_{T/4}^{3T/4}\left (\left\|\sol(t,\cdot) e^{\f{\cdot}{\ep}}\right\|_{\inserted{\Ltwo}}^{2}+\f{\ep}{2}\vert \sol (t,0)\vert^2 \right)\diff t
\\  \lesssim C e^{\f{c}{\ep} (1+T^{-1})} \inserted{\ep^{-2}T^{-5} (1+T)^3} \int_0^T \vert \sol \vert^2 (t,0) \diff t ,
\end{multline*}
hence 
\[ \left\|\t\state(0)\right\|_{\stateSpace}^2 T \lesssim 
C e^{\f{c}{\ep} (1+T^{-1})} \inserted{\ep^{-2}T^{-5} (1+T)^3}  \exp \left(\frac{Ce^{\f{\inserted{2}L}{\ep}}\Vert \sol(0,\cdot) \Vert^2_{H^1}}{\left\|\t\state(0)\right\|_{\stateSpace}^{2}} \f{T}{\ep}\right) \int_0^T \vert \sol \vert^2 (t,0) \diff t.
\]
We now use the increasing function $\rho(x)=xe^x$ applied to $X=\f{Ce^{\f{\inserted{2}L}{\ep}}\Vert \sol(0,\cdot) \Vert^2_{H^1}}{\left\|\t\state(0)\right\|_{\stateSpace}^{2}} \f{T}{\ep},$ so that the previous inequality reads
\[ T \lesssim 
C e^{\f{c}{\ep} (1+T^{-1})} \inserted{e^{-\f{2L}{\ep}}\ep^{-1}T^{-6} (1+T)^3}\f{1}{ \Vert \sol  (0,\cdot)\Vert_{H^1}^2}X \exp (X) \int_0^T \vert \sol \vert^2 (t,0) \diff t,
\]
hence by applying $\rho^{-1}$
\[ \rho^{-1} \left(C T^\inserted{7} \inserted{\ep (1+T)^{-3} e^{\f{2L}{\ep}}}{e^{-\f{c}{\ep} (1+T^{-1})}}\f{ \Vert \sol  (0,\cdot)\Vert_{H^1}^2}{\int_0^T \vert \sol\vert^2 (t,0) \diff t}\right)
 \lesssim \f{Ce^{\f{\inserted{2}L}{\ep}}\Vert \sol  (0,\cdot)\Vert^2_{H^1}}{\left\|\t\state(0)\right\|_{\stateSpace}^{2}} \f{T}{\ep},
\]
so that
\[\left\|\sol(0,\cdot) \right\|_{\inserted{\Ltwo}}^{2}+\f{\ep}{2}\vert \sol (0,0)\vert^2  \lesssim \f{Ce^{\f{\inserted{2}L}{\ep}}\Vert \sol  (0,\cdot)\Vert^2_{H^1}}{
 \rho^{-1} \left(C T^\inserted{7} \inserted{\ep (1+T)^{-3} e^{\f{2L}{\ep}}}{e^{-\f{c}{\ep} (1+T^{-1})}}\f{ \Vert \sol  (0,\cdot)\Vert_{H^1}^2}{\int_0^T \vert \sol\vert^2 (t,0) \diff t}\right)
 } \f{T}{\ep},
\]
and we conclude thanks to the fact that the function $x\mapsto \f{x}{\rho^{-1}(x)}$ is increasing on $(0,\infty)$: we easily compute that $\rho'(x)=e^x(1+x)$, hence $\f{d}{dx}(\rho^{-1})\left(\rho(x)\right)=\inserted{\f{1}{1+x}}>0$ for $x>0$.

\end{proof}

\subsection{An estimate for Tikhonov regularisation}\label{subsec:tikhonov}
We now use the observability result to propose a regularisation strategy. 
The fact that $\Vert\uin\Vert_{H^1}$ appears in~\eqref{estim:observ:final} drives us to choose the following penalized functional:
We minimize 	
	\begin{equation}\label{eq:least-squares functional}
	\criter_{|T} (\uin) := \frac{\alpha}{2M^2} \Vert \uin\Vert_{H^1}^2
	+   \frac{1}{2\delta^2} \int_0^T  \abs{\observ^\delta(t) - \sol_{|\uin}(t,0)}^2  \,  \diff t ,		
	\end{equation}
	where $\delta>0$ represents the level of noise in the observation, $\observ^\delta$ is the noisy observation and $\sol_{|{\uin}}$ the solution to~\eqref{eq:second-order-L} departing from $\uin$ as an initial data. Noticing that $\criter_{|T}$ is a quadratic functional, we look for an estimate $\bar{\sol}^0_{|T}$ of $\uin$ as the unique minimiser of $\criter_{|T}:$
\begin{equation}\label{def:uinestim}
\bar{\sol}^0_{|T}=\argmin_{{\uin}\in \HoneR}  \criter_{|T} (\uin).
\end{equation}
We have the following error estimate. 
\begin{theorem}\label{th:tikhonov}
Let $\uin \in \HoneR$ with $\Vert \uin\Vert_{H^1}\leq  M,$ and $\sol_{|{\uin}}$ the solution to~\eqref{eq:second-order-L} departing from $\uin$ as an initial data. We denote $\observ: t\mapsto \sol_{|{\uin}}(t,0)$. 

Let $\delta>0$. We assume that we observe $\observ^\delta$ such that
\[\Vert \observ - \observ^\delta\Vert_{L^2(0,T)} \leq \delta.
\]
Let $\inserted{\bar{\sol}^0_{|T}}$ defined by~\eqref{def:uinestim}. Then there exist constants $C_1$ and $C_2,$ depending only on the parameters $L,$ $b$, $T$ and $\ep,$ such that
\begin{equation}
\Vert \uin - \bar{\sol}^0_{|T}\Vert_{L^2(0,L)}^2 \leq \f{C_1 M^2}{{\rho^{-1}(C_2 \f{M^2}{\delta^2})}}.
\end{equation}
\end{theorem}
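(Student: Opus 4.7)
The plan is to combine the optimality of $\bar{\sol}^0_{|T}$ with the observability inequality of Theorem~\ref{th:initobserv}, applied to the difference $w := \uin - \bar{\sol}^0_{|T}$, which itself solves~\eqref{eq:second-order-L} by linearity. The Tikhonov term will be used twice: once to enforce an a priori $H^1$ bound on the minimizer (so that the $H^1$ bound required by Theorem~\ref{th:initobserv} holds for $w$), and once to transfer the noise level $\delta$ to the discrepancy in the boundary observation.

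First I would exploit the minimality $\criter_{|T}(\bar{\sol}^0_{|T}) \leq \criter_{|T}(\uin)$. Since $\|\uin\|_{H^1} \leq M$ and $\|\observ^\delta - \observ\|_{L^2(0,T)} \leq \delta$, the right-hand side is bounded by $(\alpha+1)/2$. Reading the two non-negative terms on the left separately, I would obtain
\[
\|\bar{\sol}^0_{|T}\|_{H^1} \leq M\sqrt{\frac{\alpha+1}{\alpha}}, \qquad \|\observ^\delta - \sol_{|\bar{\sol}^0_{|T}}(\cdot,0)\|_{L^2(0,T)} \leq \delta\sqrt{\alpha+1}.
\]
Triangle inequality in $H^1$ then yields $\|w\|_{H^1} \leq C_\alpha M$, and another triangle inequality together with the hypothesis on $\observ^\delta$ yields $\|\sol_{|w}(\cdot,0)\|_{L^2(0,T)} \leq (1+\sqrt{\alpha+1})\,\delta =: C'_\alpha \delta$, since $\sol_{|w} = \sol_{|\uin} - \sol_{|\bar{\sol}^0_{|T}}$ by linearity of~\eqref{eq:second-order-L}.

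Next I would apply Theorem~\ref{th:initobserv} to $w$, using the $H^1$ bound $C_\alpha M$ in place of $M$ in the statement. This gives
\[
\|w\|_{L^2(0,L)}^2 \;\lesssim\; \frac{C\,e^{2L/\ep} (C_\alpha M)^2 \, T/\ep}{\rho^{-1}\!\left(C T^{7} \ep (1+T)^{-3} e^{2L/\ep} e^{-c(1+T^{-1})/\ep} \dfrac{(C_\alpha M)^2}{(C'_\alpha \delta)^2}\right)}.
\]
Absorbing all constants that depend only on $L$, $b$, $T$, $\ep$ (and the fixed parameter $\alpha$) into the final constants $C_1$ and $C_2$, and using the monotonicity of $\rho^{-1}$ to substitute $(C_\alpha M/C'_\alpha)^2$ by $M^2$ up to renaming the constant inside, yields
\[
\|\uin - \bar{\sol}^0_{|T}\|_{L^2(0,L)}^2 \;\leq\; \frac{C_1 M^2}{\rho^{-1}\!\left(C_2 \dfrac{M^2}{\delta^2}\right)},
\]
which is the claimed estimate.

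The main obstacle is not the Tikhonov computation itself, which is essentially algebraic, but rather making sure that the observability inequality~\eqref{estim:observ:final} is applied in a form compatible with the available bounds: it requires an $H^1$ a priori bound on the function whose initial datum is to be estimated, and the logarithmic dependence through $\rho^{-1}$ must be preserved when substituting constants. Any attempt to choose $\alpha$ depending on $\delta$ (as in classical Tikhonov theory) is unnecessary here because the logarithmic modulus of continuity inherent to $\rho^{-1}$ already dominates the convergence rate; the penalty parameter $\alpha$ only needs to be fixed to ensure a uniform $H^1$ control on the minimizer, which is why it can be absorbed into the constants $C_1, C_2$.
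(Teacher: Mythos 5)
Your proposal is correct and follows essentially the same route as the paper: apply the observability inequality of Theorem~\ref{th:initobserv} to the difference $\uin-\bar{\sol}^0_{|T}$ (which solves~\eqref{eq:second-order-L} by linearity), bound its $H^1$ norm by $C_\alpha M$ via the minimality $\criter_{|T}(\bar{\sol}^0_{|T})\leq\criter_{|T}(\uin)\leq(\alpha+1)/2$ together with the triangle inequality, bound its boundary trace in $L^2(0,T)$ by $C'_\alpha\delta$ the same way, and absorb the $\alpha$-dependent constants using the monotonicity of $\rho^{-1}$ and of $x\mapsto x/\rho^{-1}(x)$. The only cosmetic difference is that you read the two nonnegative terms of the criterion separately, whereas the paper extracts the same two bounds one at a time; the resulting constants are identical up to renaming.
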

\begin{proof}
Let us denote $\tilde{\sol}^0=\uin-\bar{\sol}^0_{|T}$ and  $\t\sol=\sol_{|\tilde{\sol}^0}$ the solution to~\eqref{eq:second-order-L} with $\tilde{\sol}^0$ as an initial condition. We apply~\eqref{estim:observ:final} to $\t\sol$ (and replace $M$ by $\Vert\tilde{\sol}^0\Vert_{H^1}$), and obtain
\[\left\|\t\sol^0 \right\|_{L^2(0,L)}^{2}+{\ep}\vert \t\sol^0(0)\vert^2  \lesssim \f{C'_1\Vert \tilde{\sol}^0\Vert_{H^1}^2}{
 \rho^{-1} \left(C_2' \f{ \Vert\tilde{\sol}^0\Vert_{H^1}^2}{\int_0^T \vert \t\sol\vert^2 (t,0) \diff t}\right)
 } .
\]
By the triangular inequality, we first have 
\[\Vert\tilde{\sol}^0\Vert_{H^1}^2\leq 2\Vert\uin\Vert_{H^1}^2+2 \Vert\bar{\sol}^0_{|T}\Vert_{H^1}^2\leq 2 M^2+ 2\Vert\bar{\sol}^0_{|T}\Vert_{H^1}^2.
\]
To estimate $\Vert \bar{\sol}^0_{|T}\Vert_{H^1}^2,$ we use the fact that it minimises $\criter_{|T}:$ by definition, we have
\begin{multline*}
\Vert\bar{\sol}^0_{|T}\Vert_{H^1}^2 
\leq \f{2M^2}{\alpha}\criter_{|T}(\bar{\sol}^0_{|T})\leq \f{2M^2}{\alpha}\criter_{|T}(\uin)
\\
\leq \f{2M^2}{\alpha}\left(\f{\alpha}{2M^2} \Vert \uin\Vert^2_{H^1}+\f{1}{2}\right)
\leq {M^2 \left(1+\f{1}{\alpha}\right)},
\end{multline*}
hence
\[\Vert\tilde{\sol}^0\Vert_{H^1}^2\leq 2M^2(2+\f{1}{\alpha}),
\]
and \inserted{since we have seen that $x\mapsto \f{x}{\rho^{-1}(x)}$ is increasing, this implies, for adapted constants $C_1$ and $C_2'',$
\[ \Vert \uin - \bar{\sol}^0_{|T}\Vert_{L^2(0,L)}^2 \lesssim \f{C_1M^2}{
 \rho^{-1} \left(C_2'' \f{M^2}{\int_0^T \vert \t\sol\vert^2 (t,0) \diff t}\right)
 } .
\]
To conclude, it only remains to prove that $\int_0^T \vert \t\sol\vert^2 (t,0) \diff t \leq C^{st} \delta^2.$ This comes once again from the minimisation of $\criter_{|T} $ and the triangular inequality: we first have
\begin{multline*}
\int_0^T \vert \t\sol\vert^2 (t,0) \diff t \leq 2 \int_0^T \vert \observ-\observ^\delta\vert^2 (t,0) \diff t  + 2 \int_0^T \vert \observ^\delta - \sol_{|{\bar{\sol}^0_{|T}}}\vert^2 (t,0) \diff t 
\\
\leq 2\delta^2 + 4 \delta^2 \criter_{|T}(\bar{\sol}^0_{|T})
\leq 2\delta^2 + 4 \delta^2 \criter_{|T}(\uin) \leq 2\delta^2 + 4 \delta^2 (\f{\alpha}{2}+\f{1}{2})  \leq C^{st} \delta^2 
\end{multline*}
We conclude by taking $C_2=\f{C_2''}{C^{st}},$ due to the fact that $x\mapsto \f{1}{\rho^{-1}(\f{1}{x})}$ is once again an increasing function.
}

\end{proof}
Let us note that Theorem~\ref{th:tikhonov} holds true for a regularisation term taken in any norm equivalent to the $H^1-$ norm.

\section{An estimation strategy based on Kalman filtering}

We now propose a practical approach for solving the estimation problem using the functional $\criter_T$ based on Kalman filtering. The resulting sequential approach allows to compute $\bar{\adjoint}_{\T}$ as time $T$ increases, illustrating how the observability is modified through time.

Let us define the following observation operator 
\[
	\obsOp : \left|
	\begin{aligned}
		\stateSpace \to \R, \\
		\state = \begin{pmatrix}
			u \\ v
		\end{pmatrix} &\mapsto \sqrt{\frac{2}{\varepsilon}}v 
	\end{aligned} \right.
\]
which is obviously bounded.

The criterion $\criter_T$ can be rewritten for $\initNoise \in \stateSpace$ instead of $\uin \in H^1:$ 
\begin{equation}\label{eq:least-squares-functional-state-space}
	\criter_{|T} (\initNoise) := \frac{\alpha}{2M^2} a_0( \initNoise,\initNoise)
	+   \frac{1}{2\delta^2} \int_0^T  \abs{\observ^\delta(t) - \obsOp \state_{|\initNoise}(t,0)}^2  \,  \diff t ,		
\end{equation}
where $\state_{|\initNoise}$ is the solution of \eqref{eq:semigroup-L} 
\[
\begin{cases}
		\dfrac{\diff}{\diff t} \state + \modelOp \state  = 0, \quad t \in [0,T]\\
	\state(0) = \hat{\state}_0 + \initNoise
\end{cases}
\]
and the symmetric bilinear form
\[
	\forall z = (u,v) \in \mathcal{V},q = (w,k)\in \mathcal{V},\quad a_{0}(z,q) = \int_0^L u'w' \diff x +  vk
\]
 is coercive on $\mathcal{V}$ from the Poincaré inequality and defines a norm for $\initNoise$ which is equivalent to the $H^1$ norm taken for $\uin$ in~\eqref{eq:least-squares functional}. Therefore, we introduce the operator $(N_0,\mathcal{D}(N_0))$ such that 
\[
\begin{cases}
		\mathcal{D}(N_0) = \Bigr\{ z = (u,v) \in \mathcal{V} \text{ such that } \exists \beta = (f,g) \in \stateSpace \text{ such that } \\\hspace{6cm }\forall q = (w,k)\in \mathcal{V},\quad a_{0}(z,q) = (f,q) \Bigl\},\\
		\forall z \in \mathcal{D}(N_0), \quad N_0 \state = \beta \end{cases}
\]
The operator $N_0$ reads
\[
	\forall z = (u,v) \in \mathcal{D}(N_0), \quad N_0 \state = \begin{pmatrix}
		u'' \\
		\sqrt{\frac{\varepsilon}2} u'(0) \\
	\end{pmatrix}.
\]
Since $a_{0}$ is coercive, and the injection from $\mathcal{V}$ into $\stateSpace$ is compact from the Rellich–Kondrachov theorem, we have that $N_0$ is invertible and the symmetric operator $\cov_0 = \frac{M^2}{\delta^2} (\alpha N_0)^{-1} \in \mathcal{S}^+(\stateSpace)$ the space of bounded positive self-adjoint operators.  

Note finally that in the case $\varepsilon = 0$, we can use a classical $\mathrm{L}^2$ regularization instead of $a_0$.

\subsection{The two-ends optimality system}

Let us start by characterizing the unique minimizer $\bar{\initState}_{\T}$ of $\criter_T$ using the adjoint variable associated with the dynamics constraint in the criterion definition.

\begin{theorem}\label{thm:minimum-criterion-generalized}
Defining $\cov_0 =  \frac{M^2}{\delta^2} (\alpha N_0)^{-1}$, the minimizer of $\criter_T$ is characterized by
\begin{equation}\label{eq:optimality-heat}
	\bar{\initState}_{\T} = \cov_0 \bar{\adjoint}_{\T}(0),\end{equation}
where $\bar{\adjoint}_{\T} \in W_T$ and $\bar{\state}_{\T} = \state_{\bar{\initState}_{\T}} \in W_T$  satisfy  
\begin{equation}\label{eq:2end-estimator-heat-generalized}
	\begin{cases}
		\dot{\bar{\state}}_{\T} + \modelOp \bar{\state}_{\T} = 0, &\quad \text{in } (0,T)\\
		\dot{\bar{\adjoint}}_{\T} - \modelOp^\ast \bar{\adjoint}_{\T} = - \obsOp^* (\observ^\delta - \obsOp \bar{\state}_{\T}(t)),&\quad \text{in } (0,T) \\
		\bar{\state}_{\T}(0) = \hat{\state}_0 + \cov_0 \bar{\adjoint}_{\T}(0) \\
		\bar{\adjoint}_{\T}(T) = 0
	\end{cases}	
\end{equation}	
\end{theorem}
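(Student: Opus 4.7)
The plan is to prove the result by a standard variational / adjoint-based optimality argument in three steps: existence-uniqueness, computation of the Gâteaux derivative, and reformulation of the optimality condition by introducing the adjoint state.

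First, I would establish existence and uniqueness of the minimizer $\bar{\initState}_{\T}$. The functional $\criter_{|T}$ is the sum of two convex continuous quadratic terms. The Tikhonov term $\frac{\alpha}{2M^2}a_0(\initNoise,\initNoise)$ is strictly convex and coercive on $\mathcal{V}$ since $a_0$ is coercive (Poincaré-type inequality), while the observation term is nonnegative and continuous with respect to $\initNoise$ thanks to the well-posedness and continuous dependence for \eqref{eq:semigroup-L} together with boundedness of $\obsOp$. Hence $\criter_{|T}$ is strictly convex, continuous and coercive on $\mathcal{V}$, so it admits a unique minimizer $\bar{\initState}_{\T}$.

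Second, I would compute the first variation. For any $h\in \mathcal{V}$, the map $\initNoise\mapsto \state_{|\initNoise}$ is affine, with directional derivative in the direction $h$ given by $\state_{|h}^{\mathrm{lin}}$, the solution of the homogeneous problem $\dot{\state}+\modelOp \state=0$ with initial condition $h$. The optimality condition $D\criter_{|T}(\bar{\initState}_{\T})\cdot h = 0$ for all $h\in\mathcal{V}$ therefore reads
\begin{equation}\label{eq:prop-optim}
	\frac{\alpha}{M^2}\,a_0(\bar{\initState}_{\T},h)
	\;=\;\frac{1}{\delta^2}\int_0^T \bigl(\observ^\delta(t)-\obsOp\bar{\state}_{\T}(t)\bigr)\cdot \obsOp\,\state_{|h}^{\mathrm{lin}}(t)\,\diff t.
\end{equation}

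Third, I would introduce the adjoint state $\bar{\adjoint}_{\T}$ as the unique solution of the backward problem
\[
	\dot{\bar{\adjoint}}_{\T}-\modelOp^\ast\bar{\adjoint}_{\T}=-\obsOp^\ast(\observ^\delta-\obsOp\bar{\state}_{\T}),\quad \bar{\adjoint}_{\T}(T)=0,
\]
whose existence follows from the same semigroup arguments as in Proposition~\ref{prop:second-order:exist}, applied to $\modelOp^\ast$. Testing this equation against $\state_{|h}^{\mathrm{lin}}$ and performing an integration by parts in time in $\stateSpace$ (using $\dot{\state}_{|h}^{\mathrm{lin}}=-\modelOp\state_{|h}^{\mathrm{lin}}$, the terminal condition $\bar{\adjoint}_{\T}(T)=0$, and the defining identity $(\modelOp^\ast\bar{\adjoint}_{\T},\state_{|h}^{\mathrm{lin}})_{\stateSpace}=(\bar{\adjoint}_{\T},\modelOp\state_{|h}^{\mathrm{lin}})_{\stateSpace}$), the right-hand side of \eqref{eq:prop-optim} reduces to $\frac{1}{\delta^2}\bigl(\bar{\adjoint}_{\T}(0),h\bigr)_{\stateSpace}$. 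Consequently the optimality condition becomes $\frac{\alpha}{M^2}a_0(\bar{\initState}_{\T},h)=\frac{1}{\delta^2}(\bar{\adjoint}_{\T}(0),h)_{\stateSpace}$ for all $h\in\mathcal{V}$, which by definition of $N_0$ yields $\frac{\alpha}{M^2}N_0\bar{\initState}_{\T}=\frac{1}{\delta^2}\bar{\adjoint}_{\T}(0)$, i.e. $\bar{\initState}_{\T}=\cov_0\bar{\adjoint}_{\T}(0)$; coupling this with the forward and backward equations gives \eqref{eq:2end-estimator-heat-generalized}.

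The main technical obstacle is justifying the integration by parts: the operator $\modelOp$ involves a nontrivial boundary component (the coupled variable $v=\sqrt{\varepsilon/2}\,u(0)$) which produces boundary terms that must cancel exactly. To handle this rigorously I would argue by density from $\mathcal{D}(\modelOp)$, exploiting the explicit structure of $\modelOp^\ast$ on $\stateSpace=\Ltwo\times\R$ derived from the bilinear form \eqref{eq:blilinear}: the transport and diffusion parts produce boundary contributions at $x=0$ and $x=L$, the homogeneous Dirichlet condition at $x=L$ kills one of them, and the compatibility $\sqrt{\varepsilon/2}\,u(0)=v$ built into $\mathcal{D}(\modelOp)$ combined with the analogous condition on $\mathcal{D}(\modelOp^\ast)$ ensures that the remaining boundary term at $x=0$ coincides exactly with the scalar pairing of the second components, so that no spurious terms survive. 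Once this identification is in place, the derivation above goes through and yields the announced characterization.
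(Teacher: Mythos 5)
Your argument is correct. Note that the paper itself gives no proof of this theorem: it states the two-ends system and appeals to the classical theory of \cite{bensoussan-filtrage-book}, so your derivation supplies exactly the standard argument that the citation stands for. The three steps — strict convexity and coercivity of the quadratic functional on $\mathcal{V}$ for existence and uniqueness, the Gâteaux derivative of the affine-quadratic misfit, and elimination of the linearized trajectory $\state^{\mathrm{lin}}_{|h}$ via the backward adjoint with $\bar{\adjoint}_{\T}(T)=0$ — lead correctly to $\frac{\alpha}{M^2}a_0(\bar{\initState}_{\T},h)=\frac{1}{\delta^2}\bigl(\bar{\adjoint}_{\T}(0),h\bigr)_{\stateSpace}$ for all $h\in\mathcal{V}$, which by the variational definition of $N_0$ is \eqref{eq:optimality-heat}. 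One small remark: the boundary-term cancellation you single out as the main technical obstacle is automatic once you work with the Hilbert-space adjoint abstractly, since $(\modelOp^\ast q,z)_{\stateSpace}=(q,\modelOp z)_{\stateSpace}$ holds by definition for $z\in\mathcal{D}(\modelOp)$ and $q\in\mathcal{D}(\modelOp^\ast)$; the only point genuinely requiring your density argument is the justification that $t\mapsto(\bar{\adjoint}_{\T}(t),\state^{\mathrm{lin}}_{|h}(t))_{\stateSpace}$ is absolutely continuous with the claimed derivative when the solutions are merely mild.
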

The coupled dynamics \eqref{eq:2end-estimator-heat-generalized} classically encountered in such a minimization strategy with a dynamics constraint is often referred to as a ''two-ends'' problem, following \cite{bensoussan-filtrage-book}, since the optimal trajectory is defined forward in time whereas the adjoint variable is defined backward in time from a null final condition.

Moreover, since $\bar{\adjoint}_{\T} \in \mathcal{W}_T$ and the space $\mathcal{W}_T$ has a continuous injection into $\Czero[{[0,T];\stateSpace}]$, we have that $\bar{\adjoint}_{\T}(0) \in \stateSpace$, so $\bar{\state}_{\T} \in \Czero[{[0,T];\stateSpace}]$ from semigroup theory. Hence, we get
$\bar{\adjoint}_{\T} \in \Czero[{[0,T];\stateSpace}]$. 

In practice, our estimation problem can be solved by solving the two-ends problem \eqref{eq:2end-estimator-heat-generalized} as a space-time boundary problem, or by defining an iterative strategy where we solve a sequence of Cauchy problems. Then at each iteration, a forward solution is computed and followed by an adjoint dynamics computation until convergence to the initial condition that minimizes the criterion.

\subsection{The equivalent Kalman observer}

As an alternative to solving \eqref{eq:2end-estimator-heat-generalized}, we propose a sequential strategy based only on the Cauchy formulation of the so-called Kalman estimator.  

To this end, let us first introduce the following Riccati dynamics
\begin{equation}\label{eq:riccati}
\begin{cases}
\dot{\cov} + \modelOp \cov +  \cov \modelOp^\ast +  \cov \obsOp^\ast \obsOp \cov = 0, \quad  t > 0 \\
\cov(0)  = \cov_0
\end{cases}	
\end{equation}
to be solved in the space $\mathcal{S}^+(\stateSpace)$ of bounded positive self-adjoint operator. 

\begin{theorem}\label{thm:existence-riccati-strict}
	Considering $\cov_0 = \frac{M^2}{\delta^2} (\alpha N_0)^{-1}\in \mathcal{S}^+(\stateSpace)$, there exists one and only one strict solution $\cov \in \Cone[{[0,T];\mathcal{S}^+(\stateSpace)}]$ of the Riccati dynamics~\eqref{eq:riccati}.
\end{theorem}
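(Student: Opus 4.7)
The plan is to apply classical operator Riccati theory for a bounded observation operator on a Hilbert state space, as developed for instance in~\cite{bensoussan-daprato-book}. The three required hypotheses are already available: by the argument of Proposition~\ref{prop:second-order:exist} transposed to $[0,L]$ (as sketched in Section~\ref{subsec:expostab}), $-\modelOp$ generates a $C_0$-semigroup of contractions $(\semigroup(t))_{t\geq 0}$ on $\stateSpace$; the observation operator $\obsOp: \stateSpace \to \R$ is bounded by inspection, so $\obsOp^\ast \obsOp \in \mathcal{L}(\stateSpace)$; and $\cov_0 \in \mathcal{S}^+(\stateSpace)$ by coercivity of $a_0$ on $\mathcal{V}$ together with Poincar\'e's inequality.

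First I would establish existence and uniqueness of a mild solution $\cov \in \Czero[{[0,T]; \mathcal{L}(\stateSpace)}]$ to the integral formulation
\[
    \cov(t) = \semigroup(t)\cov_0 \semigroup(t)^\ast - \int_0^t \semigroup(t-s)\cov(s)\obsOp^\ast \obsOp \cov(s) \semigroup(t-s)^\ast \, \diff s,
\]
via a Banach fixed-point argument on a small interval $[0,\tau]$: the contraction property of $\semigroup$ bounds the linear term by $\norm{\cov_0}_{\mathcal{L}(\stateSpace)}$, while the quadratic map $\cov \mapsto \cov \obsOp^\ast \obsOp \cov$ is locally Lipschitz since $\obsOp^\ast \obsOp$ is bounded. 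Extension to $[0,T]$ would follow from the a priori bound $\norm{\cov(t)}_{\mathcal{L}(\stateSpace)} \leq \norm{\cov_0}_{\mathcal{L}(\stateSpace)}$ coming from the dissipative sign of the quadratic term. Self-adjointness is preserved by uniqueness (the adjoint equation coincides with~\eqref{eq:riccati}), and positivity follows either from a monotone iteration starting from the Lyapunov dynamics $\dot{\cov}_L + \modelOp \cov_L + \cov_L \modelOp^\ast = 0$ or from the Kalman--Bucy interpretation of $\cov(t)$ as a minimum-variance covariance operator.

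Second, to upgrade from mild to strict solution, I would exploit the extra regularity of $\cov_0$: by construction $\cov_0 = \frac{M^2}{\delta^2}(\alpha N_0)^{-1}$ maps $\stateSpace$ into $\mathcal{D}(N_0)$, and a direct inspection of~\eqref{eq:operator-domain-L} against the definition of $\mathcal{D}(N_0)$ gives $\mathcal{D}(N_0) \subseteq \mathcal{D}(\modelOp)$. Hence $\cov_0 \stateSpace \subseteq \mathcal{D}(\modelOp)$, so that $t \mapsto \semigroup(t)\cov_0\semigroup(t)^\ast$ is continuously differentiable in $\mathcal{L}(\stateSpace)$ with the expected derivative $-\modelOp \semigroup(t)\cov_0\semigroup(t)^\ast - \semigroup(t)\cov_0\semigroup(t)^\ast \modelOp^\ast$, and a bootstrap on the nonlinear correction then yields $\cov \in \Cone[{[0,T];\mathcal{S}^+(\stateSpace)}]$.

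The main obstacle will be propagating the domain inclusion $\cov(t)\stateSpace \subseteq \mathcal{D}(\modelOp)$ through the nonlinear dynamics, since $\semigroup$ is only a contraction semigroup and does not offer the smoothing effect one would get from an analytic semigroup to compensate for losses of regularity. I expect to handle this by rewriting the Riccati equation as a linear time-varying equation with effective drift $\modelOp + \cov(\cdot)\obsOp^\ast \obsOp$, whose associated evolution family preserves $\mathcal{D}(\modelOp)$ thanks to the bounded perturbation $\cov(\cdot)\obsOp^\ast\obsOp \in \Czero[{[0,T]; \mathcal{L}(\stateSpace)}]$ already obtained at the mild level, and to read off the strict regularity from the corresponding factorized representation of $\cov$.
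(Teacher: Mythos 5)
Your proposal is correct and ultimately rests on the same foundation as the paper's proof, namely the operator Riccati theory of Bensoussan--Da Prato for a bounded observation operator, with the decisive input being the extra regularity of $\cov_0$; the difference is one of packaging. The paper verifies a single hypothesis --- that the bilinear form $b_{\cov_0}(\state,v) = (\cov_0\state,\modelOp^\ast v)_{\stateSpace} + (\cov_0 v,\modelOp^\ast \state)_{\stateSpace}$, a priori defined on $\mathcal{D}(\modelOp^\ast)\times\mathcal{D}(\modelOp^\ast)$, extends continuously to $\stateSpace\times\stateSpace$ --- and then obtains existence and uniqueness of the strict solution in one stroke by citing \cite[IV-1 Proposition 3.2]{bensoussan-daprato-book}. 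Your check that $\cov_0\stateSpace\subseteq\mathcal{D}(N_0)\subseteq\mathcal{D}(\modelOp)$ is the same hypothesis in disguise: elliptic regularity gives $\mathcal{D}(N_0)\subset \Htwo$, hence $\mathcal{D}(N_0)\subseteq\mathcal{D}(\modelOp)$, then $\modelOp\cov_0$ is bounded by the closed graph theorem, and $(\cov_0\state,\modelOp^\ast v)_{\stateSpace}=(\modelOp\cov_0\state,v)_{\stateSpace}$ is exactly the continuous extension the paper needs. What your longer route buys is self-containedness: the fixed-point construction of the mild solution, the positivity argument and the bound $0\leq\cov(t)\leq\semigroup(t)\cov_0\semigroup(t)^\ast$, and the factorized representation through the closed-loop evolution family are precisely the ingredients hidden inside the cited proposition (and that factorization reappears in the paper as the operator $\varLambda$ of~\eqref{eq:riccati-cond-init}). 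Two points to tighten if you write it out in full: the a priori bound $\norm{\cov(t)}_{\mathcal{L}(\stateSpace)}\leq\norm{\cov_0}_{\mathcal{L}(\stateSpace)}$ presupposes $\cov(t)\geq 0$, so positivity must be secured on the local existence interval before globalizing; and the bootstrap for strict regularity must be run in the right order, first extracting the continuity of $t\mapsto\cov(t)\obsOp^\ast\obsOp$ at the mild level so that the perturbed evolution family preserving $\mathcal{D}(\modelOp)$ is well defined.
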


\begin{proof}
	Let us define the bilinear form
\[
	b_{\cov_0} : \left|
	\begin{aligned}
		\mathcal{D}(\modelOp^\ast) \times \mathcal{D}(\modelOp^\ast) &\to \R, \\
		(\state,v) &\mapsto (\cov_0\state,\modelOp^\ast v)_{\stateSpace} + (\cov_0 v,\modelOp^\ast \state)_{\stateSpace},
	\end{aligned} \right.
\]
We can show that $b_{\cov_0}$ can be continuously extended as a bilinear form of $\stateSpace \times \stateSpace$. Therefore, by \cite[IV-1 Proposition 3.2]{bensoussan-daprato-book}, there exists one and only one strict solution $\cov \in \Cone[{[0,T];\mathcal{S}^+(\stateSpace)}]$ of ~\eqref{eq:riccati}. 
\end{proof}

Since, $\cov \in \Cone[{[0,T];\mathcal{S}^+(\stateSpace)}]$, we deduce as a direct application of the bounded perturbation result \cite[Proposition II-1 3.4]{bensoussan-daprato-book} the following existence result.

\begin{theorem}\label{thm:kalman-existence}
	Given $\observ^\delta \in L^2([0,T];\stateSpace)$, there exists one and only one \emph{mild} solution in $C^0([0,T];\stateSpace)$ of the dynamics
	\begin{equation}
	\label{eq:kalman}
	\begin{cases}
		\dot{\hat{\state}} + \modelOp \hat{\state} =  \cov \obsOp^* (\observ^\delta - \obsOp \hat{\state}), \quad \text{ in } (0,T)\\
		\hat{\state}(0) = \hat{\state}_0
	\end{cases}	
	\end{equation}
\end{theorem}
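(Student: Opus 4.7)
The plan is to recast~\eqref{eq:kalman} as an inhomogeneous Cauchy problem with a time-dependent, bounded perturbation of the maximal accretive operator $\modelOp$, and then apply the bounded-perturbation theorem~\cite[Proposition II-1 3.4]{bensoussan-daprato-book} cited in the statement.

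First, I would rewrite the equation under the form
\[
\dot{\hat{\state}} + \bigl(\modelOp + B(t)\bigr)\hat{\state} = f(t), \qquad \hat{\state}(0) = \hat{\state}_0,
\]
with $B(t) \coloneqq \cov(t)\obsOp^\ast \obsOp$ and $f(t) \coloneqq \cov(t)\obsOp^\ast \observ^\delta(t)$. Boundedness of $\obsOp$ (hence of $\obsOp^\ast$), together with $\cov \in \Cone[{[0,T];\mathcal{S}^+(\stateSpace)}]$ from Theorem~\ref{thm:existence-riccati-strict}, entails that $B \in \Czero[{[0,T];\mathcal{L}(\stateSpace)}]$, so that $M_B \coloneqq \sup_{t\in [0,T]} \|B(t)\|_{\mathcal{L}(\stateSpace)}$ is finite, and that $f \in \Ltwo[{[0,T];\stateSpace}]$ since $t\mapsto \cov(t)\obsOp^\ast$ is continuous bounded and $\observ^\delta$ is square integrable in time.

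Second, the maximal accretivity of $\modelOp$ proved in Proposition~\ref{prop:second-order:exist}, adapted to the bounded domain as indicated in Section~\ref{subsec:expostab}, shows via Lumer--Phillips that $-\modelOp$ generates a contraction $C^0$-semigroup $(S(t))_{t\geq 0}$ on $\stateSpace$. A mild solution of~\eqref{eq:kalman} is then by definition a fixed point in $\Czero[{[0,T];\stateSpace}]$ of the Duhamel map
\[
\mathcal{F}[\hat{\state}](t) \coloneqq S(t)\hat{\state}_0 - \int_0^t S(t-s) B(s) \hat{\state}(s)\diff s + \int_0^t S(t-s) f(s)\diff s.
\]

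Finally, I would equip $\Czero[{[0,T];\stateSpace}]$ with the weighted norm $\|\hat{\state}\|_\lambda \coloneqq \sup_{t\in [0,T]} e^{-\lambda t} \|\hat{\state}(t)\|_{\stateSpace}$ and use the contractivity $\|S(t-s)\|_{\mathcal{L}(\stateSpace)} \leq 1$ to derive
\[
\|\mathcal{F}[\hat{\state}_1] - \mathcal{F}[\hat{\state}_2]\|_\lambda \leq \frac{M_B}{\lambda}\|\hat{\state}_1 - \hat{\state}_2\|_\lambda.
\]
Choosing $\lambda > M_B$ turns $\mathcal{F}$ into a strict contraction, and Banach's fixed-point theorem yields existence and uniqueness of a mild solution $\hat{\state} \in \Czero[{[0,T];\stateSpace}]$. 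I do not anticipate any serious obstacle: the whole argument is the textbook bounded-perturbation of a contraction semigroup, precisely what is packaged in the cited proposition; the only care needed is to check the continuity of $B$ in $\mathcal{L}(\stateSpace)$ and the $L^2$ integrability of $f$, which both follow directly from the regularity of $\cov$ provided by Theorem~\ref{thm:existence-riccati-strict}.
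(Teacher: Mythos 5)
Your argument is correct and matches the paper's: the paper simply invokes the bounded-perturbation result of \cite[Proposition II-1 3.4]{bensoussan-daprato-book} using the regularity $\cov \in \Cone[{[0,T];\mathcal{S}^+(\stateSpace)}]$ from Theorem~\ref{thm:existence-riccati-strict}, and your Duhamel fixed-point construction with the weighted norm is precisely the standard proof of that cited result. Nothing is missing.
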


The two Cauchy problems \eqref{eq:kalman} and \eqref{eq:riccati} defining $\hat{\state}$ and $\cov$ are fundamental to decouple the two-ends problems \eqref{eq:optimality-heat} of solution $(\bar{\state}_{\T},\bar{\adjoint}_{\T})$ as stated by the next theorem, see a proof in \cite[Theorem 1.14]{aussal:hal-03911500}.

\begin{theorem}\label{thm:kalman}
	Let $\cov$ be a strong solution of \eqref{eq:riccati} and $\hat{\state}$ the solution of the dynamics \eqref{eq:kalman} defined from $\cov$. We have the fundamental identity
	\begin{equation}
		\label{eq:link-kalman-optimal}
			\forall t \in [0,T], \quad \bar{\state}_{\T}(t) = \hat{\state}(t) + \cov(t)\bar{\adjoint}_{\T}(t).
	\end{equation}
\end{theorem}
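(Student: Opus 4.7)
The plan is the classical decoupling argument: introduce the candidate function
\[
    r(t) \coloneqq \hat{\state}(t) + \cov(t)\bar{\adjoint}_{\T}(t),
\]
and show that $w \coloneqq r - \bar{\state}_{\T}$ satisfies a well-posed Cauchy problem with zero initial data, so that $w \equiv 0$ by uniqueness. The regularity is fine: $\bar{\adjoint}_{\T} \in C^0([0,T];\stateSpace)$ (as noted after Theorem~\ref{thm:minimum-criterion-generalized}), $\cov \in C^1([0,T];\mathcal{S}^+(\stateSpace))$ by Theorem~\ref{thm:existence-riccati-strict}, and $\hat{\state} \in C^0([0,T];\stateSpace)$ by Theorem~\ref{thm:kalman-existence}, so $r$ has at least the regularity of a mild solution of the closed-loop dynamics below.

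First I would check the initial condition. From \eqref{eq:2end-estimator-heat-generalized} one has $\bar{\state}_{\T}(0) = \hat{\state}_0 + \cov_0 \bar{\adjoint}_{\T}(0)$, while $r(0) = \hat{\state}(0) + \cov(0)\bar{\adjoint}_{\T}(0) = \hat{\state}_0 + \cov_0 \bar{\adjoint}_{\T}(0)$, hence $w(0)=0$. Next, I would formally differentiate $r$ and substitute the three dynamics in turn: the Kalman dynamics \eqref{eq:kalman} for $\dot{\hat{\state}}$, the Riccati equation \eqref{eq:riccati} for $\dot{\cov}$, and the adjoint equation in \eqref{eq:2end-estimator-heat-generalized} for $\dot{\bar{\adjoint}}_{\T}$. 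This gives, after cancellation of the two pairs $\pm\cov \modelOp^\ast \bar{\adjoint}_{\T}$ and $\pm\cov \obsOp^\ast \observ^\delta$,
\[
    \dot{r} + \modelOp r = -\cov\obsOp^\ast\obsOp\,(\hat{\state} + \cov \bar{\adjoint}_{\T}) + \cov\obsOp^\ast\obsOp\,\bar{\state}_{\T} = \cov\obsOp^\ast\obsOp\,(\bar{\state}_{\T}-r).
\]
Subtracting the forward equation $\dot{\bar{\state}}_{\T} + \modelOp \bar{\state}_{\T}=0$ yields
\[
    \dot{w} + \bigl(\modelOp + \cov \obsOp^\ast \obsOp\bigr) w = 0, \quad w(0)=0.
\]

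The main (mild) technical obstacle is to make the formal differentiation of $r$ rigorous, since $\bar{\adjoint}_{\T}$ is only a mild solution of the adjoint equation and $\hat{\state}$ only a mild solution of \eqref{eq:kalman}. The standard way around this, which I would follow, is to first establish the identity for an initial condition $\hat{\state}_0 + \cov_0\bar{\adjoint}_{\T}(0) \in \mathcal{D}(\modelOp)$ and a smoothed observation $\observ^\delta \in C^1([0,T])$, where all the solutions are strict and the chain rule applies directly. Once the identity is proved on this dense subset, I extend it to the general case by continuity, using that both sides depend continuously on $(\hat{\state}_0,\observ^\delta) \in \stateSpace \times L^2([0,T])$: the Kalman dynamics is a bounded perturbation of $-\modelOp$, the adjoint dynamics depends continuously on $\bar{\state}_{\T}$ which itself depends continuously on $\bar{\initState}_{\T}=\cov_0\bar{\adjoint}_{\T}(0)$, and $\cov$ is fixed and bounded.

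Finally, since $\modelOp + \cov \obsOp^\ast \obsOp$ generates a strongly continuous semigroup on $\stateSpace$ (bounded perturbation of $-\modelOp$, which is $m$-accretive by Proposition~\ref{prop:diffusion-energy-estimate} adapted to the bounded case, and $\cov\obsOp^\ast\obsOp$ is bounded), the Cauchy problem satisfied by $w$ has $w\equiv 0$ as unique mild solution, which yields the claimed identity \eqref{eq:link-kalman-optimal} for every $t\in[0,T]$.
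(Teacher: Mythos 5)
Your proof is correct. The paper does not actually prove Theorem~\ref{thm:kalman} itself but defers to \cite[Theorem 1.14]{aussal:hal-03911500}; what you wrote is precisely the classical decoupling argument that this reference contains: the algebra $\dot{r}+\modelOp r = \cov\obsOp^\ast\obsOp(\bar{\state}_{\T}-r)$ checks out (the terms $\pm\cov\modelOp^\ast\bar{\adjoint}_{\T}$ and $\pm\cov\obsOp^\ast\observ^\delta$ cancel as you say), the initial conditions match, and uniqueness for the boundedly perturbed generator $-(\modelOp+\cov\obsOp^\ast\obsOp)$ closes the argument. Your handling of the regularity issue by a density argument in $(\hat{\state}_0,\observ^\delta)$ is legitimate; note that an alternative, which the paper itself uses in the proof of the analogous identity \eqref{eq:identity-kalman-init}, is to differentiate the scalar quantity $t\mapsto (r(t)-\bar{\state}_{\T}(t),v)_{\stateSpace}$ against a fixed test vector $v$, which avoids the smoothing step by exploiting that the adjoint semigroup applied to $v$ is the only object that needs to be differentiable.
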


In particular we have at time $T$, $\bar{\state}_{\T}(T) = \hat{\state}(T)$ . Moreover, if we consider in the criterion $\criter_{|t}$ the available measurements until $t$, we find that  
\[
	\forall t \in [0,T], \quad \bar{\state}_{t}(t) = \hat{\state}(t)
\]
The solution $\hat{\state}$ is called the Kalman estimator which therefore gives an equivalent alternative to the minimization of the criterion $\criter_{|T}$. 

Our initial objective is to estimate the initial condition, hence to compute $\bar{\initState}_T$ which is not computed by the Kalman estimator $\hat{\state}$. However, injecting \eqref{eq:link-kalman-optimal} in the adjoint dynamics of \eqref{eq:2end-estimator-heat-generalized}, we get
\begin{equation}\label{eq:adjoint-dynamics-from-kalman}
	\begin{cases}
		\dot{\bar{\adjoint}}_{\T} - \big(\modelOp^\ast +  \obsOp^\ast \obsOp \cov(t)\big) \bar{\adjoint}_{\T} = -  \obsOp^* \big(\observ^\delta - \obsOp \hat{\state}\big),\quad t \in (0,T)\\
		\bar{\adjoint}_{\T}(T) = 0
	\end{cases}
\end{equation}
with still $\bar{\initState}_T = \cov_0 \bar{\adjoint}_{\T}(0)$. Note that \eqref{eq:adjoint-dynamics-from-kalman} is well-posed for the same reason that \eqref{thm:kalman} is well-posed. Therefore, a backward dynamics allows to reconstruct the initial condition in only one iteration. Unfortunately, the dynamics \eqref{eq:adjoint-dynamics-from-kalman} is not of practical use as it should be computed backward but the Riccati solution $\cov$ remains to be computed forward in time.

As an alternative, we introduce the evolution operator $\varLambda \in C^0([0,T];\mathcal{L}(\stateSpace))$ such that $r : t \mapsto \varLambda(t) r_0$ is solution of  
\begin{equation}\label{eq:riccati-cond-init}
\begin{cases}
\dot{r} + (\modelOp  +  \cov \obsOp^* \obsOp) r  = 0\quad \text{ in } (0,T)\\
r(0) = \cov_0 r_0
\end{cases}
\end{equation}
For all $r_0 \in \stateSpace$, we remark that $r(t) = \varLambda(t)r_0$ is solution of \eqref{eq:kalman} with $r(0) \in \mathcal{D}(\modelOp)$ and $\observ = 0$. Hence $r \in \Cone[{[0,T];\stateSpace}] \cap \Czero[{[0,T];\mathcal{D}(\modelOp)}]$. Then, we introduce the estimator 
\begin{equation}
\begin{cases}
\dot{\hat{\initState}} =  \varLambda^\ast \obsOp^\ast (\observ^\delta - \obsOp \hat{\state}), \quad \text{ in } (0,T)\\
\hat{\initState}(0) = 0
\end{cases}
\end{equation}
which admits a weak solution in $\Ltwo[{(0,T);\stateSpace)}]$  as $t \mapsto \varLambda^\ast(t) \obsOp^* (\observ^\delta(t) - \obsOp \hat{\state}(t)) \in \Ltwo[{(0,T);\stateSpace)}]$. This new operator and dynamics allow to reconstruct the initial condition as stated in the next proposition. In other words, by solving one additional forward Riccati equation and one additional forward dynamics, we will be able to directly estimate $\bar{\initState}_{\T}$.
\begin{proposition}\label{eq:}
	For all $T\geq 0$, we have the following identity 
\begin{equation}\label{eq:identity-kalman-init}
	\forall t \in [0,T],\quad \bar{\initState}_{\T} =  \hat{\initState}(t) 	+ \varLambda^\ast(t) \bar{\adjoint}_{\T}(t).
\end{equation}
Hence $\hat{\initState}$ is a Kalman estimator of the initial condition in the following sense: 
\begin{equation}
	\forall t \geq 0,\quad  \hat{\initState}(t) = \bar{\initState}_{t}.	
\end{equation}
\end{proposition}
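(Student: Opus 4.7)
The plan is to prove \eqref{eq:identity-kalman-init} by showing that the map $t\mapsto \hat{\initState}(t)+\varLambda^\ast(t)\bar{\adjoint}_{\T}(t)$ is constant on $[0,T]$, then identify its value at $t=0$. The ``Kalman estimator'' interpretation then follows by evaluating at $t=T$ combined with the terminal condition $\bar{\adjoint}_T(T)=0$, after noting that the Cauchy problems defining $\hat{\state}$, $\cov$ and $\varLambda$ do not depend on $T$ but only on the observations available on $[0,t]$.

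First I would fix the value at $t=0$. By definition $\hat{\initState}(0)=0$, while $\varLambda(0)=\cov_0$ (since $r(0)=\cov_0 r_0 = \varLambda(0)r_0$ for all $r_0\in\stateSpace$), and $\cov_0$ is self-adjoint, so $\varLambda^\ast(0)=\cov_0$. Combining with Theorem~\ref{thm:minimum-criterion-generalized}, we obtain
\[
\hat{\initState}(0)+\varLambda^\ast(0)\bar{\adjoint}_{\T}(0)=\cov_0\bar{\adjoint}_{\T}(0)=\bar{\initState}_{\T}.
\]

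Next I would differentiate the candidate quantity. From \eqref{eq:riccati-cond-init}, as an operator-valued identity we have $\dot\varLambda(t)=-(\modelOp+\cov(t)\obsOp^\ast\obsOp)\varLambda(t)$, hence by taking adjoints $\dot\varLambda^\ast(t)=-\varLambda^\ast(t)(\modelOp^\ast+\obsOp^\ast\obsOp\,\cov(t))$. Plugging in the adjoint dynamics \eqref{eq:adjoint-dynamics-from-kalman}, we get
\[
\dot\varLambda^\ast(t)\bar{\adjoint}_{\T}(t)+\varLambda^\ast(t)\dot{\bar{\adjoint}}_{\T}(t)=-\varLambda^\ast(t)\obsOp^\ast(\observ^\delta-\obsOp\hat{\state}(t)),
\]
because the two terms involving $(\modelOp^\ast+\obsOp^\ast\obsOp\,\cov)$ cancel. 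Since the definition of $\hat\initState$ gives precisely $\dot{\hat\initState}(t)=\varLambda^\ast(t)\obsOp^\ast(\observ^\delta-\obsOp\hat{\state}(t))$, the three contributions sum to zero, proving that $t\mapsto\hat{\initState}(t)+\varLambda^\ast(t)\bar{\adjoint}_{\T}(t)$ is constant and therefore equal to $\bar{\initState}_{\T}$ throughout $[0,T]$.

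Finally, I would derive the Kalman property. Setting $t=T$ in \eqref{eq:identity-kalman-init} together with $\bar{\adjoint}_{\T}(T)=0$ yields $\hat{\initState}(T)=\bar{\initState}_{\T}$. Since both $(\hat{\state},\cov,\varLambda)$ and their governing equations depend only on $[0,T]$, the same argument applied on $[0,t]$ for any $t\geq 0$ gives $\hat{\initState}(t)=\bar{\initState}_{t}$.

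The main obstacle will be the regularity required to justify the formal differentiation of $\varLambda^\ast(t)\bar{\adjoint}_{\T}(t)$: although $\varLambda\in\mathrm{C}^0([0,T];\mathcal{L}(\stateSpace))$ and $\bar{\adjoint}_{\T}$ is only a mild/weak solution of \eqref{eq:adjoint-dynamics-from-kalman}, one obtains a rigorous identity by first working with smooth data (e.g. $\observ^\delta$ regular and initial conditions in $\mathcal{D}(\modelOp)$ so that $\bar{\adjoint}_{\T}$ is a strict solution and $\varLambda$ is differentiable in a strong sense), establishing the constancy there, and then passing to the limit by density and continuous dependence of all involved mappings on the data. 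This technical step is standard in Kalman filter theory (see e.g.\ \cite{bensoussan-daprato-book}) but must be invoked carefully given the unbounded generator $\modelOp$ and the boundary-type observation operator $\obsOp$.
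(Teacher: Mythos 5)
Your proposal is correct and follows essentially the same route as the paper: both show that $t\mapsto\hat{\initState}(t)+\varLambda^\ast(t)\bar{\adjoint}_{\T}(t)$ is constant by differentiating and cancelling the $(\modelOp^\ast+\obsOp^\ast\obsOp\cov)$ terms coming from the $\varLambda$-dynamics and the adjoint dynamics against the source term of $\dot{\hat{\initState}}$, then identify the constant at $t=0$ via $\varLambda^\ast(0)=\cov_0$ and conclude at $t=T$ using $\bar{\adjoint}_{\T}(T)=0$. The only cosmetic difference is that the paper performs the differentiation weakly, testing $(\eta,v)_{\stateSpace}$ against $v\in\stateSpace$ and exploiting that $\varLambda^\ast v$ and $\bar{\adjoint}_{\T}$ are in $\Cone[{[0,T];\stateSpace}]$, which is precisely the regularity device you anticipate needing.
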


\begin{proof}
	We denote $\eta\, :\,  t \mapsto \hat{\initState}(t) 	+ \varLambda^\ast(t) \bar{\adjoint}_{\T}(t)$, and consider $v \in \stateSpace$. Both $\varLambda^\ast v$ and $\bar{\adjoint}_{\T}$ belong to $\Cone[{([0,T];\stateSpace)}]$, hence we can compute	
	\[
	\begin{split}
		\frac{\diff}{\diff t} (\eta,v)_{\stateSpace} &= \frac{\diff}{\diff t} \bigl(  \hat{\initState}(t), v\bigr)_\stateSpace + \frac{\diff}{\diff t} \bigl( \bar{\adjoint}_{\T}(s), \varLambda^\ast(t) v\bigr)_{\stateSpace}\Big|_{s = t} + \frac{\diff}{\diff t} \bigl(\bar{\adjoint}_{\T}(t), \varLambda^\ast(s) v\bigr)_{\stateSpace}\Big|_{s = t}	\\
		&= \bigl(   (\observ^\delta - \obsOp \hat{\state}), \obsOp \varLambda^\ast(t) v\bigr)_{\stateSpace} - \bigl(\bar{\adjoint}_{\T}(t), (\modelOp - \cov \obsOp^* \obsOp) \varLambda^\ast(t) v \bigr)_{\stateSpace} \\
		&\hspace{1cm} - \bigl(   (\observ^\delta - \obsOp \hat{\state}), \obsOp \varLambda^\ast(t) v\bigr)_{\stateSpace} + \bigl(\bar{\adjoint}_{\T}(t), (\modelOp - \cov \obsOp^* \obsOp) \varLambda^\ast(t) v \bigr)_{\stateSpace}\\
		&= 0.
	\end{split}
	\]
	So $\eta \equiv \eta(0) =  \cov_0 \bar{\adjoint}_{\T}(t) = \bar{\initState}_{\T}$, which justifies \eqref{eq:identity-kalman-init}. Finally $\adjoint_{\T}(T) = 0$  in \eqref{eq:identity-kalman-init} gives $\hat{\initState}(T) = \bar{\initState}_{\T}$. 
\end{proof}

\section{Numerical solution}\label{sec:num}
\subsection{Discretization aspects}

\subsubsection{Model discretization} 

 To illustrate the proposed asymptotic models, we perform a space-time discretization of \eqref{eq:second-order} using finite differences and a forward Euler time-scheme. \inserted{We discretize an interval $[0,L]$ and add a Dirichlet boundary condition on $x=L.$} 
\begin{equation}\label{eq:time-scheme-direct}
			\begin{cases}
			\dfrac{u^\varepsilon_{n+1,i} - u^\varepsilon_{n,i}}{\Delta t} - b \dfrac{u^\varepsilon_{n,i+1} - u^\varepsilon_{n,i}}{h} \\[0.2cm] 
			\hspace{3cm}- \dfrac{b\varepsilon}2 \dfrac{u^\varepsilon_{n,i+1} -  2u^\varepsilon_{n,i} + u^\varepsilon_{n,i-1}}{h^2} = 0 , \quad 1\leq i \leq n \\[0.2cm]
						\dfrac{u^\varepsilon_{n+1,0} - u^\varepsilon_{n,0}}{\Delta t} - b \dfrac{u^\varepsilon_{n,1} - u^\varepsilon_{n,0}}{h} = 0 \\
			u^\varepsilon_{n,N} = 0
		\end{cases}
\end{equation}
Note that if $\Delta t b h^{-1} + b \varepsilon h^{-2} \leq 1$, for all $n \in \N$ and all $1\leq i\leq N$, $u^\varepsilon_{n+1,i}$ is a convex combination
\[
	u^\varepsilon_{n+1,i} = \sum_{j=0}^N \lambda_j u^\varepsilon_{n,j}, \text{ with }  0 \leq \lambda_j \leq 1 \text{ and } \sum_{j=0}^N \lambda_j =1.
\]
In this case the scheme is stable under the CFL condition $\Delta t  \leq b^{-1}(1 + \varepsilon h^{-1})^{-1} h$.  Furthermore, this scheme satisfies a maximum principle, namely
\[
	\forall n \in \N, \forall 1\leq i \leq N,\quad \min_{0\leq i \leq N} (u_0,i) \leq u_{n,i} \leq \max_{0\leq i \leq N} (u_0,i).
\]
Additionally, the transport model $\varepsilon=0$ can be simulated without additional numerical dissipation by simply choosing $b \Delta t = h$.

\inserted{
Similarly, the Becker-Döring model is discretized using a forward-Euler time-discretization, so that we have
\begin{equation}\label{eq:discrete-bd}
	\frac{C_i^{n+1} - C_i^n}{\Delta \tau} = b(C_{i+1}^{n} - C_i^{n}),
\end{equation}
which ensure, for the same reasons, that $C_i^n \leq 0$ for all $i \geq i_0$ and all $n \geq 0$.

}

Note finally that the resulting discretization scheme can all be rewritten in the following abstract form
\begin{equation}
\label{eq:dynsys-linear-ode-discrete}
	\begin{cases}
	\mathrm{\state}_{n+1}(t) =  \Upphi_n \mathrm{\state}_{n},\quad t > 0,\\ 
	\state(0) = \hat{\mathrm{\state}}_{0} + \upzeta
\end{cases}
\end{equation}
where typically for the asymptotics, $\mathrm{\state} = (u_0 \ldots u_N)^\intercal$ and 
\[
	\Upphi_n = \mathbbm{I}\mathrm{d}_{N+1} - \frac{b\Delta t}h \begin{pmatrix}
		1 & -1 &  & \\
		& \ddots & \ddots & \\
		& & \ddots & -1 \\
		& &  & 1
	\end{pmatrix} - \frac{b\varepsilon\Delta t}{2 h^2} 
	\begin{pmatrix}
		0 & 0 &  & &\\
		-1 & 2 & -1 & &\\
		& \ddots & \ddots & \ddots &\\
		& & \ddots & \ddots & -1 \\
		& & & -1 & 2
	\end{pmatrix},
\]
We assumed that the observations are interpolated on the model time-grid leading to the available sequence $(\mathrm{\observ}_n^\delta)_{0 \leq n \leq\N_{\T} }$. The observation operator associated with a boundary observation is given by $\obsOpDof = (1 \, 0 \cdots 0)$.

\subsubsection{Discretizing the Kalman estimators}
We here recall that the discretization of the time-continuous Kalman estimator \eqref{eq:kalman} can be performed using the discrete-time Kalman estimator: 
\begin{equation}\label{eq:kalman-time-discrete}
\begin{cases}
\text{Initialization:}\\
		\hat{\stateDof}_{0}^- = \hat{\stateDof}_{0} \text{ and } \hat{\upzeta}_{0}^- = 0,\\[0.1cm]
		\covDof_{0}^{-} = \covDof_{0} \text{ and } \Lambda_{0}^{-} = \covDof_{0}, \\[0.2cm]
\text{Correction ($n \geq 0$):}\\
		\mathrm{K}_n = \Delta t \covDof_{n}^{-} \obsOpDof^\intercal (\Delta t \obsOpDof \covDof_{n}^{-} \obsOpDof^\intercal +   \Id)^{-1}, \\[0.1cm] 
		\mathrm{L}_n = \Delta t (\Lambda_{n}^{-})^\intercal \obsOpDof^\intercal ( \Delta t \obsOpDof \covDof_{n}^{-} \obsOpDof^\intercal +   \Id)^{-1}, \\[0.1cm]
		\hat{\stateDof}_{n}^{+} = \hat{\stateDof}_{n}^{-} + \mathrm{K}_n (\mathrm{\observ}_n^\delta - \obsOpDof \stateDof_n ), \\[0.1cm]
		\hat{\upzeta}_{n}^+ = \hat{\upzeta}_{n}^{-} + \mathrm{L}_n (\mathrm{\observ}_n^\delta - \obsOpDof \stateDof_n ),  \\[0.1cm]
		\covDof_{n}^{+} = \covDof_{n}^{-} - \mathrm{K}_n \obsOpDof \covDof_{n}^{-}, \\[0.1cm]
		\Lambda_{n}^+ = \Lambda_{n}^{-} - \mathrm{K}_n \obsOpDof \Lambda_{n}^{-}, \\[0.2cm]
\text{Prediction ($n \geq 0$):}\\
		\hat{\state}_{n+1}^{-} = \Upphi_n \hat{\state}_{n}^+, \\[0.1cm]
		\hat{\upzeta}_{n+1}^{-} = \hat{\upzeta}_{n}^{+}, \\[0.1cm]
		\covDof_{n+1}^{-} = \Upphi_n \covDof_{n}^{+} \Upphi_n^\intercal, \\[0.1cm]
		\Lambda_{n+1}^- = \Upphi_n \Lambda_{n}^+.
\end{cases}
\end{equation}
In fact as in the time-continuous infinite dimensional setting, the estimator \eqref{eq:kalman-time-discrete} can be associated with the minimization of a discretization of the criterion \eqref{eq:least-squares-functional-state-space}, namely 
\[
	\criter_{n}^{\Delta t-}(\upzeta) 
		 = \frac{1}2 \bigr(\upzeta,\covDof_0^{-1} \initNoise^h\bigl)_{\stateSpace}  
		 + \frac{1}2 \sum_{k=0}^{n-1} \Delta t \delta^{-2} |\observ_k - \obsOpDof \stateDof_{k|\upzeta}|^2,  
\]
and we refer to \cite{moireau:hal-03921465} for further comment on the time-discretization aspects of the Kalman filter. 

\subsection{Numerical illustrations}

\subsubsection{Experiments with direct models} As a first illustration, we consider the following configuration. We simulate \eqref{eq:depol-ode} with $b =1$, during the time-period $\mathcal{T} = 100$, and with polymers of size $i\in[2,100]$ (namely $i_0 = 2$).  We follow the discretization \eqref{eq:discrete-bd} with an \emph{overkill} time-step $\Delta \tau = 10^{-3}$ to limit the impact of the chosen discretization. By comparison, we simulate \eqref{eq:first-order} and \eqref{eq:second-order} on the space domain $[0,L]$ with $L = 1$ with \eqref{eq:time-scheme-direct} using two values $\vareps = \{0,10^{-2}$\}. The spatial discretization is set to $h=0.1$ for the case $\vareps = 0$ and  $h=0.2$ for the case $\vareps = 10^{-2}$. The final time is $T = 1$ and the time-step is chosen as $\Delta t = 10^{-2}$ which provides for $\epsilon = 0$ an exactly conservative time-scheme, and ensures that the CFL condition is fulfilled for $\vareps = 10^{-2}$. All these parameters are also reported Table~\ref{tab:param-direct}. The initial condition is a truncated Gaussian function centered in $i=50$ of variance $50$ and translated and scaled so that $u_0(L) = 0$ and $\sup_{x\in[0,L]} u_0(x) = 1$.

\begin{table}[h]
	\begin{tabular}{ |p{2.5cm}||p{2.5cm}|p{2cm}|p{3cm}|  }
	\hline
	\multicolumn{4}{|c|}{Physics} \\
	\hline
	Parameters & Becker-Döring (BD) & Transport (Tr) & Advection-diffusion (AD)\\
	\hline
	$N_{\text{poly}}$ & 100 & - & - \\
	$i_0$ & 1 & - & -\\
	$L$ & - & 1 & 1 \\
	$T$ & 100 & 1 & 1 \\
	$b$ & 1 & 1 & 1 \\
	$\varepsilon$ & - & 0 & $10^{-2}$ \\
	\hline
	\multicolumn{4}{|c|}{Discretization} \\
	\hline
	$\Delta t$ & $10^{-3}$ & $10^{-2}$ & $5.10^{-3}$ \\
	$h$ & - & $10^{-2}$ & $10^{-2}$ \\
	\hline
	\end{tabular}
	\caption{Direct models and associated parameters}
	\label{tab:param-direct}
\end{table}

The resulting simulations are compared in Figure~\ref{fig:direct-problem-frames}, where we rescale the asymptotic simulations according to \eqref{def:ueps} by plotting $([2,100],[0,T]) \ni (i,t) \mapsto u^\varepsilon(\varepsilon i,\varepsilon t)$. We also plot in Figure~\ref{fig:moment-rescaled} the resulting moment $\mu_k , k=\{0,1,2\}$ calculated with each model. This provides information about how accurate the moments reconstructed from the asymptotic models are, and thus also about the type of minimum bias that should be expected when using an asymptotic model with an observation generated by the Becker-Döring model.

\begin{figure}[h]
	\includegraphics[width=13cm]{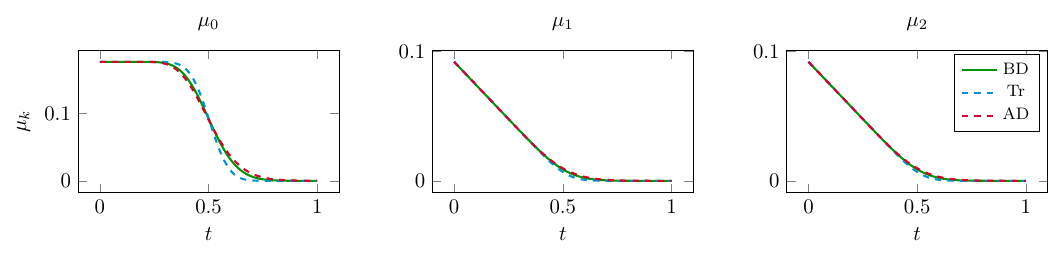}
	\caption{Moments computed from the initial Becker-Döring model (BD, green), the Transport model (Tr, dashed-blue), or the Advection-Diffusion model (AD, dashed-red)}
	\label{fig:moment-rescaled}
\end{figure}

\begin{figure}[p]
	\includegraphics{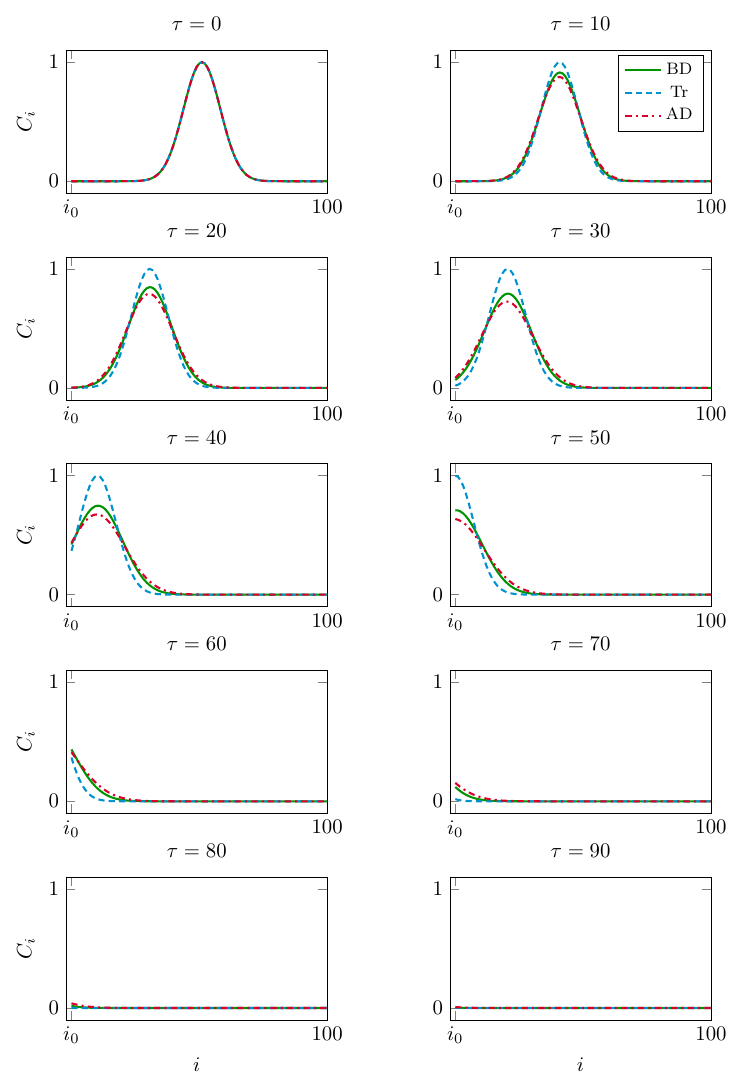}
	\caption{Snapshots of the resulting depolymerization obtained using the initial Becker-Döring model (BD, green), the Transport model (Tr, dashed-blue), or the Advection-Diffusion model (AD, dashed-red)}
	\label{fig:direct-problem-frames}
\end{figure}

\begin{figure}[p]
	\includegraphics{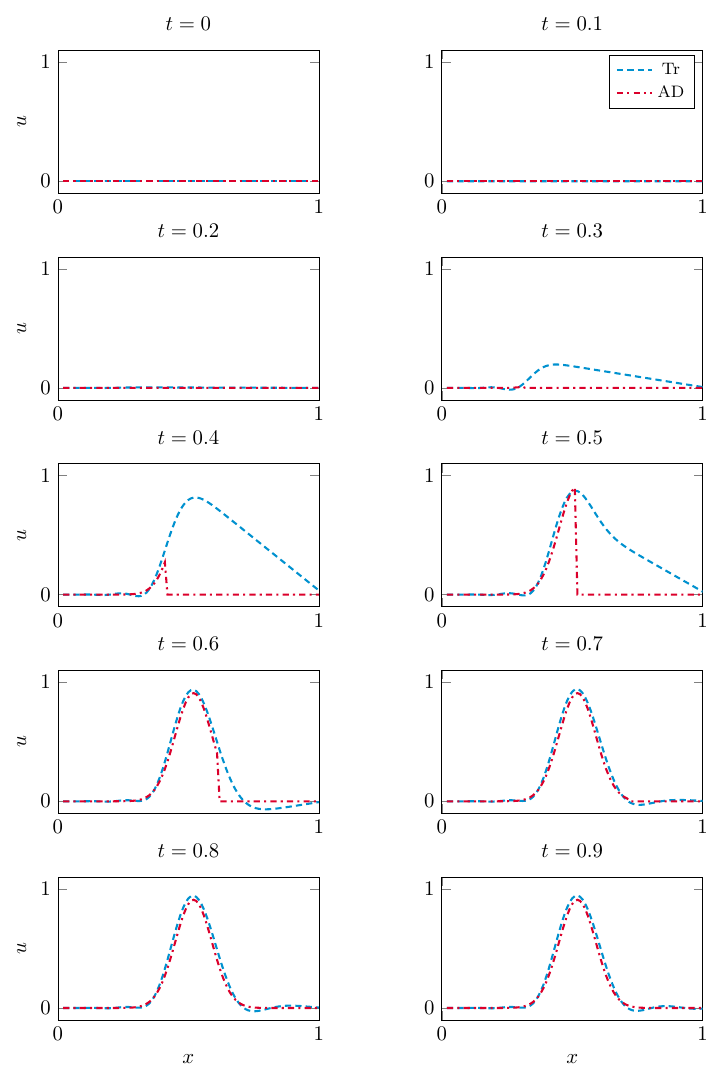}
	\caption{Snapshots of the identity approximation \eqref{eq:involution} for the Transport model (Tr, dashed-blue), or the Advection-Diffusion model (AD, dashed-red)}
	\label{fig:estimation-from-boundary-frames}
\end{figure}

\subsubsection{Boundary observability assessment} A second series of numerical results illustrates the evolution with respect to time of the observability associated with our Carleman estimates. Here, we propose for the two asymptotic models, and for our previously defined Gaussian initial condition $u_0$, to compute
\begin{equation}\label{eq:involution}
	t \mapsto \left[\Pi_0^{-1} + (\Psi_{t;\varepsilon,h,\Delta t}^{u_0|Tr})^\ast\Psi_{t;\varepsilon,h,\Delta t}^{u_0|Tr}\right]^{-1} (\Psi_t^{u_0|Tr})^\ast \Psi_{t;\varepsilon,h,\Delta t}^{u_0|Tr} u_0.	
\end{equation}
Moreover, here $M=\delta=1$, while $\alpha = 10^-4$, so that $\Pi_0^{-1}$ has a ``vanishing regularization effect''. Therefore, the operator defined by \eqref{eq:involution} should be an increasingly accurate approximation of the identity as time increases. Moreover, the reconstruction indicates which part of the initial condition is observable. It is striking that for the advection-diffusion model, the solution can be reconstructed for early times, which illustrates the observability at small times. However, the quality of the reconstruction is clearly affected by the fact that the observability is weak for early time. In contrast, for the transport problem, the part of the solution that can be reconstructed at each time corresponds to the part that would have been transported to the boundary. But then, the reconstruction for this part is very good, since the stability estimate for the transport equation indicates a well-posed inversion for the observable part.

\subsubsection{Robustness of boundary measurement inversion with respect to measurement errors} We then illustrate the reconstruction of the initial condition reconstruction from boundary observations generated by the Becker-Döring model. Figure~\ref{fig:observation}-(Left) shows the rescaled boundary condition generated by the Becker-Döring model compared to the boundary observations generated from a corresponding initial condition with the two asymptotic models. This again illustrates the degree of divergence between the three models from the point of view of boundary observations. The reconstruction is presented in Figure~\ref{fig:estimation-from-BD-frames} with $M=1$, $\delta = 0.1$ and $\alpha = 1$. We additionally contaminate the Becker-Döring boundary observation with an additive Gaussian error corresponding to $10\%$ to the signal maximum and display the reconstruction in Figure~\ref{fig:estimation-from-noisy-BD-frames}. Here, we can clearly see that the transport model in Figure~\ref{fig:estimation-from-BD-frames} provides a less accurate reconstruction, as there is no diffusion in the transport model. However, the reconstruction for the transport model appears to be more robust to noisy observations in Figure~\ref{fig:estimation-from-noisy-BD-frames}.

\begin{figure}[h]
	\includegraphics[width=6cm]{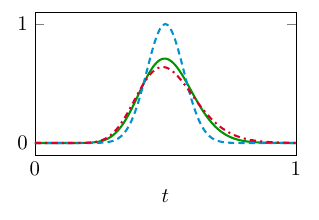}
	\includegraphics[width=5cm]{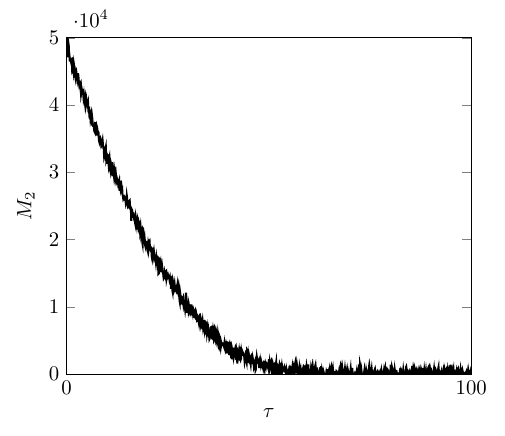}
	\caption{(Left) Boundary observation generated by the Becker-Döring model (BD, green), the Transport model (Tr, dashed-blue), or the Advection-Diffusion model (AD, dashed-red). (Right) Second moment observation from Becker-Döring model with additive noise.}
	\label{fig:observation}
\end{figure}

\begin{figure}[htbp]
	\includegraphics{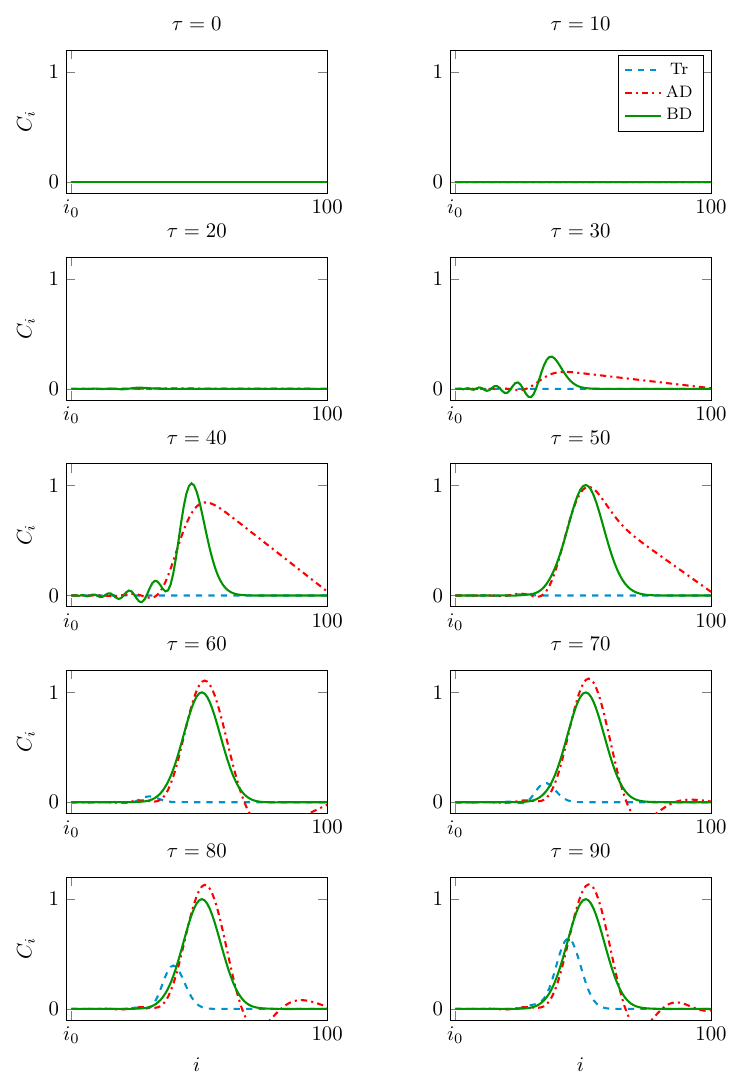}
	\caption{Snapshots of the reconstruction from a boundary observation generated from the Becker-Döring model using the Becker-Döring model (BD, green), the Transport model (Tr, dashed-blue), or the Advection-Diffusion model (AD, dashed-red)}
	\label{fig:estimation-from-BD-frames}
\end{figure}

\begin{figure}[htbp]
	\includegraphics{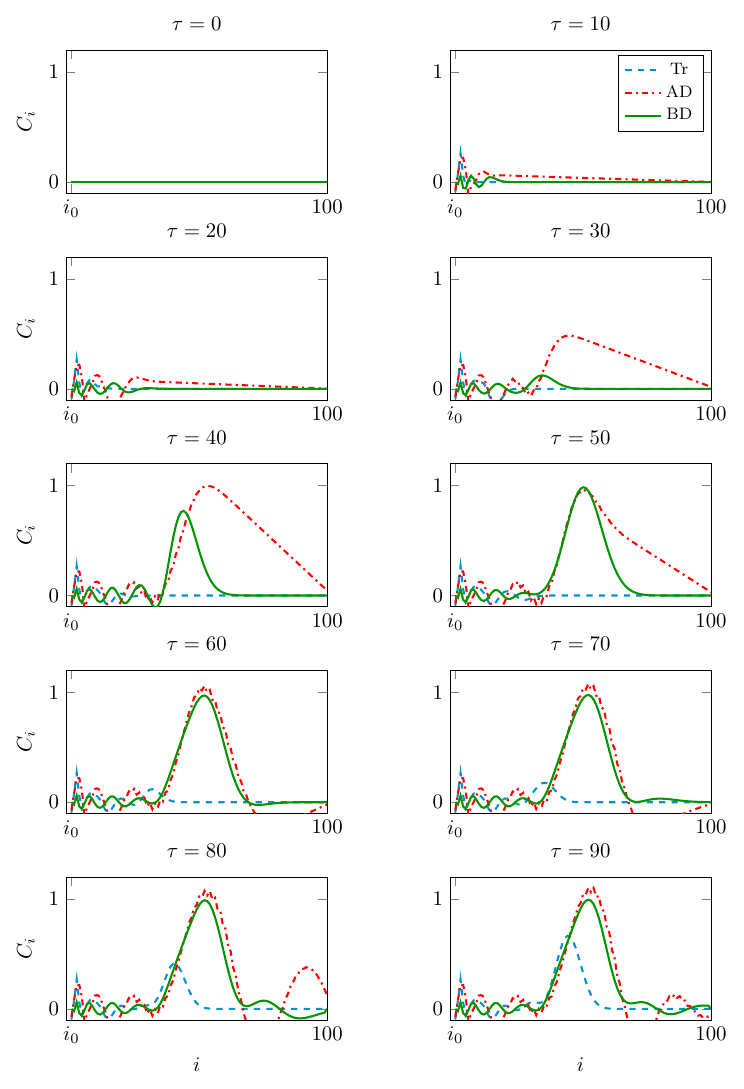}
	\caption{Snapshots of the reconstruction from a noisy boundary observation generated from the Becker-Döring model using the Becker-Döring model (BD, green), the Transport model (Tr, dashed-blue), or the Advection-Diffusion model (AD, dashed-red)}
	\label{fig:estimation-from-noisy-BD-frames}
\end{figure}

\subsubsection{Inversion of a second moment in a synthetic but realistic configuration} Finally, we present a typical reconstruction in a synthetic but realistic configuration, where we have a noisy observation of the second moment at our disposal, as it was considered in biological experimenters in \cite{armiento:2016}. We corrupt the second moment computed from the Becker-Döring model by a white noise of amplitude $1\%$ of the maximum signal value. In addition, the negative value of the resulting signal is deleted to obtain a more realistic noise for such an observation, see Figure~\ref{fig:observation}. The resulting reconstruction is shown in Figure~\ref{fig:estimation-from-noisy-moment-frames} with an initial covariance $M=1$, $\delta = 0.01$ and $\alpha = 10^-1$ for the transport case and $1$ for the advection diffusion case. The initial condition prior is changed here to a Gaussian variable centered in $x=1/3$. This was inspired by the realistic cases in \cite{armiento:2016}, where biologists defined a plausible initial condition, albeit not well centered. Here, we see the impact of manipulating the second moment instead of a boundary condition on the overall reconstruction. Even using the exact Becker-Döring model that generated the observation leads to an oscillatory reconstruction. Again, the advection-diffusion model captures the exact peak value better, but at the cost of a more fluctuating reconstruction.

\begin{figure}[htbp]
	\includegraphics{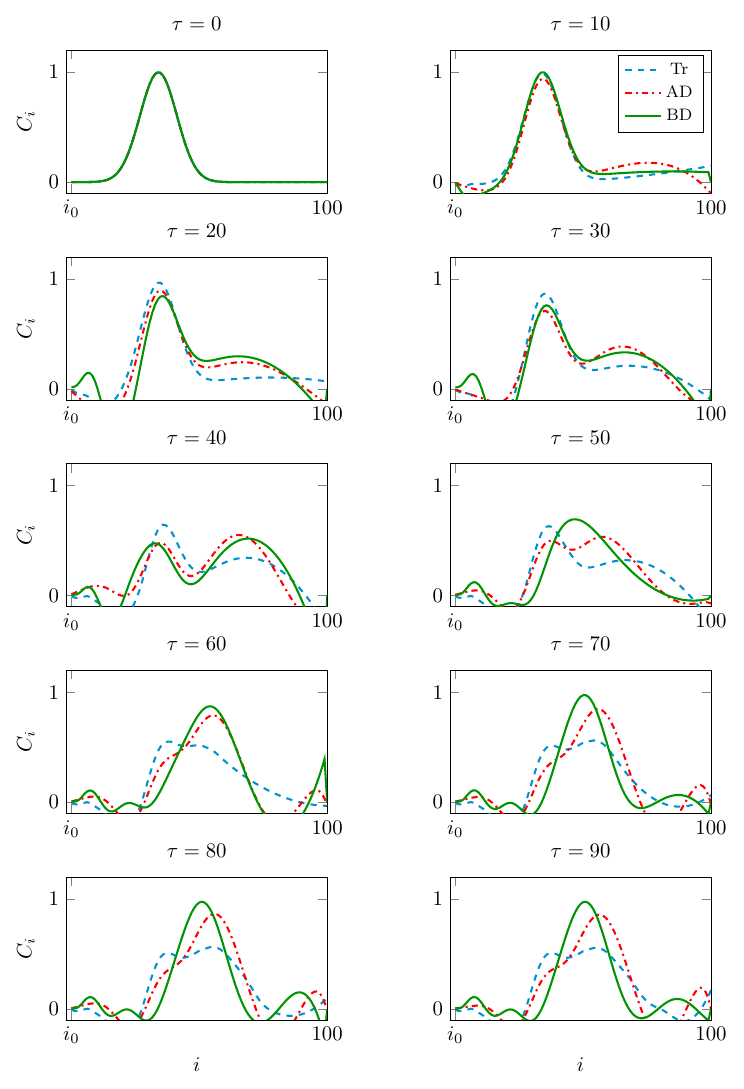}
	\caption{Snapshots of the reconstruction from a noisy second moment observation generated from the Becker-Döring model using the Becker-Döring model (BD, green), the Transport model (Tr, dashed-blue), or the Advection-Diffusion model (AD, dashed-red)}
	\label{fig:estimation-from-noisy-moment-frames}
\end{figure}

\section{Conclusion}

In this article, we addressed a discrete inverse problem, namely the state estimation of  the initial size distribution of polymers from  the observation of the time dynamics of a moment of the size distribution, by proposing an asymptotic approach. We have approximated the discrete inverse problem by two successive size-continuous models, a pure transport equation at first order, a drift-diffusion equation equipped with an absorbing boundary condition at second order. For each of these approximate inverse problems, we derived  model error and estimation error estimates, which highlighted a balance between accuracy of the approximate model and degree of ill-posedness of the inverse problem. Moreover, and this is also illustrated numerically,  the diffusive correction allows one to estimate the initial condition at any positive time, contrarily to the pure transport approximation where the time-window of observation needs to be large enough for the distribution of large sizes to be estimated. Ideally, this fact should appear explicitly in the estimation constants for the second order problem: we expect them to decay for large enough times, namely for $T\geq \f{L}{b},$ uniformly with respect to the diffusion correction factor $\eps.$ This question is linked to the question of the minimal time for uniform controllability of the drift-diffusion equation with vanishing diffusion, first formulated in~\cite{Coron:2005vd} and then further improved by several authors~\cite{amirat2019asymptotic,glass2010complex} and very recently in~\cite{laurent2023uniform}  for the case of a Dirichlet boundary condition.

\medskip

\bibliographystyle{plain}

\end{document}